\newtheorem{theorem}{Theorem}[section]
\newtheorem{lemma}[theorem]{Lemma}
\numberwithin{equation}{section}
\newtheorem{remark}[theorem]{Remark}
\numberwithin{equation}{section}
\begin{document}
	
\title[Concentration behavior of normalized ground states]
{Concentration behavior of normalized ground states for mass critical Kirchhoff equations in bounded domains}

\author{Shubin Yu}
\address{School of Mathematics and Statistics, Southwest University, Chongqing 400715, People's Republic of China.}
\email{yshubin168@163.com}

\author{Chen Yang}
\address{School of Mathematics and Statistics, Southwest University, Chongqing 400715, People's Republic of China.}
\email{yangchen6858@163.com}

\author{Chun-Lei Tang$^*$}
\address{School of Mathematics and Statistics, Southwest University, Chongqing 400715, People's Republic of China.}
\email{tangcl@swu.edu.cn}

\footnotetext[1]{Corresponding author.}
\footnotetext[2]{Project supported by the National Natural Science Foundation of China (No.12371120) and Southwest University graduate research innovation project (No. SWUB24031).}

\subjclass[2020]{35B40, 35J20, 35J60.}

\keywords{mass
critical Kirchhoff equations; boundary blow up; mass concentration; local uniqueness.}

\begin{abstract}
In present paper, we study the
limit behavior of normalized ground states for the following mass critical Kirchhoff equation
$$
\left\{\begin{array}{ll}
-(a+b\int_{\Omega}|\nabla u|^2\mathrm{d}x)\Delta u+V(x)u=\mu u+\beta^*|u|^{\frac{8}{3}}u  &\mbox{in}\ {\Omega}, \\[0.1cm]
 u=0&\mbox{on}\ {\partial\Omega}, \\[0.1cm]
\int_{\Omega}|u|^2\mathrm{d}x=1, \\[0.1cm]
\end{array}
\right.
$$
where $a\geq0$, $b>0$, the function $V(x)$ is a trapping potential in a bounded domain $\Omega\subset\mathbb R^3$, $\beta^*:=\frac{b}{2}|Q|_2^{\frac{8}{3}}$ and $Q$ is the unique positive radially symmetric solution of equation
$-2\Delta u+\frac{1}{3}u-|u|^{\frac{8}{3}}u=0.$
We consider the existence of constraint minimizers for the associated energy functional involving the parameter $a$. The  minimizer corresponds to the normalized ground state of above problem, and it exists if and only if $a>0$. Moreover, when $V(x)$ attains its flattest global minimum at an inner point or only at the boundary of $\Omega$, we analyze the fine limit profiles of the minimizers as $a\searrow 0$, including mass concentration at an inner point or near the boundary of $\Omega$. In particular, we further establish the local uniqueness of the minimizer if it is concentrated at a unique inner point.

\end{abstract}
\maketitle
\vspace {-1cm}
\section{Introduction}

In present paper, we consider the limit behavior of normalized ground states for the following mass critical Kirchhoff equation
\begin{equation}\label{eqn:this-article-mass-critical-Kirchhoff-equation}
\left\{\begin{array}{ll}
-(a+b\int_{\Omega}|\nabla u|^2\mathrm{d}x)\Delta u+V(x)u=\mu u+\beta^*|u|^{\frac{8}{3}}u  &\mbox{in}\ {\Omega}, \\[0.1cm]
 u=0&\mbox{on}\ {\partial\Omega}, \\[0.1cm]
\int_{\Omega}|u|^2\mathrm{d}x=1, \\[0.1cm]
\end{array}
\right.
\end{equation}
where $a\geq0$, $b>0$, $\Omega\subset\mathbb R^3$ is a bounded domain, $\mu$ is a unknown Lagrange multiplier and  $\beta^*>0$ is a constant that will be given later.
When $V=0$, it is well known that the equation \eqref{eqn:this-article-mass-critical-Kirchhoff-equation} is an analogy of steady-state classical Kirchhoff equation, which
 was proposed by Kirchhoff \cite{Kirchhoff-1883} as an extension of the classical D'Alembert's wave equations for free vibration of elastic strings and the model takes into account the changes in length of the string produced by transverse, see
 \cite{Arosio-1996, Cavalcanti-2001} for more related physical
backgrounds.

We mention that the problem \eqref{eqn:this-article-mass-critical-Kirchhoff-equation} is mainly inspired by the recent article \cite{Hu-Tang-2021}, where the authors considered the $L^2$-norm prescribed ground states (i.e. normalized ground states) of the mass critical Kirchhoff equation in $\mathbb R^N$ as follows
\begin{equation}\label{eqn:mass-critical-Kirchhoff-equation}
\left\{\begin{array}{ll}
-(a+b\int_{\mathbb R^N}|\nabla u|^2\mathrm{d}x)\Delta u+V(x)u=\mu u+u^{\frac{8}{N}+1}  &\mbox{in}\ {\mathbb R^N}, \\[0.1cm]
\int_{\mathbb R^N}|u|^2\mathrm{d}x=c^2, \\[0.1cm]
\end{array}
\right.
\end{equation}
where $a\geq0$, $b>0$, $N=1,2,3$ and the function $V$ is a trapping potential
satisfying the condition
 \begin{enumerate} \item [($V$)] $V(x)\in L_{loc}^\infty(\mathbb R^N)$, $\min_{x\in\mathbb R^N}V(x)=0$ and  $V(x)\rightarrow+\infty$ as $|x|\rightarrow+\infty$.
\end{enumerate}
The energy functional of \eqref{eqn:mass-critical-Kirchhoff-equation} is defined by
\begin{equation*}\label{eqn:Hu-functional}
E(u):=\int_{\mathbb{R}^{N}}(a|\nabla u|^2+V(x)u^2)\mathrm{d}x+\frac{b}{2}\left(\int_{\mathbb{R}^{N}}|\nabla u|^2\mathrm{d}x\right)^2-\frac{N}{N+4}
\int_{\mathbb{R}^{N}}|u|^{2+\frac{8}{N}}\mathrm{d}x.
\end{equation*}
Consider the constrained minimization problem
$$
e(a,c):=\inf_{\{u\in\mathcal H:\int_{\mathbb R^N}u^2\mathrm{d}x=c^2\}} E(u),
$$
where $\mathcal H:=\{u\in H^1(\mathbb R^N):\int_{\mathbb R^N}V(x)u^2\mathrm dx<\infty\}$.
As is well known, the minimizers will be the normalized ground states of problem \eqref{eqn:mass-critical-Kirchhoff-equation}.
If the set $\{x\in\mathbb R^N: V(x)=0\}\not=\emptyset$,
 Hu and Tang \cite{Hu-Tang-2021} determined a threshold $c=c_*=(\frac{b|Q|_2^{\frac{8}{N}}}{2})^{\frac{N}{8-2N}}$
such that there exists at least one minimizer for $e(0,c)$
 if $0<c<c_*$
and  there is no minimizer for $e(0,c)$ if $c\geq c_*$.
On the other hand, it has been shown in
\cite[Theorem 1.2]{Ye-2019} that for $a>0$, there exists at least one minimizer for
$e(a,c)$ if $0<c\leq c_*$. In particular,
$|Q|_2$ means the $L^2$-norm of $Q$ in $\mathbb R^N$ and
$Q$ is the unique (up to translations) positive radially symmetric
solution of the following scalar field equation
\begin{equation}\label{eqn:Q-unique-radial-N}
-2\Delta u+\frac{4-N}{N}u-u^{\frac{8}{N}+1}=0,\ u\in H^1(\mathbb{R}^{N}),
\end{equation}
see for example\cite{Kwong-1989}. Moreover, we recall from \cite{Weinstein-1983} that the following
Gagliardo-Nirenberg inequality
\begin{equation}\label{eqn:GN-type-inequality-R-N}
\int_{\mathbb R^N}|u|^{2+\frac{8}{N}}\mathrm{d}x\leq \frac{N+4}{N|Q|_2^{\frac{8}{N}}}\left(\int_{\mathbb R^N}|\nabla u|^2\mathrm{d}x\right)^2
\left(\int_{\mathbb R^N} |u|^2\mathrm{d}x\right)^{\frac{4}{N}-1},\ u\in H^1(\mathbb{R}^{N}),
\end{equation}
where equality is achieved for $u(x)=Q(x)$. This inequality implies that \eqref{eqn:mass-critical-Kirchhoff-equation} is so-called mass critical problem, which is also
 crucial for obtaining the optimal existence mentioned above.

Subsequently,
in view of the blow up analysis and optimal energy estimates, the refined limit profiles of
the  minimizer
 $u_a$ of $e(a, c)$ in the case of $c=c_*$ as $a\searrow0$ were established in \cite[Theorem 1.2]{Hu-Tang-2021} provided that
 the trapping potential $V$ has $n\geq1$ isolated
minima, and that in their vicinity $V$ behaves like a power of the distance from these points,
i.e.,
 \begin{itemize} \item [($\widetilde V$)] there are numbers $p_i>0$ and $C>0$ such that\begin{equation*}
V(x)=h(x)\prod_{i=1}^n|x-x_i|^{p_i}\ \mbox{with}\ C<h(x)<\frac{1}{C}\ \mbox{for\ all}\ x\in\mathbb R^N,
\end{equation*}
where $\lim_{x\rightarrow x_i}h(x)$ exists for all $1\leq i\leq n$.
\end{itemize}
More precisely, the minimizer
must concentrate at the flattest global minimum of $V$
 and is unique for $a>0$ sufficiently small. In
addition, it is necessary to mention that the
condition $(\widetilde V)$ was proposed by \cite{Guo-2014}, where the authors gave the
original
research of  mass concentration behavior. Indeed, they studied the two-dimensional Bose-Einstein condensates (BECs)  with attractive interactions, described by the Gross-Pitaevskii (GP) functional
\begin{equation*}\label{eqn:Hu-functional}
I_a(u):=\int_{\mathbb{R}^{2}}|\nabla u|^2+V(x)|u|^2\mathrm{d}x-\frac{a}{2}
\int_{\mathbb{R}^{2}}|u|^{4}\mathrm{d}x,
\end{equation*}
where $a>0$ describes the strength of the attractive interactions. Moreover, there exists a critical value $a^*$ such that the minimizers of this functional exist only
if the interaction strength $a$ satisfies $0<a<a^*$, which means that the existence of a critical particle
number for collapse of the BECs. Based on  this point, Guo and Seiringer \cite{Guo-2014}
expound the limit behavior of minimizers depending on the behavior of $V$ near its minima. Then they considered the trapping potential $V$ satisfying $(\widetilde V)$ and established the behavior of so-called GP minimizers close
to the critical coupling strength, see \cite[Theorem 2]{Guo-2014} and the detailed descriptions mentioned there.
In particular, the further local uniqueness and refined spike profiles of the minimizers are presented in \cite{Guo-2017-SIAM}.
For more related results of the BECs or GP functional, we refer the reader to
\cite{Luo-2019,Li-2021-JMP,Guo-2016-AIHP,Guo-2018-Nonlinearity}
and their references.
Returning to the Kirchhoff problem \eqref{eqn:mass-critical-Kirchhoff-equation}, there are still some interesting results regarding the limit behavior or concentration phenomena,
see \cite{Ye-2015} for the limit behavior of mountain pass type solutions as $c\searrow c_*$ for problem \eqref{eqn:mass-critical-Kirchhoff-equation} with $V(x)=0$, \cite{Ye-2019,Guo-CPAA-2018}
for the  concentration behavior of normalized ground states in the mass-subcritical sense,  \cite{Zhu-2021} for
problem \eqref{eqn:mass-critical-Kirchhoff-equation} with a mass-subcritical perturbation, and \cite{Hu-Lu-2023} for concentration and local uniqueness of normalized ground states as $c\nearrow c_*$ for problem \eqref{eqn:mass-critical-Kirchhoff-equation} with $a=a(x)\in C^1(\mathbb R^N)$ and $V(x)=0$.

Compared to the above arguments, we are concerned with the concentration behavior and  local uniqueness of normalized ground states for the mass critical Kirchhoff equation in bound domains, that is, our problem \eqref{eqn:this-article-mass-critical-Kirchhoff-equation}.
To the best of our knowledge, only the authors in \cite{Zhu-2023} considered problem \eqref{eqn:this-article-mass-critical-Kirchhoff-equation} with
$V(x)=0$ and a mass-subcritical perturbation. However, due to $V(x)=0$, it is not possible to analyze delicate behavior, including  mass concentration at an inner point or a boundary point, and thus one cannot get the local uniqueness by the argument in \cite{Guo-2017-SIAM}.
 To this end, we will consider the function $V(x)$ as a trapping potential in a bounded domain $\Omega$, where
$\Omega\subset \mathbb R^3$ has $C^1$ boundary and satisfies the interior ball condition in the sense that for all
 $x_0\in\partial\Omega$, there exists an open ball $B\subset\Omega$ such that $x_0\in\partial B\cap\partial\Omega$.
%Based on this, we will also establish the local uniqueness of the normalized ground state if it is concentrated at a unique inner point.
It can be seen that
problem \eqref{eqn:this-article-mass-critical-Kirchhoff-equation} corresponds to the bounded domain version of problem \eqref{eqn:mass-critical-Kirchhoff-equation} considered in \cite{Hu-Tang-2021} with $N=3$ (for $N<3$, the results in this paper are clearly valid). Note that  the mass $\int_{\Omega}|u|^2\mathrm dx=1$, then  it is natural to consider $$\beta^*:=\frac{b}{2}|Q|_2^{\frac{8}{3}},$$
where $Q$ is the unique positive
solution of equation \eqref{eqn:Q-unique-radial-N} with $N=3$, that is,
\begin{equation}\label{eqn:Q-unique-radial}
-2\Delta u+\frac{1}{3}u-|u|^{\frac{8}{3}}u=0.
\end{equation}
Associated to problem \eqref{eqn:this-article-mass-critical-Kirchhoff-equation}, the energy functional $E_a:H_0^1(\Omega)\rightarrow\mathbb R$ is of the form
$$
E_a(u):=a\int_{\Omega}|\nabla u|^2\mathrm{d}x+\frac{b}{2}\left(\int_\Omega|\nabla u|^2\mathrm{d}x\right)^2+\int_\Omega V(x)u^2\mathrm{d}x
-\frac{3\beta^*}{7}\int_{\Omega}|u|^{2+\frac{8}{3}}\mathrm{d}x,
$$
and then the minimization problem is as follows
$$
e(a):=\inf\limits_{\left\{u\in H_0^1(\Omega):\int_{\Omega}u^2\mathrm{d}x=1\right\}}E_a(u).
$$
First, we determine that $a^*=0$ is a threshold such that
the minimizer exists if and only if $a>0$.

\begin{theorem}\label{thm:existence-nonexistence}
Assume that $V(x)$ satisfies
\begin{itemize}
  \item [($V_1$)] $\min_{x\in\bar{\Omega}}V(x)=0$\ and\ $V(x)\in C^\alpha(\bar{\Omega})$\ for\ some\ $0<\alpha<1$.
\end{itemize}
Then
\begin{itemize}
  \item [($i$)] if $a>0$, there exists at least one positive minimizer for $e(a)$;
  \item [($ii$)] if $a=0$, there is no minimizer for $e(a)$.
\end{itemize}
Moreover, there holds $\lim_{a\searrow0}e(a)=0.$
\end{theorem}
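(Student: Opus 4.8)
The plan is to hang everything on the mass-critical Gagliardo--Nirenberg inequality \eqref{eqn:GN-type-inequality-R-N} with $N=3$. Extending any admissible $u\in H_0^1(\Omega)$ by zero to $\mathbb R^3$ and using $\int_\Omega u^2\mathrm{d}x=1$, inequality \eqref{eqn:GN-type-inequality-R-N} reads $\int_\Omega|u|^{2+\frac83}\mathrm{d}x\le\frac{7}{3|Q|_2^{8/3}}\big(\int_\Omega|\nabla u|^2\mathrm{d}x\big)^2$, and since $\frac{3\beta^*}{7}\cdot\frac{7}{3|Q|_2^{8/3}}=\frac{\beta^*}{|Q|_2^{8/3}}=\frac b2$, one gets
\[
E_a(u)\ \ge\ a\int_\Omega|\nabla u|^2\mathrm{d}x+\int_\Omega V(x)u^2\mathrm{d}x\ \ge\ 0
\]
for every such $u$ and every $a\ge0$; hence $e(a)\ge0$, while $e(a)<\infty$ by testing with a fixed $\varphi\in C_c^\infty(\Omega)$ normalized in $L^2$. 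This single estimate drives all three assertions.

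For part ($i$) I would run the direct method. Along a minimizing sequence $\{u_n\}$ for $e(a)$, the bound $E_a(u_n)\ge a\int_\Omega|\nabla u_n|^2\mathrm{d}x$ together with $\int_\Omega u_n^2\mathrm{d}x=1$ gives a uniform bound in $H_0^1(\Omega)$ --- \emph{and this is the only place where the hypothesis $a>0$ is used}. Passing to a subsequence, $u_n\rightharpoonup u_0$ in $H_0^1(\Omega)$ and, by the compact embeddings $H_0^1(\Omega)\hookrightarrow L^2(\Omega)$ and $H_0^1(\Omega)\hookrightarrow L^{2+\frac83}(\Omega)$ (note $2+\frac83<6$), $u_n\to u_0$ strongly in both; thus $\int_\Omega u_0^2\mathrm{d}x=1$, $\int_\Omega|u_0|^{2+\frac83}\mathrm{d}x=\lim_n\int_\Omega|u_n|^{2+\frac83}\mathrm{d}x$, and $\int_\Omega Vu_0^2\mathrm{d}x=\lim_n\int_\Omega Vu_n^2\mathrm{d}x$ since $V$ is bounded on $\bar\Omega$. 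Together with the weak lower semicontinuity of $u\mapsto\int_\Omega|\nabla u|^2\mathrm{d}x$ (and of its square) this yields $E_a(u_0)\le\liminf_n E_a(u_n)=e(a)$, so $u_0$ is a minimizer. Replacing $u_0$ by $|u_0|$ (which changes neither $E_a$ nor the constraint) gives a nonnegative minimizer, and elliptic regularity for the Euler--Lagrange equation $-(a+b\int_\Omega|\nabla u_0|^2)\Delta u_0+Vu_0=\mu u_0+\beta^*|u_0|^{8/3}u_0$ (uniformly elliptic as $a>0$) together with the strong maximum principle upgrades it to a positive one.

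The remaining statements rest on a single concentrating family. Fix $x_0\in\bar\Omega$ with $V(x_0)=0$ and set, for $\tau>0$,
\[
u_\tau(x):=\frac{A_\tau\,\tau^{3/2}}{|Q|_2}\,\varphi_\tau(x)\,Q\big(\tau(x-x_\tau)\big),
\]
where $A_\tau$ normalizes $\int_\Omega u_\tau^2\mathrm{d}x=1$; here $x_\tau:=x_0$ with $\varphi_\tau$ a fixed cutoff equal to $1$ near $x_0$ if $x_0\in\Omega$, whereas if $x_0\in\partial\Omega$ one uses the interior ball condition to choose $x_\tau\in\Omega$ with $|x_\tau-x_0|\to0$ and $\mathrm{dist}(x_\tau,\partial\Omega)\ge r_\tau$ for a radius $r_\tau\to0$ satisfying $\tau r_\tau\to\infty$ (for instance $r_\tau\sim\tau^{-1/2}$), and $\varphi_\tau$ supported in $B_{r_\tau}(x_\tau)\subset\Omega$. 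Using the scaling of $Q$ and its exponential decay, one checks $A_\tau\to1$, $\int_\Omega|\nabla u_\tau|^2\mathrm{d}x=\tau^2\frac{\int_{\mathbb R^3}|\nabla Q|^2}{|Q|_2^2}(1+o(1))$, and --- crucially --- $\frac b2\big(\int_\Omega|\nabla u_\tau|^2\mathrm{d}x\big)^2-\frac{3\beta^*}{7}\int_\Omega|u_\tau|^{2+\frac83}\mathrm{d}x\to0$, the two leading $\tau^4$-terms cancelling exactly by the equality case of \eqref{eqn:GN-type-inequality-R-N}; moreover $\int_\Omega Vu_\tau^2\mathrm{d}x\to V(x_0)=0$ by continuity of $V$ and the concentration of the mass $u_\tau^2$ at $x_0$. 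Hence $e(0)\le E_0(u_\tau)\to0$, which with $e(0)\ge0$ forces $e(0)=0$; and since $e(a)\ge e(0)=0$ (as $E_a\ge E_0$) while $e(a)\le E_0(u)+a\int_\Omega|\nabla u|^2\mathrm{d}x$ for every fixed admissible $u$, letting $a\searrow0$ gives $\lim_{a\searrow0}e(a)=0$.

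Finally, for part ($ii$), suppose $e(0)=0$ were attained by some $u_0$ with $\int_\Omega u_0^2\mathrm{d}x=1$. The chain above shows that $E_0(u_0)=0$ forces \emph{simultaneously} $\int_\Omega Vu_0^2\mathrm{d}x=0$ and equality in \eqref{eqn:GN-type-inequality-R-N} for the zero extension of $u_0$ to $\mathbb R^3$. But that equality is rigid: it can hold only for $u_0(x)=\alpha\,Q(\beta(x-y))$ with $\alpha\ne0$, $\beta>0$, $y\in\mathbb R^3$, and such a function is nonzero almost everywhere on $\mathbb R^3$, contradicting $u_0\equiv0$ on the positive-measure set $\mathbb R^3\setminus\Omega$. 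Hence $e(0)$ has no minimizer. I expect the genuinely delicate point to be the boundary case of the concentrating family --- tuning $x_\tau$ and $r_\tau$ so that $u_\tau\in H_0^1(\Omega)$ while all cutoff and potential error terms still vanish --- together with the use of the rigidity of the equality case in \eqref{eqn:GN-type-inequality-R-N}; part ($i$) is comparatively routine once the coercivity estimate is in hand.
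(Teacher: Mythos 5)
Your proposal is correct and follows essentially the same route as the paper: coercivity from the mass-critical Gagliardo--Nirenberg inequality, the direct method for ($i$), concentrating trial functions (with the interior-ball construction when $V$ vanishes only on $\partial\Omega$) to get $e(0)=0$, and non-attainment of the best Gagliardo--Nirenberg constant on a bounded domain to rule out a minimizer for $e(0)$. The only cosmetic difference is that you prove the non-attainment directly from the rigidity of the equality case (extremals are translates/dilates of $Q$, hence nowhere zero), whereas the paper cites this fact from Noris--Tavares--Verzini; your choice $r_\tau\sim\tau^{-1/2}$ versus the paper's $R_\tau=C\ln\tau/\tau$ is likewise an equivalent tuning of the same construction.
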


Theorem \ref{thm:existence-nonexistence} provides a general result that is independent of the shape of trapping potential $V(x)$. Moreover, this result strictly depends on the following
Gagliardo-Nirenberg inequality in bounded domains
\begin{equation}\label{eqn:GN-type-inequality}
\int_{\Omega}|u|^{2+\frac{8}{3}}\mathrm{d}x\leq \frac{7}{{3|Q|_2^{\frac{8}{3}}}}\left(\int_\Omega|\nabla u|^2\mathrm{d}x\right)^2
\left(\int_\Omega u^2\mathrm{d}x\right)^{\frac{1}{3}},\ u\in H^1_0(\Omega).
\end{equation}
The best constant in \eqref{eqn:GN-type-inequality} is the same as in \eqref{eqn:GN-type-inequality-R-N}, but it cannot be achieved in the bounded domain $\Omega$, see \cite[Appendix A]{Noris-2014}. This fact directly implies the nonexistence of minimizers, i.e., Theorem \ref{thm:existence-nonexistence}-($ii$), if we can obtain that $e(0)=0$. However, in view of ($V_1$), $V(x)$ may vanish only on $\partial\Omega$, then the usual strategy cannot get $e(0)=0$.
In this sense,  the interior ball condition of $\Omega$ seems to be necessary, see Section \ref{sec:existence-nonexistence} for the details.

In the following,
we consider the same trapping potential $V$ as \cite{Hu-Tang-2021}, i.e., $(\widetilde V)$ holds. Naturally, the $x\in\mathbb R^N$ in condition $(\widetilde V)$ should be replaced with $x\in\bar\Omega$.
Namely,
\begin{equation}\label{eqn:V-potential-x-x-i}
V(x)=h(x)\prod_{i=1}^n|x-x_i|^{p_i}\ \mbox{with}\ C<h(x)<\frac{1}{C}\ \mbox{for\ all}\ x\in\bar\Omega,
\end{equation}
where $\lim_{x\rightarrow x_i}h(x)$ also exists for all $1\leq i\leq n$. We expect the minimizers of $e(a)$ to concentrate at the flattest minimum of $V$ as $a\searrow0$ and provide the first contribution to the fine limit profiles of normalized ground states for
mass critical nonlocal Kirchhoff problem in bounded domains.

  Set
\begin{equation}\label{eqn:p-definition}
p=\max\{p_1,p_2,\cdots,p_n\},
\end{equation}

\begin{equation}\label{eqn:k-i-lambda-idefinition}
\kappa_i:=\lim_{x\rightarrow x_i}\frac{V(x)}{|x-x_i|^p},\ \lambda_i:=\left(\frac{p\kappa_i}{2|Q|_2^2}
\int_{\mathbb{R}^{3}}|x|^pQ^2(x)\mathrm{d}x\right)^{\frac{1}{p+2}},
\end{equation}

\begin{equation}\label{eqn:k-lambda-definition}
\kappa=\min\{\kappa_1,\kappa_2,\cdots,\kappa_n\},\ \lambda=\min\{\lambda_i:x_i\in\Omega,1\leq i\leq n\}
\end{equation}
and
$$
 Z_1:=\{x_i\in\Omega:p_i=p\},\  Z_0:=\{x_i\in\partial\Omega:p_i=p\},
$$
where the  latter two sets denote the locations of the flattest global minima of $V(x)$.
It turns out that only the two cases need to be considered, that is, $ Z_1\neq\emptyset$ or $ Z_1=\emptyset$.
We also point out that this observation was first discovered by \cite{Guo-Luo-Zhang-2018}, where the authors established the refined limit behaviors of minimizers for mass critical Hartree energy functionals in bounded domains.
They cleverly utilized a Gagliardo-Nirenberg type inequality with a remainder to establish a exponential estimate and then
exclude the possibility that the mass of each minimizer is close
to the boundary of $\Omega$ if $ Z_1\neq\emptyset$, or close
to a boundary point that is not in $ Z_0$ if $ Z_1=\emptyset$.
Note that for pure power nonlinearities, this inequality has been given in \cite[Theroem 5.1]{Carlen-2014}, see also Lemma \ref{lem:GFA-Theorem-5.1} below.
Then we can also get a crucial exponential estimate provided that the maximum point $z_a$ of
the minimizer $u_a$  in $\Omega$ satisfies $z_a\rightarrow x_i$ as $a\searrow 0$ for some $x_i\in\partial\Omega$, see Lemma \ref{lem:similar-to-Guo-Prop-3.1}. By this estimate, we can determine that
the mass of minimizers for $e(a)$ as $a\searrow0$ must concentrate at an inner point located in $ Z_1$ if $ Z_1\neq\emptyset$ or at a boundary point located in $ Z_0$ if $ Z_1=\emptyset$. Moreover, due to the nonlocal characteristics of Kirchhoff type problems, some more refined analyses are necessary.
In addition, we remark that the techniques given in \cite{Luo-2019} (where the authors studied the concentration behavior of BECs in bounded domains but only considered $ Z_1\neq\emptyset$) can also be used to obtain the first one,  which does not rely on the Gagliardo-Nirenberg inequality with remainder.

Now, we present the result regarding the case of $ Z_1\neq\emptyset$. As we will see, this result
 provides a detailed description of the limit behavior and explicit rate for the minimizers of $e(a)$
as $a\searrow0$, which also implies that the mass of minimizers  must concentrate at an inner point $x_0\in Z_1$.

\begin{theorem}\label{thm:mass-concentration-inner-point}
Assume that $V(x)$ satisfies \eqref{eqn:V-potential-x-x-i} and $ Z_1\neq\emptyset$. Let $u_a$ be a positive minimizer of $e(a)$ for $a>0$. For any sequence $\{a_k\}$ satisfying $a_k\searrow0$ as $k\rightarrow\infty$, there exists a subsequence, still denoted by $\{a_k\}$, such that each $u_{a_k}$ has a unique maximum point $z_{a_k}$ satisfying $\lim_{k\rightarrow\infty}z_{a_k}=x_0\in Z_1$. In particular,
$$
\lim_{k\rightarrow\infty}\frac{|z_{a_k}-x_0|}
{a_k^{\frac{1}{p+2}}}=0
$$
and
$$
\lambda^{-\frac{3}{2}}a_k^{\frac{3}{2(p+2)}}
u_{a_k}({\lambda^{-1}}{a_k^{\frac{1}{p+2}}}
x+z_{a_k})\rightarrow\frac{Q(|x|)}
{|Q|_2}
\ \mbox{in}\ H^1(\mathbb R^3)\cap L^\infty(\mathbb R^3)\ \mbox{as}\ k\rightarrow\infty,
$$
where $Q(x)$ is the unique positive radially symmetric solution of \eqref{eqn:Q-unique-radial}.
\end{theorem}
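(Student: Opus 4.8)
The plan is to follow the now-standard blow-up scheme for mass-critical constrained minimization, adapted to the nonlocal Kirchhoff setting and the bounded domain. First I would establish the sharp energy asymptotics: using the trial function $u_a(x)=\frac{A_\tau}{|Q|_2}\tau^{3/2}Q(\tau(x-x_0))\varphi(x-x_0)$ for a cut-off $\varphi$, centered at a point $x_0\in Z_1$ realizing the flattest inner minimum, and optimizing over $\tau\sim a^{-1/(p+2)}$, one gets the upper bound $e(a)\le \mathcal{A}\,a^{p/(p+2)}(1+o(1))$ for an explicit constant $\mathcal{A}=\mathcal{A}(p,\kappa,\lambda,Q)$; the Kirchhoff term $\tfrac{b}{2}(\int|\nabla u_a|^2)^2$ contributes at the same order as the critical nonlinearity, which is why $\beta^*=\tfrac{b}{2}|Q|_2^{8/3}$ is the right normalization. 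For the matching lower bound one uses the Gagliardo–Nirenberg inequality \eqref{eqn:GN-type-inequality} to write, for any admissible $u$,
\[
E_a(u)\ge a\!\int_\Omega|\nabla u|^2 +\Bigl(\tfrac{b}{2}-\tfrac{3\beta^*}{7}\cdot\tfrac{7}{3|Q|_2^{8/3}}\Bigr)\Bigl(\int_\Omega|\nabla u|^2\Bigr)^2+\int_\Omega V(x)u^2,
\]
where the bracket vanishes identically, leaving $E_a(u)\ge a\int_\Omega|\nabla u|^2+\int_\Omega V u^2$; then a Young-type balance of these two nonnegative terms against the behavior $V(x)\sim\kappa_i|x-x_i|^p$ gives the lower bound of the same order. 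This simultaneously forces, for a minimizing sequence, that $\int_\Omega|\nabla u_a|^2\sim a^{-2/(p+2)}$ (blow-up rate) and that the mass localizes near $\mathcal Z=Z_1\cup Z_0$.

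Next I would perform the rescaling. Let $z_a$ be a maximum point of $u_a$ (existence follows from elliptic regularity: $u_a\in C^1(\overline\Omega)$ by $(V_1)$ and standard bootstrap, using $a>0$ so the equation is uniformly elliptic), set $\varepsilon_a:=a^{1/(p+2)}/\lambda$ and define $w_a(x):=\varepsilon_a^{3/2}u_a(\varepsilon_a x+z_a)$ on the rescaled domain $\Omega_a:=(\Omega-z_a)/\varepsilon_a$. The $w_a$ satisfy a rescaled Euler–Lagrange equation; the energy bound gives $\|w_a\|_{H^1}$ bounded and $\int_{\Omega_a} w_a^2=1$, and the absence of concentration-compactness vanishing follows because $\int_\Omega|u_a|^{2+8/3}$ stays bounded away from $0$ (otherwise $E_a(u_a)$ would be too large). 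So $w_a\rightharpoonup w_0\ge0$ weakly in $H^1(\mathbb R^3)$ with $w_0\not\equiv0$, after extending by zero. One then identifies the limiting profile: the Lagrange multiplier $\mu_a$ rescales so that $w_0$ solves $-(\text{const})\Delta w_0 +(\text{const})w_0=(\text{const})|w_0|^{8/3}w_0$ on all of $\mathbb R^3$ (the boundary recedes because $z_a$ stays uniformly away from $\partial\Omega$ when $Z_1\ne\emptyset$ — this is exactly where Lemma \ref{lem:similar-to-Guo-Prop-3.1} or the Luo-type exponential estimate is invoked to rule out $z_a\to\partial\Omega$); matching the constants via the energy identity and the Pohozaev/Nehari relations forces $w_0=Q(\cdot-y_0)/|Q|_2$ for some $y_0$, and a refined energy expansion (the $o(1)$ in the upper bound must be saturated) pins down $y_0=0$, upgrades weak convergence to strong in $H^1$, and, by elliptic estimates on the rescaled equation, to $L^\infty$ as well. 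Strong $H^1$ convergence together with $w_0$ having a unique nondegenerate maximum then yields that $z_{a_k}$ is the \emph{unique} maximum point of $u_{a_k}$ for $k$ large.

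Finally, the location statements. That $\lim z_{a_k}=x_0\in Z_1$: mass concentration near $\mathcal Z$ plus the exponential decay estimate excluding $Z_0$ when $Z_1\ne\emptyset$ leaves only inner points; and the energy must be minimized over the choice of concentration point, which (through $\kappa=\min\kappa_i$ and $\lambda=\min\lambda_i$ and the explicit dependence of $\mathcal A$ on these) selects a point of $Z_1$. For the sharper rate $|z_{a_k}-x_0|=o(a_k^{1/(p+2)})$, i.e. $|z_{a_k}-x_0|/\varepsilon_{a_k}\to0$: expand $\int_\Omega V(x)u_{a_k}^2\mathrm dx$ in the rescaled variables as $\varepsilon_{a_k}^p\!\int_{\Omega_{a_k}} V(\varepsilon_{a_k}x+z_{a_k})\varepsilon_{a_k}^{-p}w_{a_k}^2$; by $(\widetilde V)$ this behaves like $\kappa_i\varepsilon_{a_k}^p\int |x+(z_{a_k}-x_0)/\varepsilon_{a_k}|^p |Q|^2/|Q|_2^2$, and comparing with the optimal value $\mathcal A a_k^{p/(p+2)}$ — which corresponds to the center of mass at the origin because $\int |x|^p Q^2$ is minimized at $0$ by symmetry of $Q$ — forces the shift $(z_{a_k}-x_0)/\varepsilon_{a_k}$ to $0$.

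The main obstacle I expect is the nonlocal Kirchhoff term in the limiting equation and the energy expansion: unlike the local GP case, $\int_\Omega|\nabla u_a|^2$ appears squared, so after rescaling the coefficient of $-\Delta w_a$ is $a+b\int|\nabla u_a|^2=a+b\,\varepsilon_a^{-2}\|\nabla w_a\|_2^2$, and one must track its limit carefully — showing $b\,\varepsilon_a^{-2}\|\nabla w_a\|_2^2\cdot\varepsilon_a^2=b\|\nabla w_a\|_2^2\to b\|\nabla(Q/|Q|_2)\|_2^2$ and that this, together with the rescaled $a$-term which vanishes, produces exactly the coefficient $2$ in equation \eqref{eqn:Q-unique-radial}. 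Making the energy expansions precise enough to get strong convergence and the $o(a_k^{1/(p+2)})$ rate, while keeping the nonlocal term under control, is the technical heart of the argument.
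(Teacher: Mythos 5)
Your proposal is correct and follows essentially the same route as the paper: a trial-function upper bound $e(a)\le\frac{p+2}{p}\lambda^2a^{p/(p+2)}$, the Gagliardo--Nirenberg inequality to reduce the lower bound to $a\int_\Omega|\nabla u|^2+\int_\Omega Vu^2$, blow-up rescaling with identification of the profile as $Q/|Q|_2$ via uniqueness of $Q$, the exponential (remainder-term) estimate of Lemma \ref{lem:similar-to-Guo-Prop-3.1} to exclude boundary concentration when $Z_1\neq\emptyset$, and Fatou plus energy matching to force $y_0=0$ and the rate. The only cosmetic difference is that you fix the blow-up scale $\varepsilon_a=a^{1/(p+2)}/\lambda$ a priori, whereas the paper rescales by $\left(\int_\Omega|\nabla u_a|^2\,\mathrm{d}x\right)^{-1/2}$ and only obtains $\varepsilon_{a_k}\sim\lambda^{-1}a_k^{1/(p+2)}$ at the very end from the energy matching --- the latter ordering avoids having to justify the gradient growth rate before the limiting profile is identified.
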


% In addition, we should remark that
% if $\Omega=\mathbb R^N$, the local uniqueness and refined spike profiles can be obtained, see \cite{Hu-Tang-2021,Guo-2017-SIAM}. If $\Omega$ is a bounded domain, whether such properties can be established is still an open problem.

Subsequently, we consider the case of $ Z_1=\emptyset$, which implies $ Z_0\neq\emptyset$.
In this case, one can see that the method used in Theorem \ref{thm:mass-concentration-inner-point} cannot obtain the prime upper energy estimate as $a\searrow0$.
Inspired by \cite{Guo-Luo-Zhang-2018}, a delicate test
function and the more complicated analysis will be used to overcome this difficulty. In addition,
we point out that the exponential estimate in Lemma \ref{lem:similar-to-Guo-Prop-3.1} will play
a crucial role in establishing precise estimate of Lemma \ref{lem:z-a-k-x-i-ln-varepsilon-upper-lower}. Based on this
precise estimate, we can ensure the availability of the priori limit behavior in Lemma \ref{lem:w-a-k-Q-2-converge-behavior} and obtain the desired lower energy estimate of
$e(a)$. In summary, the corresponding result will be presented below, which implies that
 the mass of minimizers for $e(a)$ must concentrate near the boundary of $\Omega$ as $a\searrow0$ provided that $Z_1=\emptyset$.

\begin{theorem}\label{thm:mass-concentration-boundary}
Suppose that $V(x)$ satisfies \eqref{eqn:V-potential-x-x-i} and $ Z_1=\emptyset$. Let $u_a$ be a positive minimizer of $e(a)$ for $a>0$. For any sequence $\{a_k\}$ satisfying $a_k\searrow0$ as $k\rightarrow\infty$, there exists a subsequence, still denoted by $\{a_k\}$, such that each $u_{a_k}$ satisfies
$$
\lim\limits_{k\rightarrow\infty}\varepsilon_{a_k}^{\frac{3}{2}}u_{a_k}(\varepsilon_{a_k}x+z_{a_k})=\frac{Q(|x|)}{|Q|_2}
\ \mbox{in}\ H^1(\mathbb R^3)\cap L^\infty(\mathbb R^3),
$$
where $z_{a_k}$ is a unique maximum of $u_{a_k}$ satisfying $\lim_{k\rightarrow\infty}z_{a_k}=x_0\in Z_0$ and
$$
|z_{a_k}-x_0|\approx\frac{p+4}{2}\varepsilon_{a_k}|\ln\varepsilon_{a_k}|\ \mbox{as}\ k\rightarrow\infty,
$$
and $\varepsilon_{a_k}>0$ satisfying
$$
\varepsilon_{a_k}\approx\Big(\frac{2^{p+1}}{p\kappa(p+4)^p}\Big)
^{\frac{1}{p+2}}(p+2)^{\frac{p}{p+2}}a_k^{\frac{1}{p+2}}
\Big(\ln\frac{1}{a_k}\Big)^{-\frac{p}{p+2}}\ \mbox{as}\ k\rightarrow\infty.
$$
\end{theorem}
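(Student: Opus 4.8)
\emph{Strategy.} The plan is to pin down the sharp leading asymptotics of $e(a)$ as $a\searrow0$ by a matching pair of upper and lower energy estimates, and then to read off the limit profile, the scale $\varepsilon_{a}$, and the concentration point from a blow-up analysis of the minimizer. I use freely that $e(a)\to0$ and that positive minimizers exist for $a>0$ (Theorem \ref{thm:existence-nonexistence}), and that, since $\beta^{*}=\frac b2|Q|_2^{8/3}$, the Gagliardo--Nirenberg inequality \eqref{eqn:GN-type-inequality} makes the decomposition $E_{a}(u)=a\int_{\Omega}|\nabla u|^{2}+\mathcal D(u)+\int_{\Omega}Vu^{2}$ hold with $\mathcal D(u):=\frac b2(\int_{\Omega}|\nabla u|^{2})^{2}-\frac{3\beta^{*}}{7}\int_{\Omega}|u|^{2+8/3}\ge0$, equality only for rescalings of the $Q$-profile; hence $E_{a}(u)\ge a\int_{\Omega}|\nabla u|^{2}+\int_{\Omega}Vu^{2}\ge0$. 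All steps below are carried out along a fixed sequence $a_{k}\searrow0$; write $u_{k}=u_{a_{k}}$, $\varepsilon_{k}=\varepsilon_{a_{k}}$, $z_{k}=z_{a_{k}}$.

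\emph{Step 1 (sharp upper estimate).} Since $Z_{1}=\emptyset$, fix $x_{0}=x_{i_{0}}\in Z_{0}$ with $\kappa_{i_{0}}=\kappa$. By the interior ball condition choose $B_{\rho}(y^{*})\subset\Omega$ with $x_{0}\in\partial B_{\rho}(y^{*})$, let $\nu=(y^{*}-x_{0})/\rho$ be the inner unit normal, and note $\operatorname{dist}(x_{0}+d\nu,\partial\Omega)=d$ for $0<d<\rho$. For parameters $\tau\to\infty$ and $d=d(\tau)\to0^{+}$, place the center at $y_{\tau}=x_{0}+d\nu$ and set
\[
\bar u_{\tau}(x)=A_{\tau}\,\varphi(x)\,\tau^{3/2}\,\frac{Q\big(\tau(x-y_{\tau})\big)}{|Q|_{2}},
\]
with $\varphi\in C_{c}^{\infty}(B_{\rho}(y^{*}))$ a fixed cut-off equal to $1$ near $y^{*}$ and $A_{\tau}>0$ chosen so that $\int_{\Omega}\bar u_{\tau}^{2}=1$. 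Because $|\tau(x-y_{\tau})|\ge\tau d$ on $\partial\Omega$ and $Q$ decays exponentially, all truncation errors are exponentially small in $\tau d$: one gets $A_{\tau}\to1$ with exponentially small correction, a near-cancellation of $\mathcal D(\bar u_{\tau})$ up to the rescaled mass deficit, and $\int_{\Omega}V\bar u_{\tau}^{2}=\kappa d^{p}(1+o(1))$ from \eqref{eqn:V-potential-x-x-i} (using $\tau d\to\infty$ at the optimal choice). Thus
\[
E_{a}(\bar u_{\tau})\le a\,\tau^{2}\,\frac{\int_{\mathbb R^{3}}|\nabla Q|^{2}}{|Q|_{2}^{2}}+\kappa\,d^{p}+(\text{boundary cost}),
\]
the boundary cost being of order $\tau^{\gamma}e^{-c\tau d}$ for constants $c,\gamma>0$ determined by the exponential decay of $Q$ and the scaling of $E_{a}$. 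Optimizing first in $d$ balances $\kappa p\,d^{p-1}$ against the derivative of the boundary cost and forces $d\sim\frac{p+4}{2}\tau^{-1}|\ln\tau|$; optimizing then in $\tau$ yields $\tau^{-1}=\varepsilon_{a}$ of the order stated in the theorem and the corresponding upper bound for $e(a)$.

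\emph{Step 2 (blow-up, localization, lower estimate).} From $E_{a}(u_{k})=e(a_{k})\to0$ and the decomposition, $a_{k}\int|\nabla u_{k}|^{2}\to0$, $\int Vu_{k}^{2}\to0$, and $u_{k}$ optimizes \eqref{eqn:GN-type-inequality} in the limit, which forces $\int_{\Omega}|\nabla u_{k}|^{2}\to\infty$; with $\varepsilon_{k}=(\int_{\Omega}|\nabla u_{k}|^{2})^{-1/2}$ and $z_{k}$ a (necessarily interior) maximum point of $u_{k}$, the rescalings $w_{k}(x)=\varepsilon_{k}^{3/2}u_{k}(\varepsilon_{k}x+z_{k})$ satisfy $w_{k}\rightharpoonup Q/|Q|_{2}$ in $H^{1}(\mathbb R^{3})$ along a subsequence — in fact strongly, no mass being lost by GN-optimality — and then in $C^{2}_{loc}\cap L^{\infty}(\mathbb R^{3})$ by the rescaled Euler--Lagrange equation and elliptic regularity, whence the maximum point is unique for $k$ large and $w_{k}$ obeys a uniform exponential bound (Lemmas \ref{lem:similar-to-Guo-Prop-3.1}, \ref{lem:w-a-k-Q-2-converge-behavior}). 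Up to a further subsequence $z_{k}\to x_{0}\in\bar\Omega$; if $x_{0}$ were interior, or a boundary point outside $Z_{0}$, then near $x_{0}$ either $V$ is bounded below or $V(x)\sim\kappa_{i}|x-x_{i}|^{p_{i}}$ with $p_{i}<p$, and in each case a lower bound on $\int Vu_{k}^{2}$ — for a boundary point weighed simultaneously against the mass shed near $\partial\Omega$, controlled via Lemma \ref{lem:similar-to-Guo-Prop-3.1} — contradicts Step 1; hence $x_{0}\in Z_{0}$. Writing $d_{k}=|z_{k}-x_{0}|$ and using the interior-ball geometry so that $d_{k}\asymp\operatorname{dist}(z_{k},\partial\Omega)$, the precise two-sided bound $d_{k}\approx\frac{p+4}{2}\varepsilon_{k}|\ln\varepsilon_{k}|$ of Lemma \ref{lem:z-a-k-x-i-ln-varepsilon-upper-lower} follows by running the Step 1 balancing on $u_{k}$ directly — the upper bound since $\int Vu_{k}^{2}=\kappa d_{k}^{p}(1+o(1))\le e(a_{k})$ is controlled by Step 1, the lower bound since a boundary contribution $\asymp\varepsilon_{k}^{-\gamma}e^{-cd_{k}/\varepsilon_{k}}$ to $E_{a}(u_{k})$ would, for $d_{k}$ too small, exceed it — where Lemma \ref{lem:similar-to-Guo-Prop-3.1} and the Gagliardo--Nirenberg inequality with remainder (Lemma \ref{lem:GFA-Theorem-5.1}) enter decisively. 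Inserting $d_{k}\approx\frac{p+4}{2}\varepsilon_{k}|\ln\varepsilon_{k}|$ back into $E_{a}(u_{k})=e(a_{k})$ yields the lower bound matching Step 1 and pins down $\varepsilon_{k}\approx(\frac{2^{p+1}}{p\kappa(p+4)^{p}})^{1/(p+2)}(p+2)^{p/(p+2)}a_{k}^{1/(p+2)}(\ln\frac1{a_{k}})^{-p/(p+2)}$; re-expressing the convergence of $w_{k}$ through this $\varepsilon_{k}$ then gives all the remaining assertions of the theorem.

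\emph{Main obstacle.} Everything through the identification of the profile $Q/|Q|_{2}$ and the localization $z_{k}\to x_{0}\in Z_{0}$ is, modulo the nonlocal term, parallel to the Gross--Pitaevskii (\cite{Guo-2014,Guo-2017-SIAM}) and Hartree (\cite{Guo-Luo-Zhang-2018}) precedents. The crux is the constant-sharp form of Lemma \ref{lem:z-a-k-x-i-ln-varepsilon-upper-lower}: one must quantify \emph{exactly} the energy cost of the Dirichlet boundary — its exponential rate and polynomial prefactor in $d_{k}/\varepsilon_{k}$ — which requires (i) the sharp decay of $Q$ at infinity, (ii) the uniform exponential estimate of Lemma \ref{lem:similar-to-Guo-Prop-3.1} (itself coming from the Gagliardo--Nirenberg inequality with remainder, Lemma \ref{lem:GFA-Theorem-5.1}), and (iii) exact bookkeeping of the nonlocal Kirchhoff term $\frac b2(\int_{\Omega}|\nabla u|^{2})^{2}$ under the anisotropic rescaling, which has no analogue in those earlier settings. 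Getting the constant $\frac{p+4}{2}$ in Lemma \ref{lem:z-a-k-x-i-ln-varepsilon-upper-lower}, and the displayed constant in $\varepsilon_{a_{k}}$, to come out is exactly this delicate last step.
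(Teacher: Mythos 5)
Your proposal follows essentially the same route as the paper: a boundary test function whose distance $d$ from $\partial\Omega$ is tuned to $\tfrac{p+4}{2}\tau^{-1}\ln\tau$ for the sharp upper bound (the paper's Lemma \ref{lem:boundary-e-a-upper-bounded-eatimate} with $g(\tau)=\tau R_\tau=\tfrac{p+4}{2}\ln\tau$), the Gagliardo--Nirenberg inequality with remainder feeding the exponential deficit $\varepsilon_k^{-4}e^{-2(1+o(1))d_k/\varepsilon_k}$ in the Kirchhoff term (Lemmas \ref{lem:GFA-Theorem-5.1} and \ref{lem:similar-to-Guo-Prop-3.1}), the two-sided estimate $d_k\approx\tfrac{p+4}{2}\varepsilon_k|\ln\varepsilon_k|$ (Lemma \ref{lem:z-a-k-x-i-ln-varepsilon-upper-lower}), and the matching lower energy bound that pins down $\beta_0=\tfrac{p+4}{2}$ and the stated $\varepsilon_{a_k}$ asymptotics. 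The identification of the mechanism and of all the constants is correct.

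One concrete step would fail as written: in Step 1 you take a \emph{fixed} cut-off $\varphi\in C_c^{\infty}(B_\rho(y^*))$ equal to $1$ near $y^*$, while the bump $Q(\tau(x-y_\tau))$ is centered at $y_\tau=x_0+d\nu\to x_0\in\partial B_\rho(y^*)$; any such fixed $\varphi$ vanishes in a neighborhood of $x_0$, so your trial function is essentially zero where the mass sits and cannot be normalized with $A_\tau\to1$. The cut-off must be $\tau$-dependent and live at the shrinking scale $d$ around $y_\tau$ itself (the paper uses $\varphi_\tau(\tfrac{x-x_\tau}{R_\tau})$ with transition layer of relative width $\eta(\tau)\to0$), and one must then verify that the resulting gradient terms of size $O(1/(\eta(\tau)R_\tau))$ are still killed by the exponential decay of $Q$ at radius $\tau R_\tau$ — this is precisely the bookkeeping of $I_\tau^1,I_\tau^3$ in Lemma \ref{lem:boundary-e-a-upper-bounded-eatimate}. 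With that repair your argument coincides with the paper's.
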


Finally, we are devote to giving the local uniqueness of the minimizer $u_a$ as $a\searrow0$. As shown in \cite{Guo-2017-SIAM,Hu-Tang-2021}, we know that the local uniqueness depends on the behavior of $u_a$, especially the rate at which its maximum point $z_{a_k}$ converges to the concentration point $x_0$. Indeed, if
$$
\frac{|z_{a_k}-x_0|}
{\varepsilon_{a_k}}\rightarrow\infty\ \mbox{as}\ k\rightarrow\infty,
$$
one cannot obtain the local uniqueness using known methods.
As stated in Theorem \ref{thm:mass-concentration-boundary}, it will occur if the concentration point is at the boundary.
Therefore, it seems that we can only consider the case where the concentration point is located within the domain $\Omega$ based on the Theorem \ref{thm:mass-concentration-inner-point}.
From this perspective, we assume that $Z_1=\{x_1\}$ and $p>1$, which
 mean that $Z_1$ contains only one element and $p_1=p>1$.
Define $\varepsilon_k:={\lambda^{-1}}{a_k^{\frac{1}{p+2}}}$, then Theorem \ref{thm:mass-concentration-inner-point} implies that
 $$
\frac{z_{a_k}-x_1}
{\varepsilon_k}\rightarrow 0\ \mbox{as}\ k\rightarrow\infty.
$$
Meanwhile, we point out that $p>1$ is used to ensure the crucial nondegenerate property holds. Namely,
the unique critical point $0$ of $H(y)=\int_{\mathbb R^3}|x+y|^p Q^2(x)\mathrm dx$ is nondegenerate in the sense that
\begin{equation}\label{eqn:nondegenerate}
{\rm det}\left( \frac{\partial^2H(0)}{\partial x_j\partial x_i}\right)\neq0,\ i,j=1,2,3.
\end{equation}
Thus, if  $V(x)$ satisfies \eqref{eqn:V-potential-x-x-i} with $p_1=p>1$ and $ Z_1=\{x_1\}$,
we can obtain the local uniqueness following the scheme of \cite{Guo-2017-SIAM}.
On  the other hand, note that our problem is being discussed in bounded domains.  It is worth mentioning that the authors in \cite{Li-2021-JMP} proved local uniqueness of ground states for BECs in box-shaped traps, which also inspired us.
Certainly, due to
the presence of the nonlocal term and $N=3$,
more careful analysis in the procedure is needed. We refer the reader to Section \ref{sec:Local uniqueness} for more details. Now we provide the local uniqueness result.
\begin{theorem}\label{thm:local-uniqueness}
Assume that $V(x)$ satisfies \eqref{eqn:V-potential-x-x-i} with $p_1=p>1$ and $ Z_1=\{x_1\}$, then
 the minimizer $u_a$ of $e(a)$ is unique as $a\searrow0$.
\end{theorem}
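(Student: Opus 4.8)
The plan is to argue by contradiction via a blow-up and Pohozaev-type analysis, following the scheme of Guo–Zhou \cite{Guo-2017-SIAM} adapted to the nonlocal Kirchhoff setting in a bounded domain. Suppose that for some sequence $a_k\searrow 0$ there exist two distinct positive minimizers $u_{1,k}:=u_{1,a_k}$ and $u_{2,k}:=u_{2,a_k}$ of $e(a_k)$. By Theorem~\ref{thm:mass-concentration-inner-point}, writing $\varepsilon_k=\lambda^{-1}a_k^{1/(p+2)}$, each $u_{j,k}$ has a unique maximum point $z_{j,k}\to x_1\in\Omega$ with $(z_{j,k}-x_1)/\varepsilon_k\to 0$, and the rescalings $w_{j,k}(x):=\varepsilon_k^{3/2}u_{j,k}(\varepsilon_k x+z_{j,k})$ converge to $Q(|x|)/|Q|_2$ in $H^1(\mathbb R^3)\cap L^\infty(\mathbb R^3)$. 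First I would record the Euler–Lagrange equations: $u_{j,k}$ solves $-(a_k+b\|\nabla u_{j,k}\|_2^2)\Delta u_{j,k}+V(x)u_{j,k}=\mu_{j,k}u_{j,k}+\beta^*|u_{j,k}|^{8/3}u_{j,k}$ in $\Omega$ with $u_{j,k}=0$ on $\partial\Omega$, and establish the asymptotics of the Lagrange multipliers $\mu_{j,k}$ and of the nonlocal coefficient $b_{j,k}:=b\|\nabla u_{j,k}\|_2^2$ (both independent of $j$ to leading order, with error terms controlled by the energy expansion from Theorem~\ref{thm:mass-concentration-inner-point}); since $x_1$ is an interior point and the mass concentrates there, the boundary plays no role in the leading-order blow-up, so the limiting equation is exactly \eqref{eqn:Q-unique-radial} after rescaling.

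Next I would introduce the difference $\eta_k:=\dfrac{u_{1,k}-u_{2,k}}{\|u_{1,k}-u_{2,k}\|_{L^\infty(\Omega)}}$ and its rescaling $\bar\eta_k(x):=\eta_k(\varepsilon_k x+z_{1,k})$ (or a symmetric variant), which satisfies a linear elliptic equation obtained by subtracting the two Euler–Lagrange equations and dividing through. The subtraction produces, besides the local semilinear terms, two genuinely nonlocal contributions: one from the difference of the Kirchhoff coefficients $b\|\nabla u_{1,k}\|_2^2-b\|\nabla u_{2,k}\|_2^2$ multiplying $\Delta u_{2,k}$, and one from the difference of the Lagrange multipliers $\mu_{1,k}-\mu_{2,k}$; both must be shown to be lower-order or handled through auxiliary scalar identities (the $L^2$-constraint $\|u_{j,k}\|_2^2=1$ forces an orthogonality that controls $\mu_{1,k}-\mu_{2,k}$, and the Kirchhoff difference can be expanded as $b\int\nabla(u_{1,k}+u_{2,k})\cdot\nabla(u_{1,k}-u_{2,k})$). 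Elliptic estimates then give that $\bar\eta_k$ is bounded in $C^1_{loc}$ and converges (along a subsequence) to some bounded $\eta_0$ solving the linearized equation $-2\Delta\eta_0+\tfrac13\eta_0-\tfrac{11}{3}Q^{8/3}\eta_0=(\text{a multiple of }Q)$ — i.e. $\mathcal L\eta_0$ lies in the span of $Q$ — possibly with an extra term $\nabla Q\cdot e$ coming from the nonlocal/multiplier corrections. By the nondegeneracy of $Q$ (the kernel of $\mathcal L$ on $H^1$ is spanned by $\partial_{x_1}Q,\partial_{x_2}Q,\partial_{x_3}Q$), together with the $L^2$-orthogonality $\int_{\mathbb R^3}Q\,\eta_0=0$ inherited from the mass constraint, one concludes $\eta_0=\sum_{i=1}^3 c_i\,\partial_{x_i}Q$ for constants $c_i$.

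The decisive step is then to show all $c_i=0$, which is where the hypothesis $p>1$ and the nondegeneracy condition \eqref{eqn:nondegenerate} enter. Here I would derive a Pohozaev-type (or Feynman–Hellmann type) identity by multiplying the equation for $u_{1,k}-u_{2,k}$ by suitable test functions — namely $\partial_{x_i}u_{2,k}$ and the mass-preserving deformation $x\cdot\nabla u_{2,k}$ — integrating over $\Omega$, using the boundary condition (the boundary terms are exponentially small since the concentration is interior, by the decay of $Q$), and carefully expanding both sides to the order $\varepsilon_k^{p}$ at which the potential $V$ first contributes. The potential contribution produces exactly the Hessian $\big(\partial^2_{x_ix_j}H(0)\big)$ of $H(y)=\int_{\mathbb R^3}|x+y|^pQ^2(x)\,\mathrm dx$ acting on the vector $(c_1,c_2,c_3)$, and \eqref{eqn:nondegenerate} forces this vector to vanish. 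Hence $\eta_0\equiv 0$. Finally, to reach a contradiction with $\|\eta_k\|_{L^\infty}=1$, I would upgrade the local convergence to global: using the uniform exponential decay of $u_{j,k}$ away from $z_{j,k}$ (a consequence of the De Giorgi–Nash–Moser iteration combined with the comparison arguments already used in the paper, cf. Lemma~\ref{lem:similar-to-Guo-Prop-3.1}) one shows $\|\bar\eta_k\|_{L^\infty}$ is attained in a fixed ball, so $\eta_0\not\equiv 0$, contradiction. Therefore $u_{1,k}\equiv u_{2,k}$ for $k$ large, proving uniqueness as $a\searrow 0$.

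The main obstacle I anticipate is the careful bookkeeping of the two nonlocal error terms — the Kirchhoff-coefficient difference and the Lagrange-multiplier difference — in the linearized equation: one must verify they do not destroy the limiting structure $\mathcal L\eta_0\in\mathrm{span}\{Q,\partial_{x_i}Q\}$ and, in the Pohozaev identities, that their cross-contributions are of strictly lower order than the $\varepsilon_k^p$ term carrying the Hessian of $H$. In the local GP case \cite{Guo-2017-SIAM} only the multiplier difference appears; here the presence of $b\|\nabla u\|_2^2$ and the exponent $\tfrac83$ in dimension three make the expansions more delicate, and keeping track of which terms survive at order $\varepsilon_k^p$ (as opposed to $\varepsilon_k^2$, $\varepsilon_k^{2p}$, etc.) requires the refined energy estimates and the explicit rate $|z_{a_k}-x_1|=o(\varepsilon_k)$ from Theorem~\ref{thm:mass-concentration-inner-point}.
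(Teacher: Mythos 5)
Your overall architecture --- contradiction with two minimizers, the normalized difference quotient, blow-up to a solution of the linearized equation, translation and dilation Pohozaev identities on a small ball, nondegeneracy of $Q$ and of $H$, and the final contradiction via the location of the maximum of the difference quotient --- is exactly the paper's. But there is a genuine gap at the step where you conclude $\eta_0=\sum_{i=1}^3 c_i\partial_{x_i}Q$. The limit equation is not $\mathcal L\eta_0\in\mathrm{span}\{Q\}$ (possibly plus a kernel term $\nabla Q\cdot e$): the difference of the Kirchhoff coefficients $b(\|\nabla u_{1,k}\|_2^2-\|\nabla u_{2,k}\|_2^2)$ contributes a term proportional to $\Delta Q$ with coefficient $\sigma_1\propto\int\nabla Q\cdot\nabla\eta_0$, and the multiplier difference contributes further multiples of $Q$ involving $\sigma_2\propto\int Q^{11/3}\eta_0$, so that in the limit $\mathcal L\eta_0=\sigma_1\Delta Q+(2\sigma_1-\sigma_2)Q$. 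Using $\mathcal L(x\cdot\nabla Q)=-4\Delta Q$ and $\mathcal L(Q+\tfrac43 x\cdot\nabla Q)=-\tfrac89Q$, this yields $\eta_0=b_0Q+\bar b_0\,x\cdot\nabla Q+\sum_i c_i\partial_{x_i}Q$ with two coefficients $b_0,\bar b_0$ determined by $\sigma_1,\sigma_2$, which are themselves functionals of $\eta_0$. The single orthogonality $\int_{\mathbb R^3} Q\,\eta_0=0$ inherited from the mass constraint gives only the one relation $b_0=\tfrac32\bar b_0$ (since $\int Q(x\cdot\nabla Q)=-\tfrac32|Q|_2^2$) and cannot kill both coefficients, and the nondegeneracy of $Q$ says nothing about them because $Q$ and $x\cdot\nabla Q$ do not lie in the kernel.

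The repair is exactly the paper's Step 3: the dilation Pohozaev identity obtained by testing with $(x-z_{2,k})\cdot\nabla\bar u_{i,k}$, combined with the identity $\mu_{i,k}=e(a_k)+\tfrac b2\varepsilon_{i,k}^{-4}-\tfrac47\beta^*\int_\Omega u_{i,k}^{14/3}$, produces the two additional relations $\int_{\mathbb R^3}Q^{11/3}\eta_0=\int_{\mathbb R^3}\nabla Q\cdot\nabla\eta_0=0$, i.e. $\sigma_1=\sigma_2=0$, and only then $b_0=\bar b_0=0$. You do list $x\cdot\nabla u_{2,k}$ among your test functions, but you deploy it after having already (incorrectly) reduced $\eta_0$ to the kernel; the dilation identity must be used to perform that reduction, not merely to confirm it. Once this is reordered, the remainder of your argument --- the translation identity producing the Hessian of $H$ acting on $(c_1,c_2,c_3)$, the hypothesis \eqref{eqn:nondegenerate} forcing $c_i=0$, and the nonvanishing of $\eta_0$ from the boundedness of the maximum points --- goes through as in the paper.
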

\begin{remark}\rm
We remark an interesting observation. In \cite{Guo-2021},
Guo and Zhou studied  the following Kirchhoff type energy functional
$$
E_b(u)=\int_{\mathbb{R}^{2}}(a|\nabla u|^2+V(x)u^2)\mathrm{d}x+\frac{b}{4}\left(\int_{\mathbb{R}^{2}}|\nabla u|^2\mathrm{d}x\right)^2-\frac{\beta}{4}
\int_{\mathbb{R}^{2}}|u|^{4}\mathrm{d}x
$$
and the
constrained minimization problem
$$
e(b):=\inf_{\{u\in\mathcal H:\int_{\mathbb R^2}u^2\mathrm{d}x=1\}} E_b(u),
$$
where $ \beta>0$ is a given constant. Observing that the exponent $4$ corresponds to mass critical local equation, they provided the detailed information on the limit behavior of minimizers as $b\searrow0$.
Moreover, we note that the recent paper \cite{Guo-2024} further established the limit behavior as $(a, b)\rightarrow(0,0)$ (where $b=b(a)\rightarrow 0$ as $a\searrow0$), which is also an interesting result.
We believe that such problems can also be
considered in a bounded domain $\Omega$ similar to the arguments in this article. In fact, the relevant results will be presented in the future.
\end{remark}
The remainder of this paper is organized as follows. In Section \ref{sec:existence-nonexistence}, we obtain the existence and nonexistence of minimizers and complete the proof of Theorem \ref{thm:existence-nonexistence}. Section \ref{sec:Blow-up-analysis} gives the blow-up analysis and  a crucial exponential estimate. Sections \ref{sec:Mass-concentration-at-an-inner-point} and \ref{sec:Mass-concentration-near-the-boundary} are devoted to investigating the delicate limit behavior of minimizers, and accomplishing the proof of Theorems \ref{thm:mass-concentration-inner-point}-\ref{thm:mass-concentration-boundary}, respectively. The local
uniqueness, i.e., Theorem \ref{thm:local-uniqueness}, is established in Section \ref{sec:Local uniqueness}.

\section{Existence and nonexistence of minimizers}\label{sec:existence-nonexistence}

In this section, we prove Theorem \ref{thm:existence-nonexistence} on the existence and nonexistence of minimizers for $e(a)$.
Recall the $Q=Q(|x|)$ is the unique (up to translations) positive radially symmetric solution of \eqref{eqn:Q-unique-radial}. It follows from {{\cite[Proposotion 4.1]{Gidas-1981}}} that
\begin{equation}\label{eqn:Q-property}
Q(|x|),\ |\nabla Q(|x|)|=O(|x|^{-1}e^{-|x|})\ \mbox{as}\ |x|\rightarrow\infty.
\end{equation}
Moreover, using the Pohozaev identity, one can easy to derive that
\begin{equation}\label{eqn:Q-Pohozaev-identity}
\int_{\mathbb{R}^{3}}|\nabla Q|^2\mathrm{d}x=\int_{\mathbb{R}^{3}}|Q|^2\mathrm{d}x=\frac{3}{7}\int_{\mathbb{R}^{3}}|Q|^{2+\frac{8}{3}}\mathrm{d}x.
\end{equation}
Here we also remark that the notation $C$ in the context denotes a positive constant, which might be changed from line to line and even in the same line.

\noindent\textbf{Proof of Theorem \ref{thm:existence-nonexistence}.} We first prove $(i)$, i.e., $e(a)$ has at least one positive minimizer for all $a>0$. By Gagliardo-Nirenberg type inequality \eqref{eqn:GN-type-inequality} and the definition of $\beta^*$, we obtain
\begin{equation}\label{eqn:E-lower-bounded-0}
\begin{aligned}
E_a(u)&=a\int_{\Omega}|\nabla u|^2\mathrm{d}x+\frac{b}{2}\left(\int_\Omega|\nabla u|^2\mathrm{d}x\right)^2+\int_\Omega V(x)u^2\mathrm{d}x
-\frac{3\beta^*}{7}\int_{\Omega}|u|^{2+\frac{8}{3}}\mathrm{d}x\\
&\geq a\int_{\Omega}|\nabla u|^2\mathrm{d}x\geq aC\int_{\Omega}|u|^2\mathrm{d}x,
\end{aligned}
\end{equation}
where we have used the continuous embedding $H^1_0(\Omega)\hookrightarrow L^2(\Omega)$ in the last inequality.
This gives $e(a)>0$ and $E_a(u)$ is bounded uniformly from below for $a>0$. Take the minimizing sequence $\{u_n\}\subset H_0^1(\Omega)$ of $e(a)$, that is, $\int_{\Omega}|u_n|^2\mathrm{d}x=1$ and $\lim_{n\rightarrow\infty}E_a(u_n)=e(a)$, then it is obvious that $\{u_n\}$ is bounded in $H_0^1(\Omega)$. Combining with the compactness embedding $H^1_0(\Omega)\hookrightarrow L^q(\Omega)$ for $1\leq q<6$ and passing to a subsequence of $\{u_n\}$ if necessary, we have that
$$
u_n\rightharpoonup u\ \mbox{in}\ H^1_0(\Omega)\ \mbox{and}\ u_n\rightarrow u\ \mbox{in}\ L^q(\Omega)\ \mbox{for}\ 1\leq q<6.
$$
In view of the weak lower semi-continuity, we can conclude that
$$
e(a)=\lim\limits_{n\rightarrow\infty}E_a(u_n)\geq E(u)\geq e(a),
$$
which indicates that $E_a(u)=e(a)$. Namely, we have proved the existence of a minimizer for all $a>0$. Moreover, we know that $ E_a(u)=E(|u|)$, then it follows from the maximum principle that $|u|>0$ in $\Omega$.

In the following, we prove $(ii)$, i.e., the nonexistence of minimizers for $e(0)$.  Since $\Omega\subset \mathbb{R}^{3}$ is a bounded domain, there exists an open ball $B_{2R}(x_i)\subset\Omega$, where $x_i$ is a inner point of $\Omega$ and $R>0$ is small enough. Let $\varphi\in C_0^\infty(\mathbb{R}^{3})$ be a
nonnegative cut-off function satisfying $\varphi(x)=1$ for $|x|\leq R$ and $\varphi(x)=0$ for all $|x|\geq 2R$. Consider a trial function
\begin{equation}\label{eqn:trail-function}
u_\tau(x)=\frac{A_\tau\tau^{\frac{3}{2}}}{|Q|_2}\varphi(x-x_i)Q(\tau(x-x_i)),
\end{equation}
where $A_\tau$ is determined such that $\int_{\Omega}|u_\tau|^2\mathrm{d}x=1$. In view of the exponential decay of $Q$, we can get that
$$
\frac{1}{A_\tau^2}=\frac{1}{|Q|_2^2}\int_{\mathbb R^3}\tau^3\varphi^2(x)Q^2(\tau x)\mathrm{d}x=1-o(e^{-\tau R})\ \mbox{as}\ \tau\rightarrow\infty,
$$
then
\begin{equation}\label{eqn:A-tau-1}
1\leq A_\tau^2\leq 1+o(e^{-\tau R})\ \mbox{as}\ \tau\rightarrow\infty.
\end{equation}
Moreover, we can also compute that
$$
\int_\Omega|\nabla u_\tau|^2\mathrm{d}x=\frac{A_\tau^2\tau^2}{|Q|_2^2}\int_{\mathbb R^3}|\nabla Q|^2\mathrm{d}x+o(e^{-\tau R})
$$
and
$$
\int_\Omega|u_\tau|^{2+\frac{8}{3}}\mathrm{d}x=\frac{A_\tau^{2+\frac{8}{3}}\tau^{4}}{|Q|_2^{2+\frac{8}{3}}}\int_{\mathbb R^3}|Q|^{2+\frac{8}{3}}\mathrm{d}x+o(e^{-2\tau R}) \ \mbox{as}\ \tau\rightarrow\infty.
$$
Therefore, by \eqref{eqn:Q-Pohozaev-identity} and \eqref{eqn:A-tau-1}, we can infer that
\begin{equation}\label{eqn:nabla-Kirchhoff-term-power}
\begin{aligned}
&\quad \frac{b}{2}\left(\int_\Omega|\nabla u_\tau|^2\mathrm{d}x\right)^2-\frac{3\beta^*}{7}\int_\Omega|u_\tau|^{2+\frac{8}{3}}\mathrm{d}x\\
&=\frac{bA_\tau^4\tau^4}{2|Q|_2^4}\left(\int_{\mathbb R^3}|\nabla Q|^2\mathrm{d}x\right)^2-\frac{3\beta^*A_\tau^{2+\frac{8}{3}}\tau^{4}}{7|Q|_2^{2+\frac{8}{3}}}\int_{\mathbb R^3}|Q|^{2+\frac{8}{3}}\mathrm{d}x+o(e^{-\frac{\tau R}{2}})\\
&=\frac{b}{2}A_\tau^4\tau^4(1-A_\tau^{\frac{2}{3}})+
o(e^{-\frac{\tau R}{2}})\\
&=o(e^{-\frac{\tau R}{2}})  \ \mbox{as}\ \tau\rightarrow\infty.
\end{aligned}
\end{equation}
In addition, it is easy to check that
$$
\lim\limits_{\tau\rightarrow\infty}\int_{\Omega}V(x)u_\tau^2\mathrm{d}x=\lim\limits_{\tau\rightarrow\infty}\int_\Omega V(x)\frac{A_\tau^2\tau^3}{|Q|_2^2}\varphi^2(x-x_i)Q^2(\tau(x-x_i))\mathrm{d}x=V(x_i).
$$
In light of ($V_1$), two cases may occur.
On the one hand, there is at least one inner point $x_i\in\Omega$ such that $V(x_i)=0$. Combining the above argument and \eqref{eqn:E-lower-bounded-0}, we can get that
$$
0\leq e(0)\leq\lim\limits_{\tau\rightarrow\infty}E_0(u_\tau)=V(x_i)=0,
$$
which implies that $e(0)=0$. Suppose by contradiction that if there is a minimizer $u$ such that $E_0(u)=e(0)=0$, then by \eqref{eqn:GN-type-inequality}, we derive that
$$
\frac{b}{2}\left(\int_\Omega|\nabla u|^2\mathrm{d}x\right)^2
\leq \frac{b}{2}\left(\int_\Omega|\nabla u|^2\mathrm{d}x\right)^2+\int_\Omega V(x)u^2\mathrm{d}x
=\frac{3\beta^*}{7}\int_{\Omega}|u|^{2+\frac{8}{3}}\mathrm{d}x\leq\frac{b}{2}\left(\int_\Omega|\nabla u|^2\mathrm{d}x\right)^2
$$
which indicates that
$$
\int_\Omega V(x)u^2\mathrm{d}x=V(x_i)=0\ \mbox{and}\ \frac{b}{2}\left(\int_\Omega|\nabla u|^2\mathrm{d}x\right)^2
=\frac{3\beta^*}{7}\int_{\Omega}|u|^{2+\frac{8}{3}}\mathrm{d}x.
$$
This contradicts that the best constant ${7}/({3|Q|_2^{\frac{8}{3}}})$ of \eqref{eqn:GN-type-inequality} is not attained.
On the other hand, we assume that $V(x)$ vanishes only on $\partial\Omega$, i.e., there is $x_i\in\partial\Omega$ such that $V(x_i)=0$. Since the domain $\Omega$ satisfies the interior ball condition, there exists an inner point $x_0\in\Omega$ and $R>0$ such that $B_{2R}(x_0)\subset\Omega$ and $x_i\in\partial B_{2R}(x_0)$.
Set $R_\tau=\frac{C\ln\tau}{\tau}$ for $C>2$ and $x_\tau:=x_i-2R_\tau\vec{n}$, where $\tau>0$ and $\vec{n}$ is the outer unit normal vector of $x_i$. Then we have $x_i\in\partial B_{2R_\tau}(x_\tau)$, $B_{2R_\tau}(x_\tau)\subset B_{2R}(x_0)$, $R_\tau\rightarrow 0$ and $x_\tau\rightarrow x_i$ as $\tau\rightarrow\infty$. Set $\varphi\in C_0^\infty(\mathbb{R}^{3})$ be a
nonnegative cut-off function satisfying
 $\varphi(x)=1$ for $|x|\leq 1$ and $\varphi(x)=0$ for all $|x|\geq 2$. Choose
\begin{equation}\label{eqn:a-0-trail-function}
\phi_\tau(x)=\frac{A_\tau\tau^{\frac{3}{2}}}{|Q|_2^2}\varphi\left(\frac{x-x_\tau}{R_\tau}\right)Q(\tau(x-x_\tau)),\ x\in\Omega,
\end{equation}
where $A_\tau>0$ is also determined such that $\int_{\Omega}|\phi_\tau|^2\mathrm{d}x=1$. Similar to \eqref{eqn:A-tau-1}-\eqref{eqn:nabla-Kirchhoff-term-power}, we can get that
$$
1\leq A_\tau^2\leq 1+o(\tau^{-4})\ \mbox{as}\ \tau\rightarrow\infty,
$$
and then
$$
\begin{aligned}
&\quad \frac{b}{2}\left(\int_\Omega|\nabla \phi_\tau|^2\mathrm{d}x\right)^2-\frac{3\beta^*}{7}\int_\Omega|\phi_\tau|^{2+\frac{8}{3}}\mathrm{d}x\\
&=\frac{bA_\tau^4\tau^4}{2|Q|_2^4}\left(\int_{\mathbb R^3}|\nabla Q|^2\mathrm{d}x\right)^2-\frac{3\beta^*A_\tau^{2+\frac{8}{3}}\tau^{4}}{7|Q|_2^{2+\frac{8}{3}}}\int_{\mathbb R^3}|Q|^{2+\frac{8}{3}}\mathrm{d}x+o(1)\\
&=\frac{b}{2}A_\tau^4\tau^4(1-A_\tau^{\frac{2}{3}})
+o(1)\\
&=o(1)  \ \mbox{as}\ \tau\rightarrow\infty.
\end{aligned}
$$
It follows from $V(x)\in C^\alpha(\bar{\Omega})$ for $0<\alpha<1$ that
$$
\lim\limits_{\tau\rightarrow\infty}\int_\Omega V(x)\phi_\tau^2\mathrm{d}x=V(x_i)=0.
$$
Thus,
$$
0\leq e(0)\leq\lim\limits_{\tau\rightarrow\infty}E_0(\phi_\tau)= V(x_i)=0,
$$
which implies that $e(0)=0$. The remaining process is similar to the previous case.

Finally, we check that $\lim_{a\searrow0}e(a)=0$.
It is obvious that $\lim\inf_{a\searrow0}e(a)\geq 0$ by \eqref{eqn:E-lower-bounded-0}. Taking the same trial function \eqref{eqn:a-0-trail-function}, we can obtain that
$$
e(a)\leq a\tau^2+V(x_i)+o(1) \ \mbox{as}\ \tau\rightarrow\infty,
$$
where $x_i$ satisfies $V(x_i)=0$. Choose $\tau=a^{-\frac{1}{4}}$, then
$$
\limsup\limits_{a\searrow0}e(a)\leq a^{\frac{1}{2}}+o(1)\rightarrow0.
$$
This means $\lim_{a\searrow0}e(a)=0$.
Consequently, the proof of Theorem \ref{thm:existence-nonexistence} is complete.

\section{Blow up analysis and a pre-estimate}\label{sec:Blow-up-analysis}

In this section, we first employ the blow-up analysis to determine the following  general results related to the limit behavior for the positive minimizer $u_a$ of $e(a)$ as $a\searrow0$.
\begin{lemma}\label{lem:properties-blow-up}
Assume that $V(x)$ satisfies $(V_1)$. Let $u_a$ be a positive minimizer of $e(a)$ and $\varepsilon_a$ is defined as
$$
\varepsilon_a:=\left(\int_\Omega|\nabla u_a|^2\mathrm{d}x\right)^{-\frac{1}{2}}.
$$
There hold
\begin{itemize}
  \item [($i$)] $\lim_{a\searrow0}\varepsilon_a=0$;
  \item [($ii$)] let $z_a$ be a maximum point of $u_a$ in $\Omega$ and define
  \begin{equation}\label{eqn:Omega-omega-a-definition}
  \Omega_a=\{x\in\mathbb R^3:(\varepsilon_ax+z_a)\in\Omega\},\ \omega_a(x):=\varepsilon_a^{\frac{3}{2}}u_a(\varepsilon_ax+z_a),\ x\in\Omega_a,
  \end{equation}
  then there exist positive constant $R>0$ and $\beta>0$ such that
  \begin{equation}\label{eqn:omega-a-2-beta-strict-0}
  \liminf\limits_{a\searrow0}\int_{B_{2R}\cap\Omega_a}|\omega_a|^2\mathrm{d}x\geq\beta>0.
  \end{equation}
\end{itemize}
\end{lemma}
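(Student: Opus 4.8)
Part $(i)$: I would argue by contradiction. If $\varepsilon_a\not\to0$, then along some sequence $a_k\searrow0$ one has $\int_\Omega|\nabla u_{a_k}|^2\,\mathrm{d}x\le C$, so $\{u_{a_k}\}$ is bounded in $H^1_0(\Omega)$ and, after passing to a subsequence, $u_{a_k}\rightharpoonup u_0$ in $H^1_0(\Omega)$ and $u_{a_k}\to u_0$ in $L^q(\Omega)$ for every $1\le q<6$; in particular $\int_\Omega u_0^2\,\mathrm{d}x=1$, $\int_\Omega Vu_{a_k}^2\,\mathrm{d}x\to\int_\Omega Vu_0^2\,\mathrm{d}x$ and $\int_\Omega|u_{a_k}|^{2+\frac{8}{3}}\,\mathrm{d}x\to\int_\Omega|u_0|^{2+\frac{8}{3}}\,\mathrm{d}x$. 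Since $a_k\int_\Omega|\nabla u_{a_k}|^2\,\mathrm{d}x\to0$ and $e(a_k)=E_{a_k}(u_{a_k})$, weak lower semicontinuity of the Dirichlet integral gives $E_0(u_0)\le\liminf_{k\to\infty}e(a_k)=0$, where $\lim_{a\searrow0}e(a)=0$ is the last assertion of Theorem~\ref{thm:existence-nonexistence}. On the other hand, \eqref{eqn:GN-type-inequality} and $\beta^*=\frac{b}{2}|Q|_2^{\frac{8}{3}}$ force $E_0(v)\ge0$ whenever $\int_\Omega v^2\,\mathrm{d}x=1$, so $E_0(u_0)=0=e(0)$, i.e. $u_0$ minimizes $e(0)$ — contradicting Theorem~\ref{thm:existence-nonexistence}$(ii)$. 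Hence $\int_\Omega|\nabla u_a|^2\,\mathrm{d}x\to\infty$, that is, $\varepsilon_a\to0$.

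Part $(ii)$, energy estimate: The change of variables $y=\varepsilon_ax+z_a$ gives the scaling identities $\int_{\Omega_a}\omega_a^2\,\mathrm{d}x=1$, $\int_{\Omega_a}|\nabla\omega_a|^2\,\mathrm{d}x=\varepsilon_a^2\int_\Omega|\nabla u_a|^2\,\mathrm{d}x=1$ and $\int_{\Omega_a}\omega_a^{2+\frac{8}{3}}\,\mathrm{d}x=\varepsilon_a^4\int_\Omega|u_a|^{2+\frac{8}{3}}\,\mathrm{d}x$. Moreover, by \eqref{eqn:GN-type-inequality} the bracket $\frac{b}{2}(\int_\Omega|\nabla u_a|^2\,\mathrm{d}x)^2-\frac{3\beta^*}{7}\int_\Omega|u_a|^{2+\frac{8}{3}}\,\mathrm{d}x$ is $\ge0$, so $a\varepsilon_a^{-2}=a\int_\Omega|\nabla u_a|^2\,\mathrm{d}x\le e(a)\to0$ and hence $a\varepsilon_a^2\to0$. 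Multiplying $e(a)=E_a(u_a)$ by $\varepsilon_a^4$ and using $0\le\int_\Omega Vu_a^2\,\mathrm{d}x\le\|V\|_{L^\infty(\bar\Omega)}$ one gets
$$
\varepsilon_a^4e(a)=a\varepsilon_a^2+\frac{b}{2}+\varepsilon_a^4\int_{\Omega_a}V(\varepsilon_ax+z_a)\omega_a^2\,\mathrm{d}x-\frac{3\beta^*}{7}\int_{\Omega_a}\omega_a^{2+\frac{8}{3}}\,\mathrm{d}x .
$$
As $a\searrow0$ the left-hand side and the first and third terms on the right vanish, whence $\int_{\Omega_a}\omega_a^{2+\frac{8}{3}}\,\mathrm{d}x\to\frac{7b}{6\beta^*}=\frac{7}{3|Q|_2^{\frac{8}{3}}}>0$; in particular $\liminf_{a\searrow0}\int_{\Omega_a}\omega_a^{2+\frac{8}{3}}\,\mathrm{d}x>0$.

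Part $(ii)$, localization: It remains to show that the mass of $\omega_a$ concentrates near the rescaled maximum point $x=0$ (note $z_a\in\Omega$, so $0\in\Omega_a$). For this I would invoke the Euler--Lagrange equation: with a Lagrange multiplier $\mu_a$, $u_a$ solves $-(a+b\int_\Omega|\nabla u_a|^2\,\mathrm{d}x)\Delta u_a+Vu_a=\mu_a u_a+\beta^*|u_a|^{\frac{8}{3}}u_a$ in $\Omega$ with $u_a=0$ on $\partial\Omega$; testing with $u_a$ and using the estimates above shows $\varepsilon_a^4\mu_a$ is bounded (in fact $\varepsilon_a^4\mu_a\to-\frac{b}{6}$). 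Setting $V_a(x):=V(\varepsilon_ax+z_a)$, the nonnegative function $\omega_a$ satisfies
$$
-(a\varepsilon_a^2+b)\Delta\omega_a+\varepsilon_a^4V_a\,\omega_a=(\varepsilon_a^4\mu_a)\,\omega_a+\beta^*|\omega_a|^{\frac{8}{3}}\omega_a\quad\text{in }\Omega_a,\qquad\omega_a=0\ \text{on }\partial\Omega_a,
$$
with $a\varepsilon_a^2+b\to b>0$ and $0\le\varepsilon_a^4V_a\le\varepsilon_a^4\|V\|_{L^\infty(\bar\Omega)}\to0$. Since $\|\omega_a\|_{H^1}$ is bounded and the nonlinearity is Sobolev-subcritical, standard elliptic regularity (Brezis--Kato/Moser iteration together with boundary Hölder estimates — uniform because $\partial\Omega_a=\varepsilon_a^{-1}(\partial\Omega-z_a)$ flattens as $\varepsilon_a\to0$) yields $\|\omega_a\|_{L^\infty(\Omega_a)}+\|\omega_a\|_{C^{0,\gamma}(\overline{\Omega_a})}\le C$ uniformly. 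Then $\omega_a(0)=\|\omega_a\|_{L^\infty}$ and $\int_{\Omega_a}\omega_a^{2+\frac{8}{3}}\,\mathrm{d}x\le\|\omega_a\|_{L^\infty}^{\frac{8}{3}}$ give $\liminf_{a\searrow0}\omega_a(0)\ge(\frac{7}{3|Q|_2^{\frac{8}{3}}})^{3/8}=:2\delta_0>0$; applying the Hölder bound between $0$ and its nearest point on $\partial\Omega_a$ forces $\mathrm{dist}(0,\partial\Omega_a)\ge(2\delta_0/C)^{1/\gamma}$ for small $a$ (so a fixed ball $B_\ell$ lies in $\Omega_a$), while the same bound gives $\omega_a\ge\delta_0$ on $B_{R_0}$ with $R_0:=(\delta_0/C)^{1/\gamma}$. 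Taking $2R:=\min\{\ell,R_0\}$ then yields $\int_{B_{2R}\cap\Omega_a}\omega_a^2\,\mathrm{d}x=\int_{B_{2R}}\omega_a^2\,\mathrm{d}x\ge\delta_0^2|B_{2R}|=:\beta>0$ for all small $a$, which is \eqref{eqn:omega-a-2-beta-strict-0}.

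Main obstacle: The delicate part is this last step — obtaining the uniform $L^\infty$ and boundary-Hölder bounds for the blow-up family $\{\omega_a\}$ on the varying domains $\Omega_a$, and ruling out that the mass escapes to $\partial\Omega_a$, i.e. that $\mathrm{dist}(0,\partial\Omega_a)\to0$. The mechanism is that the quantitative lower bound $\omega_a(0)\ge2\delta_0$ — a consequence of the non-vanishing of the $L^{2+\frac{8}{3}}$-mass obtained from the rescaled energy identity — is incompatible with a uniform $C^{0,\gamma}$ bound up to the Dirichlet boundary unless $0$ sits a definite distance from $\partial\Omega_a$; once this is secured, the rescaled maximum is genuinely interior at unit scale and \eqref{eqn:omega-a-2-beta-strict-0} follows.
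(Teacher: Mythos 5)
Your part $(i)$ and the rescaled energy identities at the start of part $(ii)$ coincide with the paper's argument, and they are correct. The localization step, however, takes a genuinely different route. The paper never proves (nor needs) that the origin stays a definite distance from $\partial\Omega_a$: it extends $\omega_a$ by zero to $\tilde\omega_a$ on $\mathbb R^3$, observes that this extension is a nonnegative subsolution of $-(a\varepsilon_a^2+b)\Delta w\le\beta^* w^{1+\frac{8}{3}}$, and applies the De Giorgi--Nash--Moser local boundedness estimate $\sup_{B_R(0)}\tilde\omega_a\le C\big(\int_{B_{2R}(0)}|\tilde\omega_a|^2\big)^{1/2}$ directly; the pointwise lower bound $\omega_a^{8/3}(0)\ge b/(12\beta^*)$ comes from $-\Delta\omega_a(0)\ge0$ at the maximum combined with $\varepsilon_a^4\mu_a\to-\frac{b}{6}$, and \eqref{eqn:omega-a-2-beta-strict-0} then follows in one line, with no case distinction on where $0$ sits relative to $\partial\Omega_a$. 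You instead obtain the lower bound on $\omega_a(0)$ from the non-vanishing of the $L^{14/3}$ mass (which also works, and gives a larger constant), and then invest in a uniform $L^\infty$ plus $C^{0,\gamma}(\overline{\Omega_a})$ estimate for the expanding domains in order to conclude both that $0$ is uniformly interior and that $\omega_a$ is bounded below on a fixed ball. This can be made rigorous: the rescaled domains satisfy a uniform exterior measure-density condition at unit scale because $\partial\Omega$ is $C^1$, so the boundary H\"older estimate holds with constants independent of $a$. But it is noticeably heavier machinery than the statement requires; the subsolution/zero-extension trick delivers \eqref{eqn:omega-a-2-beta-strict-0} without any boundary regularity theory and without deciding whether the concentration point approaches $\partial\Omega$ --- a question the paper deliberately postpones to Lemma \ref{lem:similar-to-Guo-Prop-3.1} and Sections \ref{sec:Mass-concentration-at-an-inner-point}--\ref{sec:Mass-concentration-near-the-boundary}. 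If you keep your route, the one step to write out carefully is the uniform boundary H\"older bound on the varying domains; everything else is standard.
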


\begin{proof}
($i$) Suppose by contradiction that $\lim_{a\searrow0}\varepsilon_a\nrightarrow0$, then there exists a sequence $\{a_k\}$ that satisfies $a_k\searrow0$ as $k\rightarrow\infty$. Moreover, $\{u_{a_k}\}$ is bounded uniformly in $H_0^1(\Omega)$. Then, up to a subsequence, there exists a  $u_0\in H_0^1(\Omega)$ such that
$$
u_{a_k}\rightharpoonup u_0\ \mbox{in}\ H^1_0(\Omega)\ \mbox{and}\ u_{a_k}\rightarrow u_0\ \mbox{in}\ L^q(\Omega)\ \mbox{for}\ 1\leq q<6.
$$
Thus, in view of the fact $e(a)\rightarrow0$ as $a\searrow0$, we conclude that
$$
0=e(0)\leq E_0(u_0)\leq\lim\limits_{k\rightarrow\infty}E_{a_k}(u_{a_k})=\lim\limits_{k\rightarrow\infty}e(a_k)=0,
$$
which shows that $u_0$ is a minimizer of $e(0)$. This contradicts the nonexistence of minimizers for $e(0)$ (i.e. Theorem \ref{thm:existence-nonexistence}-$(ii)$) and completes the proof of $(i)$.

($ii$) According to Gagliardo-Nirenberg type inequality \eqref{eqn:GN-type-inequality}, it is clear that
$$
\begin{aligned}
0&\leq\frac{b}{2}\left(\int_\Omega|\nabla u_a|^2\mathrm{d}x\right)^2-\frac{3\beta^*}{7}\int_\Omega|u_a|^{2+\frac{8}{3}}\mathrm{d}x\\
&=\frac{b}{2}\varepsilon_a^{-4}-\frac{3\beta^*}{7}\int_\Omega|u_a|^{2+\frac{8}{3}}\mathrm{d}x\\
&\leq E_a(u_a)=e(a)\rightarrow0\ \mbox{as}\ a\searrow0,
\end{aligned}
$$
which leads to
\begin{equation*}
\varepsilon_a^4\int_\Omega|u_a|^{2+\frac{8}{3}}\mathrm{d}x
\rightarrow\frac{7b}{6\beta^*}\
\end{equation*}
and
\begin{equation*}
\int_{\Omega}V(x)u_a^2\mathrm{d}x\rightarrow0\ \mbox{as}\ a\searrow0.
\end{equation*}
It follows from the definition of $\varepsilon_a$ that
\begin{equation}\label{eqn:nabla-omega-1}
\int_{\Omega_a}|\nabla\omega_a|^2\mathrm{d}x
=\varepsilon_a^2\int_\Omega|\nabla u_a|^2\mathrm{d}x=1,
\end{equation}
$$
\int_{\Omega_a}|\omega_a|^{2+\frac{8}{3}}\mathrm{d}x\rightarrow\frac{7b}{6\beta^*}\ \mbox{as}\ a\searrow0
$$
and
\begin{equation}\label{eqn:V-u-a-estimate}
\int_{\Omega}V(x)u_a^2\mathrm{d}x=\int_{\Omega_{a}}
V(\varepsilon_{a}x+z_{a})|\omega_{a}|^2\mathrm{d}x
\rightarrow0\ \mbox{as}\ a\searrow0.
\end{equation}
Since $u_a$ be a positive minimizer of $e(a)$, then
$u_a$ satisfies \eqref{eqn:this-article-mass-critical-Kirchhoff-equation}
with a associated Lagrange multiplier $\mu_a\in\mathbb R$, that is,
$$
\left\{\begin{array}{ll}
-(a+b\int_{\Omega}|\nabla u_a|^2\mathrm{d}x)\Delta u_a+V(x)u_a=\mu_a u_a+\beta^*u_a^{1+\frac{8}{3}}  &\mbox{in}\ {\Omega}, \\[0.1cm]
 u_a=0&\mbox{on}\ {\partial\Omega}, \\[0.1cm]
\end{array}
\right.
$$
We can directly derive that
$$
\mu_a=2e(a)-a\int_{\Omega}|\nabla u_a|^2\mathrm{d}x-\frac{\beta^*}{7}\int_{\Omega}|u_a|^{2+\frac{8}{3}}\mathrm{d}x-\int_\Omega V(x)u_a^2\mathrm{d}x,
$$
which signifies that
\begin{equation}\label{eqn:mu-a-estimate}
\varepsilon_a^4\mu_a\rightarrow-\frac{b}{6}\ \mbox{as}\ a\searrow0.
\end{equation}
Furthermore, we know that that $\omega_a(x)$ defined in \eqref{eqn:Omega-omega-a-definition} satisfies the following elliptic equation
\begin{equation}\label{eqn:omega-a-partial-0-equation}
\left\{\begin{array}{ll}
-(a\varepsilon_a^2+b)\Delta \omega_a+\varepsilon_a^4V(\varepsilon_ax+z_a)\omega_a=\varepsilon_a^4\mu_a \omega_a+\beta^*\omega_a^{1+\frac{8}{3}}  &\mbox{in}\ {\Omega_a}, \\[0.1cm]
 \omega_a=0&\mbox{on}\ {\partial\Omega_a}. \\[0.1cm]
\end{array}
\right.
\end{equation}
Noting that $\omega_a(x)$ attains its local maximum at $x=0$, we have $-\triangle\omega_a(0)\geq0$. Then, by $V(x)\geq0$ and \eqref{eqn:mu-a-estimate}, we can infer that
\begin{equation}\label{eqn:omega-a-0-strict-0}
\omega_a^{\frac{8}{3}}(0)\geq\frac{b}{12\beta^*}>0\ \mbox{as}\ a\searrow0
\end{equation}
and
\begin{equation}\label{eqn:omega-a-subsolution-equation}
-(a\varepsilon_a^2+b)\triangle\omega_a\leq\beta^*
\omega_a^{1+\frac{8}{3}}\ \mbox{in}\ \Omega_a.
\end{equation}
Now, we consider the following two cases.

{\bf Case 1.} There exists a constant $R>0$ such that $B_{2R}(0)\subset\Omega_a$ as $a\searrow0$. Since $\omega_a\in H^1(\Omega_a)$ is a subsolution of \eqref{eqn:omega-a-subsolution-equation}, applying the De Giorgi-Nash-Moser theory \cite[Theorem 4.1]{Han-2011}, we then obtain
$$
\max\limits_{B_R(0)}\omega_a\leq C\left(\int_{B_{2R}(0)}|\omega_a|^2\mathrm{d}x\right)^\frac{1}{2}=C\left(\int_{B_{2R}(0)\cap\Omega_a}|\omega_a|^2
\mathrm{d}x\right)^\frac{1}{2},
$$
where $C$ is a constant depending only on the bounded of $|\omega_a|_{L^r(\Omega_a)}$ for some $r\in (4,6)$. Therefore, combining with \eqref{eqn:omega-a-0-strict-0}, there exists a positive constant $\beta$ such that
$$
\liminf\limits_{a\searrow0}\int_{B_{2R}\cap\Omega_a}|\omega_a|^2\mathrm{d}x\geq\beta>0.
$$

{\bf Case 2.} For all constant $R>0$, $B_{2R}(0)\not\subset\Omega_a$ as $a\searrow0$. In this regard, define $\tilde{\omega}_a(x)\equiv\omega_a(x)$ for $x\in\Omega_a$ and $\tilde{\omega}_a(x)\equiv0$ for
$x\in\mathbb R^3\backslash\Omega_a$. Obviously, $\tilde{\omega}_a(x)$ satisfies
$$
-(a\varepsilon_a^2+b)\triangle\tilde{\omega}_a\leq\tilde{C}(x)
\tilde{\omega}_a\
\mbox{in}\ \mathbb R^3,
$$
where $\tilde{C}(x)\equiv\beta^*\omega_a^{\frac{8}{3}}(x)$ in $\Omega_a$ and $\tilde{C}(x)\equiv0$ in
$\mathbb R^3\backslash\Omega_a$. Furthermore, for any constant $R>0$, we derive $\sup_{x\in B_R(0)}\tilde w_a(x)\geq\omega_a(0)\geq\left(\frac{b}{12\beta^*}\right)^{\frac{3}{8}}$. Similar to {\bf Case 1}, we also have
$$
\sup\limits_{B_R(0)}\tilde{\omega}_a\leq C\left(\int_{B_{2R}(0)}|\tilde{\omega}_a|^2\mathrm{d}x\right)^\frac{1}{2}=C\left(\int_{B_{2R}(0)\cap\Omega_a}|
\tilde{\omega}_a|^2\mathrm{d}x\right)^\frac{1}{2}.
$$
This implies that  the proof of ($ii$) is complete.
\end{proof}

\begin{lemma}\label{lem:w-a-k-Q-2-converge-behavior}
Assume that $(V_1)$ holds, $u_a$ be a positive minimizer of $e(a)$ and $z_a$ be a maximum point of $u_a$ in $\Omega$. For any sequence $\{a_k\}$ satisfying $a_k\searrow0$ as $k\rightarrow\infty$, there exists a subsequence, still denoted by $\{a_k\}$, such that $\lim_{k\rightarrow\infty}z_{a_k}=x_{i_0}\in\bar{\Omega}$ satisfying $V(x_{i_0})=0$. Moreover, if
$$
\Omega_0=\lim\limits_{k\rightarrow\infty}\Omega_{a_k}=\mathbb R^3,
$$
then
\begin{enumerate}
  \item [($i$)] $u_{a_k}$ has a unique maximum point $z_{a_k}$ and satisfies
\begin{equation}\label{eqn:w-a-Q}
\lim\limits_{k\rightarrow\infty}\omega_{a_k}(x)=\frac{Q(|x|)}{|Q|_2}\ \mbox{in}\ H^1(\mathbb R^3)\cap L^\infty(\mathbb R^3);
\end{equation}
  \item [($ii$)]there exist $R>0$ and $C>0$ independent of $a_k$ such that
$$
\omega_{a_k}(x),\ |\nabla\omega_{a_k}(x)|,\ |\Delta\omega_{a_k}(x)|\leq Ce^{-\frac{|x|}{4}}\  \mbox{in}\ {\Omega_{a_k}\backslash B_R(0)}\ \mbox{as}\ a_k\searrow0.
$$
\end{enumerate}

\end{lemma}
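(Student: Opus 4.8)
The plan is to combine the blow-up facts of Lemma \ref{lem:properties-blow-up} with the classification of the limiting equation. First I would extract, from the uniform bound $\int_{\Omega_{a_k}}|\nabla\omega_{a_k}|^2\,\mathrm dx=1$ and $\int_{\Omega_{a_k}}\omega_{a_k}^2\,\mathrm dx=1$, a weak $H^1$ limit $\omega_0$ after passing to a subsequence; the local Moser estimate from the proof of part $(ii)$ of Lemma \ref{lem:properties-blow-up} gives local uniform bounds, hence (via elliptic regularity applied to \eqref{eqn:omega-a-partial-0-equation} and $a\varepsilon_a^2\to0$, $b>0$) $C^{2,\gamma}_{loc}$ convergence on compact subsets of $\Omega_0$. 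Passing to the limit in \eqref{eqn:omega-a-partial-0-equation}, using $\varepsilon_{a_k}^4\mu_{a_k}\to-b/6$ from \eqref{eqn:mu-a-estimate} and $\varepsilon_{a_k}^4 V(\varepsilon_{a_k}x+z_{a_k})\to0$ locally uniformly (because $V\in C^\alpha(\bar\Omega)$ is bounded), the limit $\omega_0\ge0$ solves $-b\Delta\omega_0+\tfrac b6\omega_0=\beta^*\omega_0^{1+8/3}$ in $\Omega_0$ with $\omega_0(0)\ge(b/(12\beta^*))^{3/8}>0$ by \eqref{eqn:omega-a-0-strict-0}, so $\omega_0\not\equiv0$. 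The convergence $\int V(x)u_{a_k}^2\,\mathrm dx\to0$ together with $V(x)=h(x)\prod|x-x_i|^{p_i}$ (or just $(V_1)$ and Fatou on \eqref{eqn:V-u-a-estimate}) forces $z_{a_k}\to x_{i_0}\in\bar\Omega$ with $V(x_{i_0})=0$; this is the first assertion.

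For the case $\Omega_0=\mathbb R^3$: here $\omega_0\in H^1(\mathbb R^3)$ is a nonnegative nontrivial solution of $-2\Delta(\tfrac b2\omega_0)+\tfrac13(\tfrac b2\omega_0)=\dots$; after the scaling $\omega_0=\theta\, Q(|\cdot|/\sigma)$ matching the coefficients of \eqref{eqn:Q-unique-radial}, the uniqueness (up to translation) of the positive solution $Q$ and the strong maximum principle identify $\omega_0(\cdot)=\beta\,Q(|\cdot-y_0|)$ for some $\beta>0$, $y_0\in\mathbb R^3$. The normalization is pinned down by weak lower semicontinuity in \eqref{eqn:nabla-omega-1} and Fatou: $\int|\nabla\omega_0|^2\le1$, $\int\omega_0^2\le1$, while the sharp Gagliardo–Nirenberg inequality \eqref{eqn:GN-type-inequality-R-N} together with $\int_{\Omega_{a_k}}\omega_{a_k}^{2+8/3}\,\mathrm dx\to\tfrac{7b}{6\beta^*}$ and $\beta^*=\tfrac b2|Q|_2^{8/3}$ forces equality throughout, hence no mass is lost: $\int\omega_0^2=\int|\nabla\omega_0|^2=1$ and $\omega_{a_k}\to\omega_0$ strongly in $H^1(\mathbb R^3)$. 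Evaluating the constants then gives $\omega_0=Q/|Q|_2$, and since $0$ is a maximum point of each $\omega_{a_k}$ while $Q$ is radially strictly decreasing, $y_0=0$; the $C^{2,\gamma}_{loc}$ convergence upgrades to $L^\infty(\mathbb R^3)$ convergence once the exponential tail bound of part $(ii)$ is established. Uniqueness of the maximum point $z_{a_k}$ for large $k$ follows from the nondegeneracy of the maximum of $Q$ at $0$ together with $C^2_{loc}$ convergence (any second maximum would have to escape to infinity, contradicting the tail bound).

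For part $(ii)$, the exponential decay: once $\omega_0=Q/|Q|_2$, the local convergence gives $\omega_{a_k}(x)^{8/3}\le \tfrac{b}{6\beta^*}\cdot\tfrac1{2}$ (say) for $|x|\ge R$ with $R$ large, uniformly in $k$; then from \eqref{eqn:omega-a-partial-0-equation}, using $\varepsilon_{a_k}^4\mu_{a_k}\le-b/12$ and $V\ge0$, one gets $-(a\varepsilon_{a_k}^2+b)\Delta\omega_{a_k}+\tfrac{b}{24}\omega_{a_k}\le0$ on $\Omega_{a_k}\setminus B_R(0)$, and a comparison-function argument with barrier $Ce^{-|x|/4}$ (choosing constants so the barrier dominates on $\partial B_R$ and the decay rate undershoots $\sqrt{b/24}/\sqrt{a\varepsilon_{a_k}^2+b}$ for small $a_k$) gives $\omega_{a_k}(x)\le Ce^{-|x|/4}$; the gradient and Laplacian bounds then follow from interior elliptic estimates on unit balls centered at $x$, using \eqref{eqn:omega-a-partial-0-equation} once more and the exponential pointwise bound just obtained. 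I expect the main obstacle to be the no-vanishing/no-dichotomy step—ruling out loss of $L^2$-mass at infinity so that the limit really is $Q/|Q|_2$ with unit mass—which is precisely where the sharpness of \eqref{eqn:GN-type-inequality-R-N} and the exact value of $\beta^*$ are used; the nonlocal coefficient $b+a\varepsilon_{a_k}^2$ must be handled carefully there since it multiplies the gradient term in the energy and in the equation.
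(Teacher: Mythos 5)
Your overall route coincides with the paper's: blow-up limit, identification of the limiting equation, uniqueness of $Q$, strong $H^1$ convergence, $C^2_{loc}$ convergence to get radial symmetry and uniqueness of the maximum point via nondegeneracy, and a comparison-function argument for the exponential decay. The one place where you deviate is the step you yourself flag as the main obstacle --- ruling out loss of $L^2$-mass --- and there your argument has a genuine gap. You write $\omega_0=\beta\,Q(|\cdot-y_0|)$ with a free amplitude $\beta>0$ and then try to pin down $\beta$ by combining Fatou with the sharp Gagliardo--Nirenberg inequality and the convergence $\int_{\Omega_{a_k}}\omega_{a_k}^{2+8/3}\to\frac{7b}{6\beta^*}$. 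But Fatou only yields $\int_{\mathbb R^3}|\omega_0|^{14/3}\le\liminf_k\int|\omega_{a_k}|^{14/3}=\frac{7}{3}|Q|_2^{-8/3}$, which is the same direction as the GN upper bound and therefore forces nothing; to conclude ``equality throughout'' you would need the reverse inequality, i.e.\ that no $L^{14/3}$-mass escapes to infinity --- which is precisely the concentration statement you are trying to prove. As written the argument is circular.

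The repair is the computation you allude to but do not carry out: the limiting equation $-b\Delta\omega_0+\frac b6\omega_0=\beta^*\omega_0^{1+8/3}$, i.e.\ $-2\Delta\omega_0+\frac13\omega_0=|Q|_2^{8/3}\omega_0^{1+8/3}$, admits (by Kwong's uniqueness) exactly one positive $H^1$ solution up to translation, namely $\omega_0=Q(|\cdot-y_0|)/|Q|_2$ --- there is no free amplitude, since matching the nonlinear coefficient forces $\mu=\mu^{1+8/3}$, hence $\mu=1$. Consequently $|\omega_0|_2^2=1=\lim_k|\omega_{a_k}|_2^2$, and weak $L^2$ convergence plus convergence of norms gives strong $L^2$ convergence with no appeal to the sharp GN inequality; interpolation then gives $L^q$ convergence for $q\in[2,6)$, and comparing the equations for $\omega_{a_k}$ and $\omega_0$ gives $|\nabla\omega_{a_k}|_2\to|\nabla\omega_0|_2$, hence strong $H^1$ convergence. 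This is exactly how the paper proceeds. The remaining steps of your proposal (symmetry from the critical point at the origin, uniqueness of the maximum via $\omega_0''(0)<0$ and $C^2_{loc}$ convergence, and the barrier argument for part (ii)) match the paper and are sound.
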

\begin{proof}
Since $\{z_{a_k}\}\subset\Omega$ and $\Omega$ is a bounded domain, the sequence $\{z_{a_k}\}$ is bounded uniformly as $a_k\searrow0$. Thus, for any sequence $\{z_{a_k}\}$, there exists a subsequence, still
denoted by  $\{z_{a_k}\}$, such that $z_{a_k}\rightarrow x_{i_0}$ as $a_k\searrow0$ for some $x_{i_0}\in\bar\Omega$. We claim that $x_{i_0}$ is the minimum point of $V(x)$, i.e., $V(x_{i_0})=0$. Indeed, if $V(x_{i_0})>0$, in view of Fatou's lemma and \eqref{eqn:omega-a-2-beta-strict-0}, we can deduce that
$$
\begin{aligned}
&\quad\liminf\limits_{k\rightarrow\infty}\int_{\Omega_{a_k}}
V(\varepsilon_{a_k}x+z_{a_k})|\omega_{a_k}|^2\mathrm{d}x&\\
&\geq\int_{\Omega_{a_k}\cap B_{2R}(0)}\lim\limits_{k\rightarrow\infty}V
(\varepsilon_{a_k}x+z_{a_k})|\omega_{a_k}|^2
\mathrm{d}x\geq\zeta>0,
\end{aligned}
$$
which contradicts \eqref{eqn:V-u-a-estimate}. Hence, the claim holds.

In the following, we prove that $(i)-(ii)$.
Recall that $\varepsilon_a^4\mu_a\rightarrow\!-\frac{b}{6}\ \mbox{as}\ a\searrow0$ and $\omega_{a_k}$ satisfies
\begin{equation}\label{eqn:omega-a-Euler-Lagrange-equation}
-(a\varepsilon_{a_k}^2+b)\Delta \omega_{a_k}+\varepsilon_{a_k}^4V(\varepsilon_{a_k}x+z_{a_k})\omega_{a_k}=\varepsilon_{a_k}^4\mu_{a_k} \omega_{a_k}+\beta^*|w_{a_k}|^{\frac{8}{3}}\omega_{a_k}\  \mbox{in}\ {\Omega_{a_k}}.
\end{equation}
From now on, when necessary we shall extend $\omega_{a_k}$ to $\mathbb R^3$ by letting $\omega_{a_k}\equiv0$ on $\mathbb R^3\backslash\Omega_{a_k}$.
Under the assumption $\Omega_0=\lim_{k\rightarrow\infty}\Omega_{a_k}=\mathbb R^3$, there exists nonnegative $\omega_0\in H^1(\mathbb R^3)$ such that $\omega_{a_k}\rightharpoonup\omega_0$ in $H^1(\mathbb R^3)$ as $k\rightarrow\infty$. Passing to the weak limit of $\omega_{a_k}$, $\omega_0$ satisfies the following equation
\begin{equation}\label{eqn:omega-0-Euler-Lagrange-equation}
-b\Delta\omega_0+\frac{b}{6}\omega_0=\beta^*|\omega_0|^{\frac{8}{3}}\omega_0\ \mbox{in}\ \mathbb R^3.
\end{equation}
It follows from \eqref{eqn:omega-a-2-beta-strict-0} that $\omega_0\neq0$ and thus $\omega_0>0$ by the strong maximum principle. Note that \eqref{eqn:omega-0-Euler-Lagrange-equation}
is equivalent to
$$
-2\Delta\omega_0+\frac{1}{3}\omega_0=|Q|_2^{\frac{8}{3}}|\omega_0|^{\frac{8}{3}}\omega_0\ \mbox{in}\ \mathbb R^3
$$
and $Q$ is the unique positive solution of \eqref{eqn:Q-unique-radial}, then
using a simple rescaling, we know that
$$
\omega_0(x)=\frac{Q(|x-y_0|)}{|Q|_2}\ \mbox{for\ some}\ y_0\in\mathbb R^3
$$
and $|\omega_0|_2^2=1$, which imply that
$$
\omega_{a_k}\rightarrow\omega_0\ \mbox{in}\ L^2(\mathbb R^3)\ \mbox{as}\ k\rightarrow\infty.
$$
In virtue of interpolation inequality and the uniform boundness of $\{\omega_{a_k}\}$ in $H^1(\mathbb R^3)$, we deduce that
$$
\omega_{a_k}\rightarrow\omega_0\ \mbox{in}\ L^q(\mathbb R^3)\ \mbox{with}\ q\in[2,6)\ \mbox{as}\ k\rightarrow\infty.
$$
Combining \eqref{eqn:omega-a-Euler-Lagrange-equation} and \eqref{eqn:omega-0-Euler-Lagrange-equation}, we conclude that $\lim_{k\rightarrow\infty}|\nabla\omega_{a_k}|_2^2=|\nabla\omega_0|_2^2$. Namely,
$$
\omega_{a_k}\rightarrow\omega_0\ \mbox{in}\ H^1(\mathbb R^3)\ \mbox{as}\ k\rightarrow\infty.
$$
Moreover, by applying De Giorgi-Nash-Moser theory, there holds for any $\xi\in\mathbb R^3$,
$$
\begin{aligned}
\max\limits_{B_1(\xi)}\omega_{a_k}&\leq C\left(\int_{B_{2}(\xi)}|\omega_{a_k}|^2\mathrm{d}x\right)^\frac{1}{2}
\\
&\leq
C\left(\int_{B_{2}(\xi)}2|\omega_0|^2+2|\omega_{a_k}-\omega_0|^2
\mathrm{d}x\right)^\frac{1}{2}\\
&\leq 2^{\frac{1}{2}}C
\left(\int_{B_{2}(\xi)}|\omega_0|^2\right)^\frac{1}{2}
+ 2^{\frac{1}{2}}C\left(\int_{\mathbb R^3}|\omega_{a_k}-\omega_0|^2
\mathrm{d}x\right)^\frac{1}{2}.
\end{aligned}
$$
Taking $|\xi|\rightarrow\infty$ in above inequality, we have that
$\omega_{a_k}$ decays uniformly to zero as $|x|\rightarrow\infty$
uniformly for large $k$.
Thus, based on $V(x)\in C^\alpha(\Omega)$ and $\Omega$ is a bounded domain in $\mathbb R^3$ with $C^1$ boundary, we can infer from \eqref{eqn:omega-a-partial-0-equation} that $\omega_{a_k}\in C_{loc}^{2,\alpha_1}(\Omega_{a_k})$ for some $\alpha_1\in(0,1)$, which indicates that
\begin{equation}\label{eqn:w-ak-w-0-C2}
\omega_{a_k}\rightarrow\omega_0\ \mbox{in}\ C_{loc}^2(\mathbb R^3)\ \mbox{as}\ k\rightarrow\infty.
\end{equation}
Note that the origin is a critical point of $\omega_{a_k}$ for all $k>0$, then it is also a critical point of $\omega_0$. Hence, we conclude  that $\omega_0$ is spherically symmetric about the origin and
\begin{equation}\label{eqn:omega-0-Q-2}
\omega_0(x)=\frac{Q(|x|)}{|Q|_2}.
\end{equation}
This implies that \eqref{eqn:w-a-Q} holds.
Furthermore,
in view of \eqref{eqn:w-ak-w-0-C2}, we know that all global maximum point of $\omega_{a_k}$ must stay in a ball $B_\delta(0)$ for some small constant $\delta>0$. Due to $\omega_0''(0)<0$, we can see that $\omega_{a_k}''(|x|)<0$ in $B_\delta(0)$ for $k$ sufficiently large. Then it follows from \cite[Lemma 4.2]{Ni-1991} that $\omega_{a_k}$ has no critical point other than the origin in $B_{\delta}(0)$ as $k>0$ large enough, which shows that the uniqueness of global maximum points for $\omega_{a_k}$. As a result, $z_{a_k}$ is the unique maximum point of $u_{a_k}$ and the proof of $(i)$ is complete.

 Finally, we check $(ii)$. Note that
$\omega_{a_k}$ decays to zero as $|x|\rightarrow\infty$
uniformly for large $k$. Then by \eqref{eqn:mu-a-estimate} and \eqref{eqn:omega-a-Euler-Lagrange-equation}, we know that there exists $R>0$ independent of $a_k$ such that
$$
-(a\varepsilon_{a_k}^2+b)\Delta \omega_{a_k}+\frac{b}{16}\omega_{a_k}\leq0\  \mbox{in}\ {\Omega_{a_k}\backslash B_R(0)}.
$$
By the comparison principle, comparing $\omega_{a_k}$ with $Ce^{-\frac{|x|}{4}}$, we get that
$$
\omega_{a_k}(x)\leq Ce^{-\frac{|x|}{4}}\  \mbox{in}\ {\Omega_{a_k}\backslash B_R(0)},
$$
where $C>0$ is independent of $a_k$.
Moreover, by the definition of $\Omega_{a_k}$, $(V_1)$ and the local elliptic estimate \cite[(3.15)]{Gilbarg-1997}, we also infer from \eqref{eqn:omega-a-Euler-Lagrange-equation}  that
$$
|\nabla\omega_{a_k}(x)|,\ |\Delta\omega_{a_k}(x)|\ \leq Ce^{-\frac{|x|}{4}}\  \mbox{in}\ {\Omega_{a_k}\backslash B_R(0)}.
$$
 The proof is complete.
\end{proof}

In view of the above lemma, we know that $\lim_{k\rightarrow\infty}z_{a_k}=x_{i_0}\in\bar{\Omega}$ satisfying $V(x_{i_0})=0$. Based on subsequent analysis, we found that if $x_{i_0}\in\partial\Omega$, some more detailed information is needed. Therefore, at the end of this section, we will present
a crucial exponential estimate, whose proof depends on the Gagliardo-Nirenberg type inequality with remainder as follows.

\begin{lemma}\label{lem:GFA-Theorem-5.1}{\rm(\!{{\cite[Theroem 5.1]{Carlen-2014}}}\rm)}
There exists a constant $C_{\frac{14}{3},3}>0$ such that
$$
\int_{\mathbb R^3}|\nabla u|^2\mathrm{d}x-\left(\int_{\mathbb R^3}|u|^{2+\frac{8}{3}}\mathrm{d}x\right)^{\frac{3}{7}}\geq-
\mathcal{C}_{\frac{14}{3},3}'
+C_{\frac{14}{3},3}\inf\limits_{\Phi\in\mathcal G}\|u-\Phi\|_{H^1(\mathbb R^3)}^2
$$
for all $u\in H^1(\mathbb R^3)$ satisfying $\int_{\mathbb R^3}|u|^2\mathrm{d}x=1$, where the set $\mathcal G$ is defined as
$$
\mathcal G=\{\sigma\bar Q(\cdot-d):d\in\mathbb R^3,\sigma=\pm1\},
$$
%$$
%\bar Q(x)=\frac{3^32^{\frac{21}{4}}}{7^3}|Q|_2^{-7}Q(\frac{3^22^{\frac{7}{2}}}{7^2}|Q|_2^{-4}x).
%$$
$$
\bar Q(x)=\left(\left(\frac{6}{7}\right)^22^{\frac{3}{2}}|Q|_2^{-4}\right)
^{\frac{3}{2}}|Q|_2^{-1}
Q\left(\left(\frac{6}{7}\right)^22^{\frac{3}{2}}|Q|_2^{-4}x\right).
$$
and
$$
\mathcal{C}_{\frac{14}{3},3}'
%=2^63^37^{-4}|Q|_2^{-8}
=\frac{8}{7}\left(\frac{6}{7}\right)^3|Q|_2^{-8}.
$$
\end{lemma}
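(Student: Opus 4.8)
Lemma \ref{lem:GFA-Theorem-5.1} is a quantitative (stability) refinement of the Gagliardo--Nirenberg inequality \eqref{eqn:GN-type-inequality-R-N} with $N=3$, and since it is quoted from \cite{Carlen-2014} we simply invoke it; the way one would \emph{prove} such a bound is as follows. Normalizing $\int_{\mathbb R^3}|u|^2\mathrm{d}x=1$ and writing $t:=\int_{\mathbb R^3}|\nabla u|^2\mathrm{d}x$, inequality \eqref{eqn:GN-type-inequality-R-N} gives $\big(\int_{\mathbb R^3}|u|^{14/3}\mathrm{d}x\big)^{3/7}\le\big(\tfrac{7}{3}|Q|_2^{-8/3}\big)^{3/7}t^{6/7}$, so the deficit $\delta(u):=\int_{\mathbb R^3}|\nabla u|^2\mathrm{d}x-\big(\int_{\mathbb R^3}|u|^{14/3}\mathrm{d}x\big)^{3/7}$ satisfies $\delta(u)\ge g(t)$ for an explicit strictly convex function $g$ with a unique minimum $g(t_*)=-\mathcal C_{\frac{14}{3},3}'$. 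Tracing equality cases, $\delta$ attains its minimum $-\mathcal C_{\frac{14}{3},3}'$ on the unit $L^2$-sphere exactly on $\mathcal G$: equality in \eqref{eqn:GN-type-inequality-R-N} forces $u$ to be (up to sign) a translate and a rescaling of $Q$, and $t=t_*$ pins the rescaling as well as the normalization of $\bar Q$. Hence the lemma is equivalent to the coercivity estimate $\delta(u)+\mathcal C_{\frac{14}{3},3}'\ge C_{\frac{14}{3},3}\,\mathrm{dist}_{H^1(\mathbb R^3)}(u,\mathcal G)^2$ whenever $\int_{\mathbb R^3}|u|^2\mathrm{d}x=1$.

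I would establish this by a \emph{local}-plus-\emph{global} argument. For the local part, near $\mathcal G$ one decomposes $u=\sigma(\bar Q(\cdot-d)+v)$, choosing $\sigma=\pm1$ and $d\in\mathbb R^3$ so that $v$ is orthogonal to $\mathrm{span}\{\bar Q(\cdot-d),\partial_1\bar Q(\cdot-d),\partial_2\bar Q(\cdot-d),\partial_3\bar Q(\cdot-d)\}$, the $\bar Q$-direction being excluded (to second order) by the mass constraint. A Taylor expansion shows that the Hessian of $\delta$ at $\bar Q$ on this subspace is bounded from below, up to a positive rescaling, by the quadratic form of the linearized operator $L_+=-2\Delta+\tfrac13-\tfrac{11}{3}Q^{8/3}$, whose kernel is exactly $\mathrm{span}\{\partial_1Q,\partial_2Q,\partial_3Q\}$ by the nondegeneracy of the ground state $Q$ (Kwong \cite{Kwong-1989}; see also \cite{Weinstein-1983}); moreover the scaling direction is strictly penalized because $g$ has a strict minimum at $t_*$. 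A standard coercivity lemma of Weinstein type then produces $\rho,c_0>0$ with $\delta(u)+\mathcal C_{\frac{14}{3},3}'\ge c_0\|v\|_{H^1}^2\ge c_0\,\mathrm{dist}_{H^1}(u,\mathcal G)^2$ whenever $\mathrm{dist}_{H^1}(u,\mathcal G)<\rho$.

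For the global part, I would show $\delta(u)+\mathcal C_{\frac{14}{3},3}'\ge m(\rho)>0$ once $\mathrm{dist}_{H^1}(u,\mathcal G)\ge\rho$: if some sequence $u_n$ with $\int|u_n|^2\mathrm{d}x=1$ had $\delta(u_n)+\mathcal C_{\frac{14}{3},3}'\to0$, then $\int|\nabla u_n|^2\mathrm{d}x\to t_*$ and $\int|u_n|^{14/3}\mathrm{d}x\to\big(\tfrac{7}{3}|Q|_2^{-8/3}\big)t_*^2$, so $\{u_n\}$ is a minimizing sequence for the Gagliardo--Nirenberg quotient, and the concentration--compactness principle (vanishing excluded since $\int|u_n|^{14/3}\mathrm{d}x\not\to0$, dichotomy excluded by strict sub-additivity of the quotient under splitting) forces, after a translation and possibly a sign change, $u_n\to\bar Q$ strongly in $H^1(\mathbb R^3)$, contradicting $\mathrm{dist}_{H^1}(u_n,\mathcal G)\ge\rho$. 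Gluing: when $\delta(u)+\mathcal C_{\frac{14}{3},3}'\ge1$ the linear growth of $g$ bounds $\int|\nabla u|^2\mathrm{d}x$, hence $\mathrm{dist}_{H^1}(u,\mathcal G)^2$, by a multiple of $\delta(u)+\mathcal C_{\frac{14}{3},3}'$; when $\delta(u)+\mathcal C_{\frac{14}{3},3}'<1$ one uses the local estimate if $\mathrm{dist}_{H^1}(u,\mathcal G)<\rho$ and $\delta(u)+\mathcal C_{\frac{14}{3},3}'\ge m(\rho)\ge(m(\rho)/D^2)\,\mathrm{dist}_{H^1}(u,\mathcal G)^2$ otherwise, where $D$ is the uniform bound on $\mathrm{dist}_{H^1}(u,\mathcal G)$ valid once $\int|\nabla u|^2\mathrm{d}x$ is bounded. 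Taking $C_{\frac{14}{3},3}$ to be the smallest of these constants completes the argument.

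The main obstacle is the local step: the spectral coercivity of the Hessian of $\delta$ rests on the nondegeneracy of $L_+$ — a delicate ODE/spectral fact for $Q$ — and on correctly separating the genuine zero modes (translations) from the penalized directions (scaling and the constrained $\bar Q$-direction); in the global step the subtle point is ruling out dichotomy. In \cite{Carlen-2014} this is organized somewhat differently, by dualizing \eqref{eqn:GN-type-inequality-R-N} into a sharp lower bound for the lowest eigenvalue of a Schr\"odinger operator $-\Delta-W$, proving stability of that eigenvalue bound near the optimal potential $W\propto Q^{8/3}$, and transferring the estimate back; we refer the reader to \cite{Carlen-2014} for the complete proof.
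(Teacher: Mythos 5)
The paper gives no proof of this lemma at all---it is quoted verbatim from \cite[Theorem 5.1]{Carlen-2014}---and your proposal likewise ultimately rests on that citation, so the two ``approaches'' coincide. Your supplementary Bianchi--Egnell-type sketch is consistent with the stated constants (indeed $g(t_*)=-\tfrac{1}{6}t_*=-\tfrac{1728}{2401}|Q|_2^{-8}=-\mathcal{C}_{\frac{14}{3},3}'$ and $t_*=\mu^2$ with $\mu=(\tfrac{6}{7})^2 2^{\frac{3}{2}}|Q|_2^{-4}$, matching the normalization of $\bar Q$), but none of it appears in, or is needed for, the paper.
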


\begin{lemma}\label{lem:similar-to-Guo-Prop-3.1}
If $(V_1)$ holds and $u_a$ be a positive minimizer of $e(a)$. Suppose that $\Omega_{a_k}$, $\omega_{a_k}$ are defined by \eqref{eqn:Omega-omega-a-definition} and $z_{a_k}\rightarrow x_i$ as $k\rightarrow\infty$ for some $x_i\in\partial\Omega$ satisfying $V(x_i)=0$. Then up to a subsequence, there exists a sequence $\{\sigma_k\}$ satisfying $\sigma_k\rightarrow$ as $k\rightarrow\infty$ such that
\begin{equation}\label{eqn:key-estimate}
\frac{\left(\int_{\Omega_{a_k}}|\omega_{a_k}|^{2+\frac{8}{3}}\mathrm{d}x\right)^3}
{\left(\int_{\Omega_{a_k}}|\nabla\omega_{a_k}|^2\mathrm{d}x\right)^6}
%\leq\left(\left(\frac{7}{6}\right)^{-7}-\left(\frac{7}{6}\right)^{-6}\right)^{-1}\cdot(-\bar C+\bar C\inf\limits_{\Phi\in\mathcal G}||\omega_{a_k}-\Phi||^2).
\leq\left(\frac{7}{3}\right)^{3}|Q|_2^{-8}- Ce^{-\frac{2}{1+\sigma_k}\cdot\frac{|z_{a_k}-x_i|}{\varepsilon_{a_k}}}
\ \mbox{as}\ k\rightarrow\infty,
\end{equation}
where $\varepsilon_{a_k}$ is as in the Lemma \rm \ref{lem:properties-blow-up}.
\end{lemma}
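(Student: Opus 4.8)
The plan is to combine the remainder Gagliardo--Nirenberg inequality of Lemma~\ref{lem:GFA-Theorem-5.1} with the elementary fact that $\omega_{a_k}$ vanishes on $\mathbb R^3\setminus\Omega_{a_k}$, a set whose distance to the origin is governed by $|z_{a_k}-x_i|/\varepsilon_{a_k}$. First I would extend $\omega_{a_k}$ by zero to all of $\mathbb R^3$, so that $\omega_{a_k}\in H^1(\mathbb R^3)$ with $\int_{\mathbb R^3}\omega_{a_k}^2\,\mathrm{d}x=1$ and, by \eqref{eqn:nabla-omega-1}, $\int_{\mathbb R^3}|\nabla\omega_{a_k}|^2\,\mathrm{d}x=1$. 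Then the left-hand side of \eqref{eqn:key-estimate} is just $A_k^3$, where $A_k:=\int_{\Omega_{a_k}}|\omega_{a_k}|^{2+\frac{8}{3}}\,\mathrm{d}x$, and the computations in the proof of Lemma~\ref{lem:properties-blow-up} already give $A_k\to\frac{7b}{6\beta^*}=\frac{7}{3}|Q|_2^{-\frac{8}{3}}$, hence $A_k^3\to\left(\frac{7}{3}\right)^3|Q|_2^{-8}$. Thus it suffices to exhibit a quantitative gap below this limit.

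Next I would apply Lemma~\ref{lem:GFA-Theorem-5.1} not to $\omega_{a_k}$ itself but to the rescaled function $(\omega_{a_k})_\lambda(x):=\lambda^{3/2}\omega_{a_k}(\lambda x)$ at the scale that makes it sharp. Since $(\omega_{a_k})_\lambda$ still has unit $L^2$-norm and $\int_{\mathbb R^3}|\nabla(\omega_{a_k})_\lambda|^2\,\mathrm{d}x-\left(\int_{\mathbb R^3}|(\omega_{a_k})_\lambda|^{2+\frac{8}{3}}\,\mathrm{d}x\right)^{\frac{3}{7}}=\lambda^2-\lambda^{\frac{12}{7}}A_k^{\frac{3}{7}}$, this quantity is minimised over $\lambda>0$ at $t_k:=\left(\frac{6}{7}\right)^{\frac{7}{2}}A_k^{\frac{3}{2}}$, with minimum value $-\frac{1}{7}\left(\frac{6}{7}\right)^6A_k^3$. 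Feeding $(\omega_{a_k})_{t_k}$ into Lemma~\ref{lem:GFA-Theorem-5.1} and using the identity $7\left(\frac{7}{6}\right)^6\mathcal{C}_{\frac{14}{3},3}'=\left(\frac{7}{3}\right)^3|Q|_2^{-8}$ then yields
\[
A_k^3\ \le\ \left(\frac{7}{3}\right)^3|Q|_2^{-8}-7\left(\frac{7}{6}\right)^6 C_{\frac{14}{3},3}\,d_k^2,\qquad d_k:=\inf_{\Phi\in\mathcal{G}}\big\|(\omega_{a_k})_{t_k}-\Phi\big\|_{H^1(\mathbb R^3)},
\]
so the whole problem reduces to a lower bound on $d_k$.

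To bound $d_k$ from below I would proceed as follows. As $A_k^3\to\left(\frac{7}{3}\right)^3|Q|_2^{-8}$, the displayed inequality forces $d_k\to0$; since $(\omega_{a_k})_{t_k}\ge0$ and $\int_{\mathbb R^3}\bar Q^2\,\mathrm{d}x=1$, the sign $\sigma=-1$ in $\mathcal{G}$ is excluded for large $k$, so the (attained) infimum is realised by some $\Phi_k=\bar Q(\cdot-d(k))$. Rescaling \eqref{eqn:omega-a-2-beta-strict-0} gives $\int_{B_{R'}(0)}|(\omega_{a_k})_{t_k}|^2\,\mathrm{d}x\ge\beta>0$ for a fixed $R'$, and together with $\|(\omega_{a_k})_{t_k}-\Phi_k\|_{H^1}\to0$ this keeps $\{d(k)\}$ bounded. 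Because $(\omega_{a_k})_{t_k}$ vanishes off $t_k^{-1}\Omega_{a_k}$, we get $d_k^2\ge\big\|\bar Q(\cdot-d(k))\big\|_{H^1(\mathbb R^3\setminus t_k^{-1}\Omega_{a_k})}^2$. Since $\partial\Omega$ is $C^1$ at $x_i$ and $z_{a_k}\to x_i$, the rescaled complement $\mathbb R^3\setminus t_k^{-1}\Omega_{a_k}$ contains a ball whose radius tends to $+\infty$ and whose centre has norm at most $\frac{|z_{a_k}-x_i|}{\varepsilon_{a_k}t_k}(1+o(1))$; invoking the exponential decay \eqref{eqn:Q-property} of $Q$ (hence of $\bar Q$) and the boundedness of $\{d(k)\}$ then produces a constant $C>0$ with $d_k^2\ge C e^{-\frac{2}{1+\sigma_k}\cdot\frac{|z_{a_k}-x_i|}{\varepsilon_{a_k}}}$, where $\sigma_k\to0$ absorbs $t_k/c-1$ (with $c=\left(\frac{6}{7}\right)^22^{\frac{3}{2}}|Q|_2^{-4}$ the dilation appearing in $\bar Q$, and $t_k\to c$ by the previous paragraph) together with the $o(1)$ geometric corrections. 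Substituting back into the inequality of the second step gives \eqref{eqn:key-estimate}.

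The hard part will be these last two steps rather than the bookkeeping of the first: on the one hand, arranging the rescaling $t_k$ so that Lemma~\ref{lem:GFA-Theorem-5.1} delivers exactly the scale-invariant remainder bound for $A_k^3$ (which relies on $\lim_k t_k$ coinciding with the dilation constant of $\bar Q$); on the other hand, the geometric/decay estimate for $\big\|\bar Q(\cdot-d(k))\big\|_{H^1(\mathbb R^3\setminus t_k^{-1}\Omega_{a_k})}$, where one must verify that the minimising translation $d(k)$ stays bounded and keep track of every constant so that the exponent comes out precisely $\frac{2}{1+\sigma_k}\frac{|z_{a_k}-x_i|}{\varepsilon_{a_k}}$ with $\sigma_k\to0$. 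I expect this geometric estimate to be the genuine bottleneck.
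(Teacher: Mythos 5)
Your proposal follows essentially the same route as the paper's proof: extend $\omega_{a_k}$ by zero, rescale by $t_k=\left(\frac{6}{7}\right)^{7/2}A_k^{3/2}$ (which is exactly the paper's $\lambda_k$), apply the Carlen--Frank--Lieb remainder inequality with the identity $7\left(\frac{7}{6}\right)^6\mathcal{C}_{\frac{14}{3},3}'=\left(\frac{7}{3}\right)^3|Q|_2^{-8}$, control the optimal translation via the mass concentration \eqref{eqn:omega-a-2-beta-strict-0}, and bound the distance to $\mathcal G$ from below by the $L^2$-mass of the translated $\bar Q$ on the complement of the rescaled domain using the exponential decay of $Q$. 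The only (harmless) imprecision is the claim that the rescaled complement contains a ball of radius tending to $+\infty$ centered at norm about $|z_{a_k}-x_i|/(\varepsilon_{a_k}t_k)$ --- such a ball would eventually swallow the origin; what is actually needed, and what the paper uses, is only that $\Omega_{a_k}^c$ meets a fixed-size neighborhood of a point at distance of order $|z_{a_k}-x_i|/\varepsilon_{a_k}$ from the origin.
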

\begin{proof}
Define $\tilde\omega_{a_k}\equiv\omega_{a_k}$ for $x\in\Omega_{a_k}$ and $\tilde\omega_{a_k}\equiv0$ for $x\in\mathbb R^3\backslash\Omega_{a_k}$, then
$$
\int_{\mathbb R^3}|\nabla\tilde\omega_{a_k}|^2\mathrm{d}x=
\int_{\Omega_{a_k}}|\nabla\omega_{a_k}|^2\mathrm{d}x\ \mbox{and}\
\int_{\mathbb R^3}|\tilde\omega_{a_k}|^q\mathrm{d}x=\int_{\Omega_{a_k}}
|\tilde\omega_{a_k}|^q\mathrm{d}x\ \mbox{for}\ q>1.
$$
Set $\bar\omega_{a_k}=\lambda_k^{\frac{3}{2}}\tilde\omega_{a_k}(\lambda_kx)$, where
$$
\lambda_k=\left(\frac{7}{6}\right)^{-\frac{7}{2}}
\frac{\left(\int_{\mathbb R^3}|\tilde\omega_{a_k}|^{2+\frac{8}{3}}\mathrm{d}x\right)^{\frac{3}{2}}}
{\left(\int_{\mathbb R^3}|\nabla\tilde\omega_{a_k}|^2\mathrm{d}x\right)^{\frac{7}{2}}}.
$$
According to Lemma \ref{lem:GFA-Theorem-5.1}, it is not difficult to calculate that
$$
\left(\left(\frac{7}{6}\right)^{-7}-\left(\frac{7}{6}\right)^{-6}\right)
\frac{\left(\int_{\mathbb R^3}|\tilde\omega_{a_k}|^{2+\frac{8}{3}}\mathrm{d}x\right)^{3}}
{\left(\int_{\mathbb R^3}|\nabla\tilde\omega_{a_k}|^2\mathrm{d}x\right)^{6}}
\geq -\mathcal{C}_{\frac{14}{3},3}'+C_{\frac{14}{3},3}\inf\limits_{\Phi\in\mathcal G}\|\bar\omega_{a_k}-\Phi\|_{H^1(\mathbb R^3)}^2\ \mbox{as}\ k\rightarrow\infty.
$$
This above inequality gives that
\begin{equation}\label{eqn:Phi-mathcal-G-inf}
\begin{aligned}
\frac{\left(\int_{\Omega_{a_k}}|\omega_{a_k}|^{2+\frac{8}{3}}\mathrm{d}x\right)^{3}}
{\left(\int_{\Omega_{a_k}}|\nabla\omega_{a_k}|^2\mathrm{d}x\right)^{6}}
&\leq-7\left(\frac{7}{6}\right)^{6}
\left(-\mathcal{C}_{\frac{14}{3},3}'+C_{\frac{14}{3},3}\inf\limits_{\Phi\in\mathcal G}\|\bar\omega_{a_k}-\Phi\|_{H^1(\mathbb R^3)}^2\right)\\
&=\left(\frac{7}{3}\right)^{3}|Q|_2^{-8}-
7\left(\frac{7}{6}\right)^{6}C_{\frac{14}{3},3}\inf\limits_{\Phi\in\mathcal G}\|\bar\omega_{a_k}-\Phi\|_{H^1(\mathbb R^3)}^2
\ \mbox{as}\ k\rightarrow\infty.
\end{aligned}
\end{equation}
In view of $u_{a_k}\in H^1_0(\Omega)$, we have $\omega_{a_k}\in H^1_0(\Omega_{a_k})$. By Lemma \ref{lem:properties-blow-up}, we know that
$\lambda_k\rightarrow\left(\frac{6}{7}\right)^{2}2^{\frac{3}{2}}|Q|_2^{-4}$. Then there exists a sequence $\{\sigma_k\}$ satisfying $\sigma_k\rightarrow0$ as $k\rightarrow\infty$ such that $\lambda_k=(1+\sigma_k)\left(\frac{6}{7}\right)^{2}2^{\frac{3}{2}}|Q|_2^{-4}$.
Hence, from the definitions of $\bar \omega_{a_k}$ and $\mathcal G$ (see Lemma \ref{lem:GFA-Theorem-5.1}), there is a sequence $\{y_k\}\in\mathbb R^3$ such that
\begin{equation}\label{eqn:Phi-omega-a-k-2}
\begin{aligned}
\inf\limits_{\Phi\in\mathcal G}\|\bar\omega_{a_k}-\Phi\|_{H^1(\mathbb R^3)}^2
&=\|\bar\omega_{a_k}-\bar Q(\cdot-y_k)\|_{H^1(\mathbb R^3)}^2\\
&\geq\int_{\mathbb R^3}\Big|\tilde \omega_{a_k}-\frac{1}{(1+\sigma_k)^{\frac{3}{2}}|Q|_2}
Q(\frac{x-y_k}{1+\sigma_k})\Big|^2\mathrm{d}x
\ \mbox{as}\ k\rightarrow\infty.
\end{aligned}
\end{equation}

In the following, we consider the two cases. First, if the sequence $\{y_k\}$ satisfies $|\frac{y_k}{1+\sigma_k}|\rightarrow\infty$ as $k\rightarrow\infty$, we can see from \eqref{eqn:omega-a-2-beta-strict-0} and \eqref{eqn:Phi-omega-a-k-2} that
$$
\inf\limits_{\Phi\in\mathcal G}\|\bar\omega_{a_k}-\Phi\|_{H^1(\mathbb R^3)}^2\geq\frac{\beta}{2}>0\ \mbox{as}\ k\rightarrow\infty.
$$
Note that the part $e^{-\frac{2}{1+\sigma_k}\cdot\frac{|z_{a_k}-x_i|}{\varepsilon_{a_k}}}$
 is always bounded, then it follows from \eqref{eqn:Phi-mathcal-G-inf} that
 there exists  $C>0$ such that
\eqref{eqn:key-estimate} holds.
Now we discuss the other case where $|\frac{y_k}{1+\sigma_k}|\leq C_0$ holds uniformly in $k$.
Observe that $x_i\in\partial\Omega$ and then $\Omega_{a_k}^c\cap B_{\frac{|z_{a_k}-x_i|}{\varepsilon_{a_k}}+2C_0}
(\frac{y_k}{1+\sigma_k})\neq\emptyset$.
Therefore, in view of the definition of $\tilde \omega_{a_k}$, exponential decay of $Q$ and \eqref{eqn:Phi-omega-a-k-2} that
$$
\begin{aligned}
\inf\limits_{\Phi\in\mathcal G}\|\bar\omega_{a_k}-\Phi\|_{H^1(\mathbb R^3)}
&\geq\int_{\Omega_{a_k}^c}\Big|\frac{1}{(1+\sigma_k)^{\frac{3}{2}}|Q|_2}Q(\frac{x-y_k}{1+\sigma_k})\Big|^2\mathrm{d}x\\
&\geq\int_{\Omega_{a_k}^c\cap B_{\frac{|z_{a_k}-x_i|}{\varepsilon_{a_k}}+2C_0}(\frac{y_k}{1+\sigma_k})}
\frac{1}{(1+\sigma_k)^3|Q|_2^2}Q^2(\frac{x-y_k}{1+\sigma_k})\mathrm{d}x\\
&\geq Ce^{-\frac{2}{1+\sigma_k}\cdot\frac{|z_{a_k}-x_i|}{\varepsilon_{a_k}}}
\ \mbox{as}\ k\rightarrow\infty.
\end{aligned}
$$
Then by \eqref{eqn:Phi-mathcal-G-inf}, the desire result is also reached.
\end{proof}
\section{Mass concentration at an inner point}\label{sec:Mass-concentration-at-an-inner-point}

 Under the assumption that $V(x)$ satisfying \eqref{eqn:V-potential-x-x-i} and $Z_1\neq\emptyset$,
in this section we will focus on the inner mass concentration of positive minimizers as $a\searrow0$, that is,
 Theorem \ref{thm:mass-concentration-inner-point} is obtained. We start with the following energy estimate of $e(a)$ when $a$ is close to $0$.

\begin{lemma}\label{lem:e-a-upper-bound-estimate}
Suppose $V(x)$ satisfies \eqref{eqn:V-potential-x-x-i} and $Z_1\neq\emptyset$, then we have
$$
0\leq e(a)\leq \frac{p+2}{p}\lambda^2a^\frac{p}{p+2}\ \mbox{as}\ a\searrow0,
$$
where $p>0$ and $\lambda>0$ are defined by \eqref{eqn:p-definition} and \eqref{eqn:k-lambda-definition}, respectively.
\end{lemma}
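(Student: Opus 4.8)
The plan is to produce a well-chosen test function $u_\tau$ localized near the flattest inner minimum point $x_0 \in Z_1$ (so $x_0 \in \Omega$ with $p_{i_0} = p$), scaled in a way that matches the expected blow-up rate $\varepsilon_a \sim a^{1/(p+2)}$, and then optimize the scaling parameter. Concretely, since $x_0$ is an interior point, there is $B_{2R}(x_0) \subset \Omega$; I would take a nonnegative cutoff $\varphi \in C_0^\infty(\mathbb{R}^3)$ with $\varphi \equiv 1$ on $B_R$, $\varphi \equiv 0$ outside $B_{2R}$, and set
$$
u_\tau(x) = \frac{A_\tau \tau^{3/2}}{|Q|_2}\,\varphi(x-x_0)\,Q\big(\tau(x-x_0)\big),
$$
with $A_\tau > 0$ chosen so that $\int_\Omega |u_\tau|^2\,\mathrm dx = 1$; by the exponential decay \eqref{eqn:Q-property} of $Q$, $1 \le A_\tau^2 \le 1 + o(e^{-\tau R})$ as $\tau \to \infty$. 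This is essentially the trial function \eqref{eqn:trail-function} already used in the proof of Theorem \ref{thm:existence-nonexistence}, so the gradient and $L^{2+8/3}$ computations carry over verbatim.

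The key computation is the asymptotic expansion of $E_a(u_\tau)$ in $\tau$. First, exactly as in \eqref{eqn:nabla-Kirchhoff-term-power}, the Pohozaev identities \eqref{eqn:Q-Pohozaev-identity} together with the definition $\beta^* = \frac{b}{2}|Q|_2^{8/3}$ make the leading $\tau^4$ terms of $\frac{b}{2}(\int_\Omega |\nabla u_\tau|^2)^2$ and $\frac{3\beta^*}{7}\int_\Omega |u_\tau|^{2+8/3}$ cancel, leaving only $o(e^{-\tau R/2})$. The $a$-term is $a\int_\Omega |\nabla u_\tau|^2\,\mathrm dx = a\tau^2 + o(e^{-\tau R})$. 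For the potential term, I would use \eqref{eqn:V-potential-x-x-i}: after the change of variables $y = \tau(x-x_0)$,
$$
\int_\Omega V(x) u_\tau^2\,\mathrm dx = \frac{A_\tau^2}{|Q|_2^2}\int_{\mathbb{R}^3} V\!\big(x_0 + \tfrac{y}{\tau}\big)\,\varphi^2\!\big(\tfrac{y}{\tau}\big)\,Q^2(y)\,\mathrm dy,
$$
and since $V(x) = h(x)\prod_i |x-x_i|^{p_i}$ with $\kappa_{i_0} = \lim_{x\to x_0} V(x)/|x-x_0|^p$, the integrand behaves like $\tau^{-p}\kappa_{i_0}|y|^p Q^2(y)$ near $y$ fixed, so this term is $\big(1+o(1)\big)\,\kappa_{i_0}\tau^{-p}\,\frac{1}{|Q|_2^2}\int_{\mathbb{R}^3}|y|^p Q^2(y)\,\mathrm dy$ (the exponential decay of $Q$ controls the contribution where $\varphi$ cuts off). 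Collecting terms,
$$
E_a(u_\tau) \le a\tau^2 + \frac{\kappa_{i_0}}{|Q|_2^2}\Big(\int_{\mathbb{R}^3}|y|^p Q^2(y)\,\mathrm dy\Big)\tau^{-p} + o(\tau^{-p}) \quad\text{as } \tau\to\infty.
$$
Minimizing $g(\tau) = a\tau^2 + D\tau^{-p}$ over $\tau > 0$ gives $\tau_* = (pD/2a)^{1/(p+2)}$ and $g(\tau_*) = \frac{p+2}{p}(pD/2)^{p/(p+2)}(2a)^{p/(p+2)}\cdot$const; choosing $\tau = \tau_*$ and recalling the definitions \eqref{eqn:k-i-lambda-idefinition} of $\lambda_{i_0}$ and \eqref{eqn:k-lambda-definition} of $\lambda$ (taking $x_0$ to realize $\lambda = \min\{\lambda_i : x_i \in \Omega\}$, which is legitimate since $Z_1 \neq \emptyset$ means such an interior minimizer exists; one picks the $x_0$ giving the smallest $\lambda_i$), this optimal value is exactly $\frac{p+2}{p}\lambda^2 a^{p/(p+2)}$. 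Combining with $e(a) \ge 0$ from \eqref{eqn:E-lower-bounded-0} finishes the proof.

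The main obstacle — though it is more bookkeeping than conceptual — is controlling the error terms in the potential integral uniformly as $\tau \to \infty$: one must check that replacing $V(x_0 + y/\tau)$ by its leading-order surrogate $\kappa_{i_0}\tau^{-p}|y|^p$ produces only a $o(\tau^{-p})$ error, which requires splitting the integral into a fixed ball $\{|y| \le \tau^{\delta}\}$ (where the ratio $V(x_0+y/\tau)/(\kappa_{i_0}|y/\tau|^p) \to 1$ uniformly, using that $\lim_{x\to x_0} h(x)$ exists and the other factors $|x-x_i|^{p_i}$ are bounded away from $0$ and $\infty$ near $x_0$) and a tail $\{|y| > \tau^\delta\}$ (where the exponential decay of $Q^2$ and the polynomial growth of $V$ make the contribution negligible). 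I would also need to double-check that $\tau_* = (p\kappa_{i_0}\!\int|y|^pQ^2/(2a|Q|_2^2))^{1/(p+2)} \to \infty$ as $a \searrow 0$, which is clear, so that all the $\tau\to\infty$ asymptotics are applicable at $\tau = \tau_*$.
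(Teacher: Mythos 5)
Your proposal is correct and follows essentially the same route as the paper: the same trial function \eqref{eqn:trail-function} centered at an interior point of $Z_1$, the same cancellation of the Kirchhoff and nonlinear terms via \eqref{eqn:Q-Pohozaev-identity} and the definition of $\beta^*$, the same expansion $\int_\Omega V u_\tau^2 = \kappa_i\tau^{-p}|Q|_2^{-2}\int|x|^pQ^2 + o(\tau^{-p})$ (the paper controls the cutoff region with the global bound $V(x)\le M|x-x_i|^p$), and the same optimization $\tau=\lambda_i a^{-1/(p+2)}$.
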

\begin{proof}
Note that $ Z_1\neq\emptyset$ and $\Omega$ is a bounded domain of $\mathbb R^3$, then there exists an open ball $B_{2R}(x_i)\subset\Omega$ centered at an inner point $x_i\in Z_1$ for $i\in\{1,2,\cdots,n\}$, where $R>0$ is sufficiently small. Moreover, there also exists $M>0$ such that
$$
V(x)\leq M|x-x_i|^p\ \mbox{in}\ \Omega,
$$
which yields that
\begin{equation}\label{eqn:V-leq-M-p-x-x-i}
V(\frac{x}{\tau}+x_i)\leq M\tau^{-p}|x|^p\ \mbox{for\ any}\ x\in\Omega_\tau:=\{x\in\mathbb R^3:(\frac{x}{\tau}+x_i)\in\Omega\}.
\end{equation}
Consider a nonnegative cut-off function $\varphi\in C_0^\infty(\mathbb R^3)$ such that $\varphi(x)=1$ for $|x|\leq R$ and $\varphi(x)=0$ for $|x|\geq 2R$ and set a trial function of form \eqref{eqn:trail-function}, i.e.,
$$
u_\tau(x)=\frac{A_\tau\tau^{\frac{3}{2}}}{|Q|_2}\varphi(x-x_i)Q(\tau(x-x_i)),
$$
where $A_\tau$ is determined such that $\int_{\Omega}|u_\tau|^2\mathrm{d}x=1$. In light of the exponential decay of $Q$, we have
$$
1\leq A_\tau^2\leq1+o(e^{-\tau R})\ \mbox{as}\ \tau\rightarrow\infty.
$$
Together with \eqref{eqn:V-leq-M-p-x-x-i}, we infer that
$$
\begin{aligned}
\int_{\Omega}V(x)u_\tau^2\mathrm{d}x
%&=\frac{A^2_\tau\tau^3}{|Q|_2^2}\int_{\Omega}V(x)\varphi^2(x-x_i)Q^2(\tau(x-x_i))\mathrm{d}x\\
&=\frac{A^2_\tau}{|Q|_2^2}\int_{\Omega_\tau}V(\frac{x}{\tau}+x_i)\varphi^2(\frac{x}{\tau})Q^2(x)\mathrm{d}x\\
&=\frac{A^2_\tau}{|Q|_2^2}\int_{|x|\leq \tau R}V(\frac{x}{\tau}+x_i)Q^2(x)\mathrm{d}x+\frac{A^2_\tau}{|Q|_2^2}\int_{\tau R\leq|x|\leq 2\tau R}V(\frac{x}{\tau}+x_i)\varphi^2(\frac{x}{\tau})Q^2(x)\mathrm{d}x\\
&=\frac{A^2_\tau}{|Q|_2^2}\int_{\mathbb R^3}V(\frac{x}{\tau}+x_i)Q^2(x)\mathrm{d}x
-\frac{A^2_\tau}{|Q|_2^2}\int_{|x|\geq\tau R}V(\frac{x}{\tau}+x_i)Q^2(x)\mathrm{d}x\\
&\quad\ +\frac{A^2_\tau}{|Q|_2^2}M\tau^{-p}\int_{\tau R\leq|x|\leq 2\tau R}|x|^pQ^2(x)\mathrm{d}x+o(\tau^{-p})\\
&=\frac{\kappa_i\tau^{-p}}{|Q|_2^2}\int_{\mathbb R^3}|x|^pQ^2(x)\mathrm{d}x+o(\tau^{-p})\ \mbox{as}\ \tau\rightarrow\infty.
\end{aligned}
$$
Furthermore, similar to \eqref{eqn:nabla-Kirchhoff-term-power}, we can conclude that
$$
\begin{aligned}
e(a)&\leq E_a(u_\tau)\\
&=\frac{aA_\tau^2\tau^2}{|Q|_2^2}\int_{\mathbb R^3}|\nabla Q|^2\mathrm{d}x+\frac{bA_\tau^4}{2|Q|_2^4}
\left(\int_{\mathbb R^3}|\nabla Q|^2\mathrm{d}x\right)^2+\int_{\Omega}V(x)u_\tau^2\mathrm{d}x\\
&\quad\ -\frac{3\beta^*A_\tau^{2+\frac{8}{3}}\tau^{4}}{7|Q|_2^{2+\frac{8}{3}}}\int_{\mathbb R^3}|Q|^{2+\frac{8}{3}}\mathrm{d}x+o(\tau^{-p})\\
&=a\tau^2+\frac{\kappa_i\tau^{-p}}{|Q|_2^2}\int_{\mathbb R^3}|x|^pQ^2(x)\mathrm{d}x+o(\tau^{-p})\ \mbox{as}\ \tau\rightarrow\infty.
\end{aligned}
$$
By choosing
$$
\tau=\left(\frac{p\kappa_i}{2|Q|_2^2}\int_{\mathbb{R}^{3}}|x|^pQ^2(x)\mathrm{d}x\right)^{\frac{1}{p+2}}a^{-\frac{1}{p+2}}
=\lambda_ia^{-\frac{1}{p+2}},\ i\in\{1,2,\cdots,n\},
$$
so that $\tau\rightarrow\infty$ as $a\searrow0$,
we derive that the upper bound of $e(a)$ is as follows
$$
e(a)\leq\frac{p+2}{p}\lambda^2a^{\frac{p}{p+2}}\ \mbox{as}\  a\searrow0.
$$
This completes the proof.
\end{proof}

\begin{lemma}\label{lem:z-a-k-mathcal-Z-1}
Suppose $V(x)$ satisfies \eqref{eqn:V-potential-x-x-i} and $Z_1\neq\emptyset$, then for the subsequence $\{a_k\}$ obtained in Lemma {\rm \ref{lem:w-a-k-Q-2-converge-behavior}}, there holds $z_{a_k}\rightarrow x_i$ for some $x_i\in Z_1$.
\end{lemma}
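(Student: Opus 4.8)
By Lemma~\ref{lem:w-a-k-Q-2-converge-behavior} we already know that, along the fixed subsequence, $z_{a_k}\to x_{i_0}\in\bar\Omega$ with $V(x_{i_0})=0$; since $V(x)=h(x)\prod_{j=1}^{n}|x-x_j|^{p_j}$ with $C<h<\frac{1}{C}$, the zero set of $V$ in $\bar\Omega$ is contained in $\{x_1,\dots,x_n\}$, so $x_{i_0}=x_i$ for some $i$. It remains to prove that $x_i\in\Omega$ and $p_i=p$. The engine will be an energy sandwich. On one side, Lemma~\ref{lem:e-a-upper-bound-estimate} gives $e(a_k)\le\frac{p+2}{p}\lambda^2 a_k^{p/(p+2)}$. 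On the other side, writing $\varepsilon_k:=\varepsilon_{a_k}$, $\omega_k:=\omega_{a_k}$, $\Omega_k:=\Omega_{a_k}$ and using $\int_\Omega|\nabla u_{a_k}|^2\mathrm{d}x=\varepsilon_k^{-2}$, one has the exact identity
$$
e(a_k)=a_k\varepsilon_k^{-2}+\varepsilon_k^{-4}\Big(\frac{b}{2}-\frac{3\beta^*}{7}\int_{\Omega_k}|\omega_k|^{2+\frac{8}{3}}\mathrm{d}x\Big)+\int_\Omega V(x)u_{a_k}^2\mathrm{d}x ,
$$
in which the middle term is nonnegative: applying \eqref{eqn:GN-type-inequality} to the zero extension of $\omega_k$ (which has unit $L^2$-norm and unit Dirichlet energy) together with $\beta^*=\frac{b}{2}|Q|_2^{8/3}$ yields $\frac{3\beta^*}{7}\int_{\Omega_k}|\omega_k|^{2+8/3}\mathrm{d}x\le\frac{b}{2}$. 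Thus $e(a_k)$ splits into three nonnegative pieces, and a sufficiently strong lower bound for any of them will contradict the upper bound.

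First I would exclude $x_i\in\partial\Omega$. Suppose $z_{a_k}\to x_i\in\partial\Omega$. Since $\int_{\Omega_k}|\omega_k|^{2+8/3}\mathrm{d}x\to\frac{7}{3}|Q|_2^{-8/3}=\big((\tfrac{7}{3})^3|Q|_2^{-8}\big)^{1/3}$ (a consequence of the limit established in the proof of Lemma~\ref{lem:properties-blow-up}), combining \eqref{eqn:key-estimate} with the tangent-line bound for the concave map $t\mapsto t^{1/3}$ gives $\frac{b}{2}-\frac{3\beta^*}{7}\int_{\Omega_k}|\omega_k|^{2+8/3}\mathrm{d}x\ge C\,e^{-\frac{2}{1+\sigma_k}\cdot\frac{|z_{a_k}-x_i|}{\varepsilon_k}}$ with $\sigma_k\to0$. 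Substituting this and the upper bound into the identity above forces $\varepsilon_k\ge c\,a_k^{1/(p+2)}$ and, using $\varepsilon_k\to0$, also $e^{-\frac{2}{1+\sigma_k}\cdot\frac{|z_{a_k}-x_i|}{\varepsilon_k}}\le C\,a_k^{p/(p+2)}$; taking logarithms yields the genuine separation $|z_{a_k}-x_i|\ge c\,\varepsilon_k\ln\frac{1}{a_k}$, in particular $|z_{a_k}-x_i|/\varepsilon_k\to\infty$. I would then bound the potential term below: with $y_k:=(z_{a_k}-x_i)/\varepsilon_k$, for $x$ in a fixed ball $B_{2R}(0)$ the point $\varepsilon_kx+z_{a_k}$ lies near $x_i$, where $V(y)\ge c_0|y-x_i|^{p_i}$, so $V(\varepsilon_kx+z_{a_k})\ge c_0\varepsilon_k^{p_i}(|y_k|-2R)^{p_i}$; combined with $\liminf_k\int_{B_{2R}(0)\cap\Omega_k}|\omega_k|^2\mathrm{d}x\ge\beta$ from Lemma~\ref{lem:properties-blow-up}-($ii$) this gives $\int_\Omega V(x)u_{a_k}^2\mathrm{d}x\ge c_1|z_{a_k}-x_i|^{p_i}\ge c_2\big(\varepsilon_k\ln\tfrac{1}{a_k}\big)^{p_i}$. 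Minimizing $\varepsilon\mapsto a_k\varepsilon^{-2}+c_2(\ln\tfrac{1}{a_k})^{p_i}\varepsilon^{p_i}$ over $\varepsilon>0$ then forces $e(a_k)\ge c_3\,a_k^{p_i/(p_i+2)}\big(\ln\tfrac{1}{a_k}\big)^{2p_i/(p_i+2)}$, contradicting $e(a_k)\le\frac{p+2}{p}\lambda^2a_k^{p/(p+2)}$: since $t\mapsto\frac{t}{t+2}$ is increasing and $p_i\le p$, the power of $a_k$ is under control, whereas the logarithmic factor diverges. Hence $x_i\in\Omega$.

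Now that $x_i\in\Omega$, interiority gives $B_R(0)\subset\Omega_k$ for every $R$ and all large $k$, so $\Omega_0=\lim_k\Omega_k=\mathbb R^3$ and Lemma~\ref{lem:w-a-k-Q-2-converge-behavior} applies: $\omega_k\to Q/|Q|_2$ in $H^1(\mathbb R^3)\cap L^\infty(\mathbb R^3)$, hence a.e.\ in $\mathbb R^3$. I would then prove $\int_\Omega V(x)u_{a_k}^2\mathrm{d}x\ge c_4\varepsilon_k^{p_i}$: indeed $\varepsilon_k^{-p_i}V(\varepsilon_kx+z_{a_k})=\Big(h(\varepsilon_kx+z_{a_k})\prod_{j\ne i}|\varepsilon_kx+z_{a_k}-x_j|^{p_j}\Big)|x+y_k|^{p_i}$, where the bracket tends to the positive constant $\tilde\kappa_i:=h_i\prod_{j\ne i}|x_i-x_j|^{p_j}$ with $h_i:=\lim_{y\to x_i}h(y)$; if $|y_k|\to\infty$ along a subsequence the scaled integral diverges, while if $|y_k|$ stays bounded then, passing to $y_k\to y^*$, Fatou's lemma and the strict positivity of $Q$ give $\liminf_k\varepsilon_k^{-p_i}\int_\Omega V(x)u_{a_k}^2\mathrm{d}x\ge\tilde\kappa_i\int_{\mathbb R^3}|x+y^*|^{p_i}\frac{Q^2(x)}{|Q|_2^2}\mathrm{d}x>0$. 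Minimizing $\varepsilon\mapsto a_k\varepsilon^{-2}+c_4\varepsilon^{p_i}$ over $\varepsilon>0$ gives $e(a_k)\ge c_5\,a_k^{p_i/(p_i+2)}$, and comparison with the upper bound forces $\frac{p_i}{p_i+2}\ge\frac{p}{p+2}$, i.e.\ $p_i\ge p$; since $p=\max_{1\le j\le n}p_j$ this means $p_i=p$, and therefore $x_i\in Z_1$, which is the claim.

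The technical heart is the boundary exclusion, more precisely converting the exponential gap of Lemma~\ref{lem:similar-to-Guo-Prop-3.1} into the genuine logarithmic separation $|z_{a_k}-x_i|\gtrsim\varepsilon_k\ln\tfrac{1}{a_k}$: this is what makes the potential term strictly dominate and breaks the otherwise delicate scaling balance, and it requires careful bookkeeping of the vanishing error $\sigma_k\to0$ and of the a priori fact $\varepsilon_k\to0$. One must also verify that the local comparison $V(y)\ge c_0|y-x_i|^{p_i}$ holds uniformly for $y=\varepsilon_kx+z_{a_k}$ with $x$ in a fixed ball, which is exactly where the explicit form \eqref{eqn:V-potential-x-x-i} of $V$ and the two-sided bound on $h$ are used. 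A secondary point to watch is the positivity of $\liminf_k\varepsilon_k^{-p_i}\int_\Omega V(x)u_{a_k}^2\mathrm{d}x$ in the last step: the zero of $|x+y_k|^{p_i}$ must not be allowed to absorb all of the concentrating mass, and this is precisely where the uniform convergence $\omega_k\to Q/|Q|_2$ to a strictly positive profile is used rather than merely the weak convergence.
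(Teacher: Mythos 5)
Your proposal is correct, and it rests on the same two pillars as the paper's proof: the upper bound $e(a_k)\le\frac{p+2}{p}\lambda^2a_k^{p/(p+2)}$ of Lemma \ref{lem:e-a-upper-bound-estimate} and, for the boundary case, the exponential estimate of Lemma \ref{lem:similar-to-Guo-Prop-3.1} converted into the energy defect $\frac{b}{2}-\frac{3\beta^*}{7}\int_{\Omega_{a_k}}|\omega_{a_k}|^{2+8/3}\ge Ce^{-\frac{2}{1+\sigma_k}|z_{a_k}-x_i|/\varepsilon_{a_k}}$. Where you genuinely diverge is in the logical order of the boundary exclusion. The paper first proves, via Fatou's lemma with an arbitrarily large constant $M$ (estimate \eqref{eqn:bounded-V-frac-z-a-k-x-i-varepsilon}--\eqref{eqn:contrary-bounded-e-a-V-M}), that $|z_{a_k}-x_{i_0}|/\varepsilon_{a_k}$ stays bounded for \emph{any} limit point $x_{i_0}$; once the ratio is bounded, a boundary limit point makes the defect term of order $\varepsilon_{a_k}^{-4}$, so $e(a_k)\to\infty$ and the contradiction is immediate (see \eqref{eqn:using-guo-prop-3.1-e-a}). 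You instead run the implication the other way: the energy upper bound forces the defect term to be at most $O(a_k^{p/(p+2)})$, hence $|z_{a_k}-x_i|\gtrsim\varepsilon_{a_k}\ln\frac{1}{a_k}$, and then the potential term contributes at least $c\,(\varepsilon_{a_k}\ln\frac{1}{a_k})^{p_i}$, whose optimization in $\varepsilon$ produces a divergent logarithmic factor against $a_k^{p/(p+2)}$. Both are valid; the paper's version is slightly more economical here, while yours anticipates exactly the mechanism the paper deploys later in Lemma \ref{lem:z-a-k-x-i-ln-varepsilon-upper-lower} for the case $Z_1=\emptyset$, and has the minor advantage of not needing the separate ``bounded ratio'' preliminary. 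Your interior step (ruling out $p_i<p$ via Fatou and $\omega_{a_k}\to Q/|Q|_2$, then comparing exponents $p_i/(p_i+2)$ versus $p/(p+2)$) matches the paper's second case essentially verbatim.
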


\begin{proof}
According to Lemma \ref{lem:w-a-k-Q-2-converge-behavior}, we know $z_{a_k}\rightarrow x_{i_0}$ as $k\rightarrow\infty$ and $V(x_{i_0})=0$. Now, we prove that $\left\{\frac{|z_{a_k}-x_{i_0}|}{\varepsilon_{a_k}}\right\}$ is bounded uniformly as $k\rightarrow\infty$. Assume to the contrary that there is a subsequence, still denoted by $\{a_k\}$,  such that $\frac{|z_{a_k}-x_{i_0}|}{\varepsilon_{a_k}}\rightarrow\infty$ as $k\rightarrow\infty$, which indicates $\Omega_0=\lim_{k\rightarrow\infty}\Omega_{a_k}=\mathbb R^3$. Taking into account that \eqref{eqn:omega-a-2-beta-strict-0}, we deduce that for any large constant $M>0$,
\begin{equation}\label{eqn:bounded-V-frac-z-a-k-x-i-varepsilon}
\begin{aligned}
&\quad\liminf\limits_{\varepsilon_{a_k}\rightarrow0}\frac{1}{\varepsilon_{a_k}^{p_{i_0}}}
\int_{\mathbb R^3}V(\varepsilon_{a_k}x+z_{a_k})|\omega_{a_k}|^2\mathrm{d}x\\
&\geq C\int_{B_{2R}(0)\cap\Omega_{a_k}}\liminf\limits_{\varepsilon_{a_k}\rightarrow0}\Big|x
+\frac{z_{a_k}-x_{i_0}}{\varepsilon_{a_k}}\Big|^{p_{i_0}}
\prod\limits_{j=1,j\neq i_0}^n|\varepsilon_{a_k}x+z_{a_k}-x_j|^{p_j}|\omega_{a_k}|^2\mathrm{d}x\\
&\geq M,
\end{aligned}
\end{equation}
where $p_{i_0}\in(0,p]$ is the exponent corresponding to $x_{i_0}$ in \eqref{eqn:V-potential-x-x-i}.
Thus it follows from \eqref{eqn:GN-type-inequality} that
\begin{equation}\label{eqn:contrary-bounded-e-a-V-M}
\begin{aligned}
e(a_k)&\geq a_k\varepsilon_{a_k}^{-2}+M\varepsilon_{a_k}^{p_{i_0}}\\
&\geq
\Big(\left(\frac{p_{i_0}}{2}\right)^{\frac{2}{p_{i_0}+2}}+\left(\frac{2}{p_{i_0}}\right)^{\frac{p_{i_0}}{p_{i_0}+2}}\Big)
M^{\frac{2}{p_{i_0}+2}}a_k^{\frac{p_{i_0}}{p_{i_0}+2}}\ \mbox{as}\ a_k\searrow0,
\end{aligned}
\end{equation}
which leads to a contradiction with Lemma \ref{lem:e-a-upper-bound-estimate} since $M$ is arbitrary large.

In light of the above argument, we can exclude the following two cases.
 On the one hand, it is impossible that $z_{a_k}\rightarrow x_{i_0}$ for some $x_{i_0}\in\Omega$ and $p_{i_0}<p$.
If this case  occurs, it follows from $x_{i_0}$ is an inner point of $\Omega$ that $\Omega_0=\lim\limits_{k\rightarrow\infty}\Omega_{a_k}=\mathbb R^3$. Then Lemma \ref{lem:w-a-k-Q-2-converge-behavior} implies that
$$
\lim\limits_{k\rightarrow\infty}\omega_{a_k}(x)=\frac{Q(|x|)}{|Q|_2}\ \mbox{in}\ H^1(\mathbb R^3).
$$
Since $\left\{\frac{|z_{a_k}-x_{i_0}|}{\varepsilon_{a_k}}\right\}$ is bounded uniformly as $k\rightarrow\infty$, we can deduce that for  $R>0$ small enough, there exists $C_0(R)>0$, independent of $a_k$, such that
$$
\begin{aligned}
&\quad\liminf\limits_{\varepsilon_{a_k}\rightarrow0}\frac{1}{\varepsilon_{a_k}^{p_{i_0}}}
\int_{\mathbb R^3}V(\varepsilon_{a_k}x+z_{a_k})|\omega_{a_k}|^2\mathrm{d}x\\
&\geq C\int_{B_{R}(0)}\liminf\limits_{\varepsilon_{a_k}\rightarrow0}\Big|x+\frac{z_{a_k}-x_{i_0}}{\varepsilon_{a_k}}\Big|^{p_{i_0}}
\prod\limits_{j=1,j\neq i_0}^n|\varepsilon_{a_k}x+z_{a_k}-x_j|^{p_j}|\omega_{a_k}|^2\mathrm{d}x\\
&\geq C_0(R),
\end{aligned}
$$
which gives that
$$
e(a_k)\geq a_k\varepsilon_{a_k}^{-2}+C_0(R)\varepsilon_{a_k}^{p_{i_0}}
\geq\Big(\left(\frac{p_{i_0}}{2}\right)^{\frac{2}{p_{i_0}+2}}+\left(\frac{2}{p_{i_0}}\right)^{\frac{p_{i_0}}{p_{i_0}+2}}\Big)
(C_0(R))^{\frac{2}{p_{i_0}+2}}a_k^{\frac{p_{i_0}}{p_{i_0}+2}}\ \mbox{as}\ a_k\searrow0.
$$
This contradicts Lemma \ref{lem:e-a-upper-bound-estimate}.

On the other hand,  it should be rule out the possibility that $z_{a_k}\rightarrow x_{i_0}$ as $k\rightarrow\infty$ for some $x_{i_0}\in\partial\Omega$.
Assuming this possibility arises, we can infer from Lemma \ref{lem:similar-to-Guo-Prop-3.1} that there exists a sequence $\{\sigma_k\}$
satisfying $\sigma_k\rightarrow0$ as $k\rightarrow\infty$ such that
$$
\frac{\left(\int_{\Omega_{a_k}}|\omega_{a_k}|^{2+\frac{8}{3}}\mathrm{d}x\right)^3}
{\left(\int_{\Omega_{a_k}}|\nabla\omega_{a_k}|^2\mathrm{d}x\right)^6}
%\leq\left(\left(\frac{7}{6}\right)^{-7}-\left(\frac{7}{6}\right)^{-6}\right)^{-1}\cdot(-\bar C+\bar C\inf\limits_{\Phi\in\mathcal G}||\omega_{a_k}-\Phi||^2).
\leq\left(\frac{7}{3}\right)^{3}|Q|_2^{-8}- Ce^{-\frac{2}{1+\sigma_k}\cdot\frac{|z_{a_k}-x_i|}{\varepsilon_{a_k}}}
\ \mbox{as}\ k\rightarrow\infty.
$$
According to $\int_{\Omega_{a_k}}|\nabla\omega_{a_k}|^2\mathrm{d}x=1,$ $\varepsilon_{a_k}^{-4}\rightarrow\infty$ and $\left\{\frac{|z_{a_k}-x_{i_0}|}{\varepsilon_{a_k}}\right\}$ is bounded uniformly as $k\rightarrow\infty$, we deduce that
\begin{equation}\label{eqn:using-guo-prop-3.1-e-a}\small
\begin{aligned}
e(a_k)=E_a(u_{a_k})
&\geq\frac{a_k}{\varepsilon_{a_k}^2}\int_{\Omega_{a_k}}|\nabla\omega_{a_k}|^2\mathrm{d}x
+\frac{b}{2\varepsilon_{a_k}^4}\left(\int_{\Omega_{a_k}}|\nabla\omega_{a_k}|^2\mathrm{d}x\right)^2
-\frac{3\beta^*}{7\varepsilon_{a_k}^4}\int_{\Omega_{a_k}}|\omega_{a_k}|^{2+\frac{8}{3}}\mathrm{d}x\\
&\geq\frac{a_k}{\varepsilon_{a_k}^2}
+\frac{1}{\varepsilon_{a_k}^4}\left[\frac{b}{2}
-\frac{3\beta^*}{7}\left(\left(\frac{7}{3}\right)^3|Q|_2^{-8}- Ce^{-\frac{2}{1+\sigma_k}\frac{|z_{a_k}-x_{i_0}|}{\varepsilon_{a_k}}}
\right)^{\frac{1}{3}}\right]\\
&\geq C\varepsilon_{a_k}^{-4}e^{-\frac{2}{1+\sigma_k}\frac{|z_{a_k}-x_{i_0}|}{\varepsilon_{a_k}}}\rightarrow\infty
\ \mbox{as}\ k\rightarrow\infty,
\end{aligned}
\end{equation}
where we have used the definition of $\beta^*$ and the inequality $(1-t)^{\frac{1}{3}}\leq1-\frac{1}{3}t$ for $0\leq t\leq 1$. This is a contradiction with Lemma \ref{lem:e-a-upper-bound-estimate}. Consequently, we have that $z_{a_k}\rightarrow x_{i}$ as $k\rightarrow\infty$ for some $x_{i}\in Z_1$ and the proof is complete.
\end{proof}

\noindent\textbf{Proof of Theorem \ref{thm:mass-concentration-inner-point}.} In view of the proof of Lemma \ref{lem:z-a-k-mathcal-Z-1}, we know that $\left\{\frac{|z_{a_k}-x_i|}{\varepsilon_{a_k}}\right\}$ is bounded uniformly as $k\rightarrow\infty$, then we can suppose that $\frac{z_{a_k}-x_i}{\varepsilon_{a_k}}\rightarrow y_0$ as $k\rightarrow\infty$ form some $y_0\in\mathbb R^3$, where $x_i\in Z_1$. Note that $ Z_1\subset\Omega$, then $\Omega_0=\lim_{k\rightarrow\infty}\Omega_{a_k}=\mathbb R^3$. Thus, by Fatou's lemma and \eqref{eqn:omega-0-Q-2}, we deduce that
\begin{equation}\label{eqn:x-y-0-p-V}
\begin{aligned}
&\quad\ \liminf\limits_{\varepsilon_{a_k}\rightarrow0}\frac{1}{\varepsilon_{a_k}^p}
\int_{\Omega_{a_k}}V(\varepsilon_{a_k}x+z_{a_k})\omega_{a_k}^2\mathrm{d}x\\
&\geq \kappa_i\int_{\mathbb R^3}|x+y_0|^p\omega_0^2\mathrm{d}x\geq\kappa_i\int_{\mathbb R^3}|x|^p\omega_0^2\mathrm{d}x
=\frac{\kappa_i}{|Q|_2^2}\int_{\mathbb R^3}|x|^pQ^2(|x|)\mathrm{d}x,
\end{aligned}
\end{equation}
where $\kappa_i$ is defined by \eqref{eqn:k-i-lambda-idefinition}. Then we get that
$$
\begin{aligned}
\liminf\limits_{a_k\searrow0}e(a_k)
&\geq\frac{a_k}{\varepsilon^2_{a_k}}
+\int_{\Omega_{a_k}}V(\varepsilon_{a_k}x+z_{a_k})\omega_{a_k}^2\mathrm{d}x\\
&\geq\frac{a_k}{\varepsilon_{a_k}^2}+\varepsilon_{a_k}^p\frac{\kappa_i}{|Q|_2^2}\int_{\mathbb R^3}|x|^pQ^2(|x|)\mathrm{d}x\\
&=\frac{a_k}{\varepsilon_{a_k}^2}+\frac{2}{p}\lambda_i^{p+2}\varepsilon_{a_k}^p
\geq\frac{p+2}{p}\lambda_i^2a_k^{\frac{p}{p+2}},
\end{aligned}
$$
where $i\in\{1,2,\cdots,n\}$. Naturally,
$$
\liminf\limits_{a_k\searrow0}e(a_k)\geq\frac{p+2}{p}\lambda^2a_k^{\frac{p}{p+2}},
$$
which, together with Lemma \ref{lem:e-a-upper-bound-estimate} and \eqref{eqn:x-y-0-p-V}, implies that
$$
\lim\limits_{k\rightarrow\infty}\frac{e(a_k)}{a_k^{\frac{p}{p+2}}}
=\frac{p+2}{p}\lambda^2,\ \
\lim\limits_{k\rightarrow\infty}\frac{\varepsilon_{a_k}}{(a_k)
^{\frac{1}{p+2}}}=\frac{1}{\lambda}
$$
and $y_0=0$, i.e., $\lim_{k\rightarrow\infty}\frac{|z_{a_k}-x_i|}{\varepsilon_{a_k}}=0$. Thus, combining with Lemma \ref{lem:w-a-k-Q-2-converge-behavior},
we can conclude that
$$
\lambda^{-\frac{3}{2}}a_k^{\frac{3}{2(p+2)}}u_{a_k}({\lambda^{-1}}a_k^{\frac{1}{p+2}}x
+z_{a_k})\rightarrow\frac{Q(|x|)}{|Q|_2}
\ \mbox{in}\ H^1(\mathbb R^3)\ \mbox{as}\ k\rightarrow\infty,
$$
where $z_{a_k}\rightarrow x_i\in Z_1$ as $k\rightarrow\infty$. The proof of Theorem \ref{thm:mass-concentration-inner-point} is complete.

\section{Mass concentration at a boundary point}\label{sec:Mass-concentration-near-the-boundary}

In this section, we consider the case of $ Z_1=\emptyset$.
We determine that the positive minimizers for $e(a)$ will concentrate near the boundary of $\Omega$ and accomplish the proof of Theorem \ref{thm:mass-concentration-boundary}. First, we establish the following delicate estimate of $e(a)$ as $a\searrow0$
inspired by \cite{Guo-Luo-Zhang-2018}.

\begin{lemma}\label{lem:boundary-e-a-upper-bounded-eatimate}
Suppose that $V(x)$ satisfies \eqref{eqn:V-potential-x-x-i} and $ Z_1=\emptyset$, then
$$
0\leq e(a)\leq \kappa^{\frac{2}{p+2}}\left(\frac{p+4}{2(p+2)}\right)^{\frac{2p}{p+2}}
\Big[\Big(\frac{p}{2}\Big)^{\frac{2}{p+2}}+\Big(\frac{2}{p}\Big)^{\frac{p}{p+2}}\Big]a^{\frac{p}{p+2}}
\Big(\ln\frac{1}{a}\Big)^{\frac{2p}{p+2}}\ \mbox{as}\ a\searrow0,
$$
where $p>0$ and $\kappa>0$ are defined by \eqref{eqn:p-definition} and \eqref{eqn:k-lambda-definition}, respectively.
\end{lemma}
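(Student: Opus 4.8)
The plan is to exhibit a test function concentrating, at a carefully tuned depth, near a flattest boundary minimum of $V$, and then to optimise the resulting energy. Since $Z_1=\emptyset$ and $V(x)/|x-x_i|^p\to+\infty$ as $x\to x_i$ whenever $p_i<p$, the number $\kappa$ in \eqref{eqn:k-lambda-definition} is attained at some $x_1\in Z_0\subset\partial\Omega$, so we may assume $\kappa_1=\kappa$. Let $B_{r_0}(y_0)\subset\Omega$ be the ball given by the interior ball condition with $x_1\in\partial B_{r_0}(y_0)$, and set $\vec n:=(x_1-y_0)/r_0$; for $0<t<r_0$ one has $B_t(x_1-t\vec n)\subset B_{r_0}(y_0)\subset\Omega$ (the two balls being internally tangent at $x_1$). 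For small $\varepsilon>0$ I would put
\[
t_\varepsilon:=\frac{p+4}{2}\,\varepsilon\ln\frac1\varepsilon,\qquad x_\varepsilon:=x_1-t_\varepsilon\vec n\in\Omega .
\]

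Next I would take a nonincreasing $\varphi\in C^\infty(\mathbb R)$ with $\varphi\equiv1$ on $(-\infty,0]$ and $\varphi\equiv0$ on $[1,\infty)$, and use
\[
u_\varepsilon(x):=\frac{A_\varepsilon}{\varepsilon^{3/2}|Q|_2}\,
\varphi\!\left(\frac{|x-x_\varepsilon|-(t_\varepsilon-2\varepsilon)}{\varepsilon}\right)
Q\!\left(\frac{x-x_\varepsilon}{\varepsilon}\right),
\]
with $A_\varepsilon>0$ normalising $\int_\Omega u_\varepsilon^2=1$; then $\operatorname{supp}u_\varepsilon\subset \overline{B_{t_\varepsilon-\varepsilon}(x_\varepsilon)}\subset\Omega$, so $u_\varepsilon\in H_0^1(\Omega)$. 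From the exponential decay \eqref{eqn:Q-property} of $Q$ one gets $A_\varepsilon^2=1+O(e^{-2t_\varepsilon/\varepsilon})=1+O(\varepsilon^{p+4})$; scaling together with the Pohozaev identities \eqref{eqn:Q-Pohozaev-identity} gives $\int_\Omega|\nabla u_\varepsilon|^2=\varepsilon^{-2}\big(1+O(\varepsilon^{p+4})\big)$, the error collecting both the normalisation defect and the gradient created on the transition shell $\{t_\varepsilon-2\varepsilon\le|x-x_\varepsilon|\le t_\varepsilon-\varepsilon\}$ — on that shell $Q((x-x_\varepsilon)/\varepsilon)=O(e^{-t_\varepsilon/\varepsilon})$ while $|\nabla\varphi|\sim\varepsilon^{-1}$, so this gradient is $O(\varepsilon^{-2}e^{-2t_\varepsilon/\varepsilon})=O(\varepsilon^{p+2})$. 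As in the proof of Lemma \ref{lem:e-a-upper-bound-estimate}, the definition $\beta^*=\tfrac b2|Q|_2^{8/3}$ and \eqref{eqn:Q-Pohozaev-identity} make the $\varepsilon^{-4}$ leading parts of $\tfrac b2\big(\int|\nabla u_\varepsilon|^2\big)^2$ and $\tfrac{3\beta^*}7\int|u_\varepsilon|^{2+\frac83}$ cancel, so that $\tfrac b2\big(\int_\Omega|\nabla u_\varepsilon|^2\big)^2-\tfrac{3\beta^*}7\int_\Omega|u_\varepsilon|^{2+\frac83}=O\big(\varepsilon^{-4}e^{-2t_\varepsilon/\varepsilon}\big)=O(\varepsilon^p)$. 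Finally, since $V(y)/|y-x_1|^p\to\kappa$ as $y\to x_1$ by \eqref{eqn:V-potential-x-x-i}–\eqref{eqn:k-i-lambda-idefinition} and the mass of $u_\varepsilon$ sits within $O(\varepsilon)=o(t_\varepsilon)$ of $x_\varepsilon$, with $|x_\varepsilon-x_1|=t_\varepsilon$, a routine splitting gives $\int_\Omega V u_\varepsilon^2=\kappa t_\varepsilon^p\,(1+o(1))$. Collecting these,
\[
E_a(u_\varepsilon)=\frac a{\varepsilon^2}+\kappa\Big(\frac{p+4}{2}\Big)^p\varepsilon^p\Big(\ln\frac1\varepsilon\Big)^p(1+o(1))\qquad\text{as }\varepsilon\to0 .
\]

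The choice $t_\varepsilon/\varepsilon=\tfrac{p+4}2\ln\tfrac1\varepsilon$ is exactly what makes the remainder $O(\varepsilon^p)$ negligible against $\kappa t_\varepsilon^p=\kappa(\tfrac{p+4}2)^p\varepsilon^p(\ln\tfrac1\varepsilon)^p$ — the $4$ coming from the $\varepsilon^{-4}$ of the Kirchhoff term — and $t_\varepsilon$ is taken as small as this permits, since $t\mapsto\kappa t^p$ is increasing. It then remains to minimise the right-hand side over $\varepsilon>0$: with $B':=\kappa\big(\tfrac{p+4}{2(p+2)}\big)^p$, the substitution
\[
\varepsilon=\Big(\tfrac{2^{p+1}}{p\kappa(p+4)^p}\Big)^{\frac1{p+2}}(p+2)^{\frac p{p+2}}\,a^{\frac1{p+2}}\Big(\ln\tfrac1a\Big)^{-\frac p{p+2}}
\]
(for which $\ln\tfrac1\varepsilon=\tfrac1{p+2}\ln\tfrac1a\,(1+o(1))$), combined with the elementary identity $\min_{s>0}\big(s^{-2}+B's^{p}\big)=B'^{\,2/(p+2)}\big[(\tfrac p2)^{2/(p+2)}+(\tfrac2p)^{p/(p+2)}\big]$, yields precisely the bound in the statement; this $\varepsilon$ is the scale $\varepsilon_{a_k}$ of Theorem \ref{thm:mass-concentration-boundary}.

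The scalar optimisation is routine; the real work is the construction and the estimates above. I expect the main obstacle to be the bookkeeping of the exponentially small boundary corrections, and in particular the placement of the cut-off: it must live in an $O(\varepsilon)$-wide shell at distance $t_\varepsilon-O(\varepsilon)$ from $x_\varepsilon$ — a narrower shell blows up $\int|\nabla\varphi|^2u_\varepsilon^2$, while cutting off further inside enlarges the truncated tail of $Q$ — so that the gradient lost on the shell is $O(\varepsilon^{p+2})$ and the residual it leaves in the Kirchhoff-minus-nonlinearity combination is only $O(\varepsilon^p)$, both $o\big(\varepsilon^p(\ln\tfrac1\varepsilon)^p\big)$. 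One must also use the interior ball condition to ensure this concentrated bump at depth $t_\varepsilon$ genuinely lies in $H_0^1(\Omega)$; the presence of the other minima $x_j$ enters only through the limiting constant $\kappa_1=\kappa$ and is harmless, as they stay at positive distance from $x_1$.
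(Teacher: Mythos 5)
Your proposal is correct and follows essentially the same route as the paper: a truncated, rescaled copy of $Q$ centered at depth $\frac{p+4}{2}\varepsilon|\ln\varepsilon|$ inside the interior tangent ball at a point of $Z_0$ realizing $\kappa$, with the Kirchhoff and nonlinear terms cancelling to $O(\varepsilon^{p})$ via $\beta^*$ and \eqref{eqn:Q-Pohozaev-identity}, the potential term contributing $\kappa\bigl(\tfrac{p+4}{2}\bigr)^p\varepsilon^p(\ln\tfrac1\varepsilon)^p(1+o(1))$, and a final scalar optimization in $\varepsilon$ exactly as in Remark \ref{rem:estimate-details}. The only differences are cosmetic (parametrizing by $\varepsilon$ rather than $\tau=1/\varepsilon$, and a cut-off shell of width $\varepsilon$ instead of the paper's wider shell of width $\eta(\tau)R_\tau$, both of which you correctly verify produce negligible errors).
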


\begin{proof}
Due to $ Z_1=\emptyset$, we know that $ Z_0\neq\emptyset$. Then we can choose $x_i\in Z_0$ such that $p_i=p$. According to the inner ball condition of $\Omega$, there exists an open ball $B_R(x_0)$ such that $x_i\in\partial\Omega\cap\partial B_R(x_0)$, where $x_0$ is an inner point of $\Omega$ and $R>0$ is sufficiently small. Let
$R_\tau:=\frac{g(\tau)}{\tau}<R$ for $\tau>0$ large enough, where $0<g(\tau)\in C^2(\mathbb R)$ is to be determined later and satisfies $\lim_{\tau\rightarrow\infty}g(\tau)=\infty$ and $\lim_{\tau\rightarrow\infty}\frac{g(\tau)}{\tau}=0$. Take $x_\tau=x_i-(1+\eta(\tau))R_\tau\overrightarrow{n}$, where $\overrightarrow{n}$ is the unit outward normal vector to $\partial\Omega$ at the point $x_i$ and $\eta(\tau)>0$ satisfies $\eta(\tau)\rightarrow0$ as $\tau\rightarrow\infty$ is also to be determined later. Then we can directly obtain $B_{(1+\eta(\tau))R_\tau}(x_\tau)\subset\Omega$ and $x_i\in\partial\Omega\cap B_{(1+\eta(\tau))R_\tau}(x_\tau)$ satisfies $\lim_{\tau\rightarrow\infty}x_\tau=x_i$.

Let $\varphi_\tau(x)\in C_0^\infty(\mathbb R^3)$ be a nonnegative smooth cut-off function satisfying $\varphi_\tau(x)=1$ for $|x|\leq 1$ and $\varphi_\tau(x)=0$ for $|x|\geq 1+\eta(\tau)$. Without loss of generality, we suppose that $|\nabla\varphi(x)|\leq\frac{M}{\eta(\tau)}$, where $M>0$ is independent of $\tau$. Consider the following trial function
$$
\psi_\tau(x)=\frac{A_\tau\tau^{\frac{3}{2}}}{|Q|_2^2}\varphi_\tau\left(\frac{x-x_\tau}{R_\tau}\right)Q(\tau(x-x_\tau)),\ x\in\Omega,
$$
where $A_\tau>0$ is chosen so that $\int_\Omega|\psi_\tau(x)|^2\mathrm{d}x=1$. Set
$\Omega_\tau=\{x\in\mathbb R^3:(\frac{x}{\tau}+x_\tau)\in\Omega\}$, then the definition of $x_\tau$ implies that $\Omega_\tau\rightarrow\mathbb R^3$ as $\tau\rightarrow\infty$. Moreover, we can compute that
$$
\begin{aligned}
\frac{1}{A_\tau^2}
%&=\int_\Omega\frac{\tau^2}{|Q|_2^2}\varphi_\tau^2\Big(\frac{x-x_\tau}{R_\tau}\Big)Q^2(\tau(x-x_\tau))\mathrm{d}x\\
&=\int_{\Omega_\tau}\frac{1}{|Q|_2^2}\varphi_\tau^2\Big(\frac{x}{\tau R_\tau}\Big)Q^2(x)\mathrm{d}x\\
&=\int_{\mathbb R^3}\frac{1}{|Q|_2^2}\Big(\varphi_\tau^2\Big(\frac{x}{\tau R_\tau}\Big)-1\Big)Q^2(x)\mathrm{d}x+1\\
&\geq 1-Ce^{-2\tau R_\tau}\ \mbox{as}\ \tau\rightarrow\infty,
\end{aligned}
$$
which indicates that
\begin{equation}\label{eqn:A-tau-2-tau-R-tau}
1\leq A_\tau^2\leq1+Ce^{-2\tau R_\tau}\ \mbox{as}\ \tau\rightarrow\infty.
\end{equation}
Meanwhile, we have
$$
\begin{aligned}
&\quad\ \int_{\Omega}|\nabla\psi_\tau|^2\mathrm{d}x\\
&=\int_\Omega\frac{A_\tau^2\tau^3}{|Q|_2^2R_\tau^2}
\Big|\nabla\varphi_\tau\Big(\frac{x-x_\tau}{R_\tau}\Big)\Big|^2Q^2(\tau(x-x_\tau))\mathrm{d}x
+\int_\Omega\frac{A_\tau^2\tau^5}{|Q|_2^2}\varphi_\tau^2\Big(\frac{x-x_\tau}{R_\tau}\Big)|\nabla Q(\tau(x-x_\tau))|^2\mathrm{d}x\\
&\quad\ +2\int_\Omega\frac{A_\tau^2\tau^4}{|Q|_2^2R_\tau}\varphi_\tau\Big(\frac{x-x_\tau}{R_\tau}\Big)Q(\tau(x-x_\tau))
\nabla\varphi_\tau\Big(\frac{x-x_\tau}{R_\tau}\Big)\nabla Q(\tau(x-x_\tau))\mathrm{d}x\\
&:=I_\tau^1+I_\tau^2+I_\tau^3.
\end{aligned}
$$
The direct computations show that
$$
\begin{aligned}
I_\tau^1&=\int_{\mathbb R^3}\frac{A_\tau^2\tau^3R_\tau}{|Q|_2^2}|\nabla\varphi_\tau(x)|^2Q^2(\tau R_\tau x)\mathrm{d}x\\
&=\int_{1\leq|x|\leq1+\eta(\tau)}\frac{A_\tau^2\tau^3R_\tau}{|Q|_2^2}|\nabla\varphi_\tau(x)|^2Q^2(\tau R_\tau x)\mathrm{d}x\\
&\leq\frac{C}{\eta^2(\tau)R_\tau^2}e^{-2\tau R_\tau}\ \mbox{as}\ \tau\rightarrow\infty,
\end{aligned}
$$

$$
\begin{aligned}
I_\tau^2&=\int_{\mathbb R^3}\frac{A_\tau^2\tau^2}{|Q|_2^2}\Big(\varphi_\tau^2\Big(\frac{x}{\tau R_\tau}\Big)-1\Big)|\nabla Q(x)|^2\mathrm{d}x+\int_{\mathbb R^3}\frac{A_\tau^2\tau^2}{|Q|_2^2}|\nabla Q(x)|^2\mathrm{d}x\\
&\leq\int_{\mathbb R^3}\frac{A_\tau^2\tau^2}{|Q|_2^2}|\nabla Q(x)|^2\mathrm{d}x\ \mbox{as}\ \tau\rightarrow\infty
\end{aligned}
$$
and
$$
\begin{aligned}
I_\tau^3&=2\int_{\mathbb R^3}\frac{A_\tau^2\tau^4R_\tau^2}{|Q|_2^2}\varphi_\tau(x)Q(\tau R_\tau x)\nabla\varphi_\tau(x)\nabla Q(\tau R_\tau x)\mathrm{d}x\\
&\leq\frac{2MA_\tau^2\tau^4 R_\tau^2}{\eta(\tau)|Q|_2^2}\int_{1\leq|x|\leq1+\eta(\tau)}Q(\tau R_\tau x)
|\nabla Q(\tau R_\tau x)|\mathrm{d}x\\
&\leq\frac{C\tau}{\eta(\tau)R_\tau}e^{-2\tau R_\tau}\ \mbox{as}\ \tau\rightarrow\infty.
\end{aligned}
$$
Thus, we get that
$$
\quad\ \int_{\Omega}|\nabla\psi_\tau|^2\mathrm{d}x
\leq \int_{\mathbb R^3}\frac{A_\tau^2\tau^2}{|Q|_2^2}|\nabla Q(x)|^2\mathrm{d}x
+C\Big(\frac{\tau}{R_\tau\eta(\tau)}+\frac{1}{\eta^2(\tau)R_\tau^2}\Big)e^{-2\tau R_\tau}\ \mbox{as}\ \tau\rightarrow\infty.
$$

Now we choose a suitable function $ \eta(\tau)>0$ (for example, $ \eta(\tau)=(g(\tau))^{-\alpha}$, $0<\alpha<1$) so that
$$
\frac{\tau}{R_\tau\eta(\tau)}=o(\tau^2)\ \mbox{and}\ \frac{1}{R_\tau^2\eta^2(\tau)}=o(\tau^2)\ \mbox{as}\ \tau\rightarrow\infty,
$$
which, together with \eqref{eqn:A-tau-2-tau-R-tau}, gives that
$$
\int_{\Omega}|\nabla\psi_\tau|^2\mathrm{d}x\leq\tau^2+C\tau^2e^{-2\tau R_\tau}\ \mbox{as}\ \tau\rightarrow\infty.
$$
Then
$$
\left(\int_\Omega|\nabla\psi_\tau|^2\mathrm{d}x\right)^2\leq\tau^4+C\tau^4e^{-2\tau R\tau}\ \mbox{as}\ \tau\rightarrow\infty.
$$
Analogously, we can infer that
$$
\begin{aligned}
\int_\Omega|\psi_\tau|^{2+\frac{8}{3}}\mathrm{d}x
&=\int_{\Omega}\frac{A_\tau^{2+\frac{8}{3}}\tau^7}{|Q|_2^{2+\frac{8}{3}}}
\varphi_\tau^{2+\frac{8}{3}}\Big(\frac{x-x_\tau}{R_\tau}\Big)Q^{2+\frac{8}{3}}(\tau(x-x_\tau))\mathrm{d}x\\
&=\int_{\mathbb R^3}\Bigg(\frac{A_\tau^{2+\frac{8}{3}}\tau^7}{|Q|_2^{2+\frac{8}{3}}}\varphi_\tau^{2+\frac{8}{3}}\Big(\frac{x}{\tau R_\tau}\Big)-1\Bigg)Q^{2+\frac{8}{3}}(x)\mathrm{d}x+\frac{7A_\tau^{2+\frac{8}{3}}\tau^7}{3|Q|_2^{\frac{8}{3}}}\\
&=\int_{|x|\geq1}\frac{A_\tau^{2+\frac{8}{3}}\tau^{7}R_\tau^3}{|Q|_2^{2+\frac{8}{3}}}
(\varphi_\tau^{2+\frac{8}{3}}(x)-1)Q^{2+\frac{8}{3}}
(\tau R_\tau x)\mathrm{d}x+\frac{7A_\tau^{2+\frac{8}{3}}\tau^7}{3|Q|_2^{\frac{8}{3}}}\\
&\geq\frac{7\tau^4}{3|Q|_2^{\frac{8}{3}}}-o(\tau^4e^{-2\tau R_\tau})\ \mbox{as}\ \tau\rightarrow\infty.
\end{aligned}
$$
Consequently, in light of the above estimates, we have
\begin{equation}\label{eqn:nabla-term-estimate}
\begin{aligned}
&\quad\ a\int_\Omega|\nabla\psi_\tau|^2\mathrm{d}x+\frac{b}{2}\left(\int_\Omega|\nabla\psi_\tau|^2\mathrm{d}x\right)^2
-\frac{3\beta^*}{7}\int_\Omega|\psi_\tau|^{2+\frac{8}{3}}\mathrm{d}x\\
&\leq a\tau^2+C\tau^4e^{-2\tau R_\tau}\ \mbox{as}\ \tau\rightarrow\infty.
\end{aligned}
\end{equation}
On the other hand,
since $V(x)$ satisfies \eqref{eqn:V-potential-x-x-i} and $\Omega$ is a bounded domain, then there exists $M>0$ such that
$$
V(x)\leq M|x-x_i|^p\ \mbox{in}\ \Omega,
$$
which indicates that
\begin{equation}\label{eqn:V-M-x-tau-x-i-p}
V(\frac{x}{\tau}+x_\tau)\leq M\Big|\frac{x}{\tau}+x_\tau-x_i\Big|^p\ \mbox{for\ any}\ x\in\Omega_\tau.
\end{equation}
Take
$$
g(\tau)=\tau R_\tau=\frac{p+4}{2}\ln\tau,
$$
then it obviously satisfies the previous setting and indicates that
\begin{equation}\label{eqn:o-ln-tau-tau}
C\tau^4e^{-2\tau R_\tau}=C\tau^{-p}=o\Big(\Big(\frac{\ln\tau}{\tau}\Big)^p\Big)\ \mbox{as}\ \tau\rightarrow\infty.
\end{equation}
Thus, based on the definition of $x_\tau$ and \eqref{eqn:V-M-x-tau-x-i-p},
it follows from the direct calculations that
$$
\begin{aligned}
\int_{\Omega}V(x)|\psi_\tau|^2\mathrm{d}x
&=\int_{\Omega_\tau}\frac{A_\tau^2}{|Q|_2^2}V\Big(\frac{x}{\tau}+x_\tau\Big)
\varphi_\tau^2\Big(\frac{2x}{(p+4)\ln\tau}\Big)Q^2(x)\mathrm{d}x\\
&=\int_{B_{\sqrt{\ln\tau}}(0)}\frac{A_\tau^2}{|Q|_2^2}V\Big(\frac{x}{\tau}+x_\tau\Big)Q^2(x)\mathrm{d}x\\
&\quad\ +\int_{B_{\frac{(1+\eta(\tau))(p+4)\ln\tau}{2}}(0)\backslash B_{\sqrt{\ln\tau}}(0)}\frac{A_\tau^2}{|Q|_2^2}V\Big(\frac{x}{\tau}+x_\tau\Big)
\varphi_\tau^2\Big(\frac{2x}{(p+4)\ln\tau}\Big)Q^2(x)\mathrm{d}x\\
&\leq\int_{B_{\frac{\sqrt{\ln\tau}}{\tau}}(x_\tau)}\frac{A_\tau^2
\tau^3}{|Q|_2^2}V(x)Q^2(\tau(x-x_\tau))\mathrm{d}x\\
&\quad\ +\int_{B_{\frac{(1+\eta(\tau))(p+4)\ln\tau}{2}}(0)\backslash B_{\sqrt{\ln\tau}}(0)}\frac{MA_\tau^2}{|Q|_2^2}
\Big|\frac{x}{\tau}+x_\tau-x_i\Big|^pQ^2(x)\mathrm{d}x\\
&=\kappa_i\Big(\frac{p+4}{2}\Big)^p\Big(\frac{\ln\tau}{\tau}\Big)^p+o\Big(\Big(\frac{\ln\tau}{\tau}\Big)^p\Big)
\ \mbox{as}\ \tau\rightarrow\infty,
\end{aligned}
$$
where $\kappa_i$ is defined in \eqref{eqn:k-i-lambda-idefinition}. Consequently, combining \eqref{eqn:nabla-term-estimate} and \eqref{eqn:o-ln-tau-tau}, we see that
$$
\begin{aligned}
e(a)&\leq a\tau^2+C\tau^4e^{-2\tau R_\tau}+\int_\Omega{V(x)|\psi_\tau|^2\mathrm{d}x}\\
&\leq a\tau^2+\kappa_i\Big(\frac{p+4}{2}\Big)^p\Big(\frac{\ln\tau}{\tau}\Big)^p+o\Big(\Big(\frac{\ln\tau}{\tau}\Big)^p\Big)
\ \mbox{as}\ \tau\rightarrow\infty.
\end{aligned}
$$
Taking
$$
\tau=\Big(\frac{p\kappa_i(p+4)^p}{2^{p+1}}\Big)^{\frac{1}{p+2}}a^{-\frac{1}{p+2}}
(p+2)^{\frac{-p}{p+2}}\Big(\ln\frac{1}{a}\Big)^{\frac{p}{p+2}}
$$
so that $\tau\rightarrow\infty$ as $a\searrow0$,
we further obtain that
\begin{equation}\label{eqn:energy-estimate-functional}
0\leq e(a)\leq \kappa_i^{\frac{2}{p+2}}\left(\frac{p+4}{2(p+2)}\right)^{\frac{2p}{p+2}}
\Big[\Big(\frac{p}{2}\Big)^{\frac{2}{p+2}}+\Big(\frac{2}{p}\Big)^{\frac{p}{p+2}}\Big]a^{\frac{p}{p+2}}
\Big(\ln\frac{1}{a}\Big)^{\frac{2p}{p+2}}\ \mbox{as}\ a\searrow0.
\end{equation}
This means the proof of Lemma \ref{lem:boundary-e-a-upper-bounded-eatimate} is complete.
\end{proof}

\begin{remark}\label{rem:estimate-details}\rm
We provide some details related to the estimate \eqref{eqn:energy-estimate-functional}.
Consider the function
$$
f_r(s)=as^{-2}+rs^p\Big(\ln\frac{1}{s}\Big)^p,\ s\in\Big(0,\frac{1}{e+1}\Big),
$$
where $r>0$, $p>0$ and $a\searrow0$.
Then we have
$$
\begin{aligned}
f_r'(s)&=-2as^{-3}+prs^{p-1}\left(\ln \frac{1}{s}\right)^{p}-prs^{p-1}\left(\ln \frac{1}{s}\right)^{p-1}\\
&=s^{p-1}\left(pr\left[\left(\ln \frac{1}{s}\right)^{p}- \left(\ln \frac{1}{s}\right)^{p-1}\right]-2as^{-(2+p)}\right)\\
&:=s^{p-1}(g(s)-h(s)).
\end{aligned}
$$
By comparing $g(s)$ and $h(s)$ with $a$ small (replace the variable with $t=\frac{1}{s}$), we can conclude that $f_r(s)$ with $s\in\Big(0,\frac{1}{e+1}\Big)$
has a unique minimum point $s_r>0$.
Moreover, let $f_r'(s)=0$, i.e.,
$$
s^{2+p}\left[\left(\ln \frac{1}{s}\right)^{p}- \left(\ln \frac{1}{s}\right)^{p-1}\right]=\frac{2a}{pr},
$$
then we can obtain that the minimum point $s_r$ satisfies
\begin{equation}\label{eqn:minimum-point-estimate}
s_r=(1+o(1))\Big(\frac{2}{pr}\Big)^{\frac{1}{p+2}}a^{\frac{1}{p+2}}(p+2)^{\frac{p}{p+2}}\Big(\ln\frac{1}{a}\Big)^{-\frac{p}{p+2}}
\ \mbox{as}\ a\searrow0
\end{equation}
and the minimum
\begin{equation}\label{eqn:minimum-estimate}
\begin{aligned}
f_r(s_r)&=as_r^{-2}+rs_r^p\Big(\ln\frac{1}{s_r}\Big)^p\\
&=(1+o(1))\Big(\frac{1}{p+2}\Big)^{\frac{2p}{p+2}}\Big[\Big(\frac{p}{2}\Big)^{\frac{2}{p+2}}
+\Big(\frac{2}{p}\Big)^{\frac{p}{p+2}}\Big]r^{\frac{2}{p+2}}a^{\frac{p}{p+2}}
\Big(\ln\frac{1}{a}\Big)^{\frac{2p}{p+2}}\ \mbox{as}\ a\searrow0.
\end{aligned}
\end{equation}
\end{remark}

\begin{lemma}\label{lem:z-a-k-mathcal-Z-0}
Suppose $V(x)$ satisfies \eqref{eqn:V-potential-x-x-i} and $Z_1=\emptyset$, then
for the subsequence $\{a_k\}$ obtained in Lemma {\rm \ref{lem:w-a-k-Q-2-converge-behavior}}, there holds
 $z_{a_k}\rightarrow x_i$ as $k\rightarrow\infty$ for some $x_i\in Z_0$.
\end{lemma}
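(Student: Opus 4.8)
The plan is to argue by contradiction, paralleling the proof of Lemma \ref{lem:z-a-k-mathcal-Z-1} but using the refined energy upper bound of Lemma \ref{lem:boundary-e-a-upper-bounded-eatimate} in place of Lemma \ref{lem:e-a-upper-bound-estimate}. Lemma \ref{lem:w-a-k-Q-2-converge-behavior} already tells us that $z_{a_k}\to x_i$ for some $i\in\{1,\dots,n\}$ with $V(x_i)=0$. Since $Z_1=\emptyset$, every interior zero of $V$ has exponent strictly smaller than $p$, and if $p_i=p$ then necessarily $x_i\in\partial\Omega$, i.e. $x_i\in Z_0$, which is exactly the claim. Hence it suffices to rule out the case $p_i<p$. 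Assuming $p_i<p$, I would (passing to further subsequences when convenient) distinguish two regimes for the ratio $r_k:=|z_{a_k}-x_i|/\varepsilon_{a_k}$, where $\varepsilon_{a_k}$ is as in Lemma \ref{lem:properties-blow-up}: either $\{r_k\}$ stays bounded or $r_k\to\infty$. Throughout, the mass lower bound \eqref{eqn:omega-a-2-beta-strict-0} will be used.

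Suppose first $r_k\to\infty$; here the interior-ball geometry is irrelevant. For $x\in B_{2R}(0)\cap\Omega_{a_k}$ one has $|\varepsilon_{a_k}x+z_{a_k}-x_i|\geq(r_k-2R)\varepsilon_{a_k}$, while the remaining factors of $V$ stay bounded below near $x_i$, so $V(\varepsilon_{a_k}x+z_{a_k})\geq C(r_k-2R)^{p_i}\varepsilon_{a_k}^{p_i}$ there; combining this with \eqref{eqn:omega-a-2-beta-strict-0} gives $\varepsilon_{a_k}^{-p_i}\int_{\Omega_{a_k}}V(\varepsilon_{a_k}x+z_{a_k})\omega_{a_k}^2\,\mathrm{d}x\geq C\beta(r_k-2R)^{p_i}\to\infty$, so $\int_\Omega V(x)u_{a_k}^2\,\mathrm{d}x\geq M\varepsilon_{a_k}^{p_i}$ for $k$ large, for every fixed $M>0$. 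Dropping the nonlocal term and minimizing $a_k s^{-2}+Ms^{p_i}$ over $s>0$ then yields
$$
e(a_k)\ \geq\ \frac{a_k}{\varepsilon_{a_k}^2}+M\varepsilon_{a_k}^{p_i}\ \geq\ \Big[\Big(\tfrac{p_i}{2}\Big)^{\frac{2}{p_i+2}}+\Big(\tfrac{2}{p_i}\Big)^{\frac{p_i}{p_i+2}}\Big]M^{\frac{2}{p_i+2}}a_k^{\frac{p_i}{p_i+2}}\ \ \mbox{as}\ a_k\searrow0.
$$
Since $p_i<p$ gives $\tfrac{p_i}{p_i+2}<\tfrac{p}{p+2}$, the right-hand side dominates $a_k^{p/(p+2)}(\ln\tfrac1{a_k})^{2p/(p+2)}$, contradicting Lemma \ref{lem:boundary-e-a-upper-bounded-eatimate}.

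Now suppose $\{r_k\}$ is bounded. If $x_i$ is an interior point of $\Omega$, then $\Omega_{a_k}\to\mathbb R^3$, Lemma \ref{lem:w-a-k-Q-2-converge-behavior} gives $\omega_{a_k}\to Q/|Q|_2$ in $H^1(\mathbb R^3)\cap L^\infty(\mathbb R^3)$, and passing to a subsequence with $(z_{a_k}-x_i)/\varepsilon_{a_k}\to y_0$, Fatou's lemma together with $V(x)/|x-x_i|^{p_i}\to\kappa_i$ as $x\to x_i$ yields $\liminf_k\varepsilon_{a_k}^{-p_i}\int_{\Omega_{a_k}}V(\varepsilon_{a_k}x+z_{a_k})\omega_{a_k}^2\,\mathrm{d}x\geq\tfrac{\kappa_i}{|Q|_2^2}\int_{\mathbb R^3}|x+y_0|^{p_i}Q^2\,\mathrm{d}x>0$, and the same minimization as above again contradicts Lemma \ref{lem:boundary-e-a-upper-bounded-eatimate}. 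If instead $x_i\in\partial\Omega$ (with $p_i<p$), then $\Omega_{a_k}$ need not exhaust $\mathbb R^3$, and I would invoke Lemma \ref{lem:similar-to-Guo-Prop-3.1}: there is $\sigma_k\to0$ with
$$
\frac{\big(\int_{\Omega_{a_k}}|\omega_{a_k}|^{2+\frac83}\,\mathrm{d}x\big)^3}{\big(\int_{\Omega_{a_k}}|\nabla\omega_{a_k}|^2\,\mathrm{d}x\big)^6}\ \leq\ \Big(\tfrac73\Big)^3|Q|_2^{-8}-Ce^{-\frac{2}{1+\sigma_k}r_k}.
$$
Since $\int_{\Omega_{a_k}}|\nabla\omega_{a_k}|^2\,\mathrm{d}x=1$ and $\{r_k\}$ is bounded, $e^{-\frac{2}{1+\sigma_k}r_k}\geq c>0$; inserting this into $E_{a_k}(u_{a_k})$ exactly as in \eqref{eqn:using-guo-prop-3.1-e-a} (using $\beta^*=\tfrac b2|Q|_2^{8/3}$ and $(1-t)^{1/3}\leq1-\tfrac t3$) gives $e(a_k)\geq C\varepsilon_{a_k}^{-4}e^{-\frac{2}{1+\sigma_k}r_k}\geq c\varepsilon_{a_k}^{-4}\to\infty$, contradicting $e(a_k)\to0$. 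In every case we obtain a contradiction, so $p_i=p$, and since $Z_1=\emptyset$ this forces $x_i\in\partial\Omega$, i.e. $x_i\in Z_0$.

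The argument is mostly bookkeeping: the point is to match, in each regime, the appropriate lower bound. The refined mass-bound estimate covers $r_k\to\infty$ as well as the interior flat-minimum subcase, but the boundary subcase with bounded $r_k$ genuinely requires the Gagliardo--Nirenberg inequality with remainder through Lemma \ref{lem:similar-to-Guo-Prop-3.1}, since there $\Omega_{a_k}$ only converges to a half-space and the blow-up limit need not be the full soliton. The one thing to check carefully is that the polynomial gap $\tfrac{p_i}{p_i+2}<\tfrac{p}{p+2}$ still beats the logarithmic factor $(\ln\tfrac1{a})^{2p/(p+2)}$ in Lemma \ref{lem:boundary-e-a-upper-bounded-eatimate}, which it does because only the exponent of $a$ is decisive.
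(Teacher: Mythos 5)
Your proposal is correct and follows essentially the same route as the paper: the paper likewise reduces to ruling out $p_{i_0}<p$ by reusing the case analysis from Lemma \ref{lem:z-a-k-mathcal-Z-1} (mass lower bound plus the minimization of $a_ks^{-2}+Cs^{p_{i_0}}$ for the interior/unbounded-ratio regimes, and Lemma \ref{lem:similar-to-Guo-Prop-3.1} via \eqref{eqn:using-guo-prop-3.1-e-a} for the boundary point with bounded ratio), each time contradicting the refined upper bound of Lemma \ref{lem:boundary-e-a-upper-bounded-eatimate}. Your explicit remark that the polynomial gap $\frac{p_{i_0}}{p_{i_0}+2}<\frac{p}{p+2}$ beats the logarithmic factor is exactly the point the paper leaves implicit.
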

\begin{proof}
By Lemma \ref{lem:w-a-k-Q-2-converge-behavior}, we get $z_{a_k}\rightarrow x_{i_0}$ as $k\rightarrow\infty$ for some $x_{i_0}\in\bar\Omega$ satisfying $V(x_{i_0})=0$.
Since $ Z_1=\emptyset$,
we first assume that $x_{i_0}$ is an inner point of $\Omega$ such that $0<p_{i_0}<p$.
Then we also know that $\Omega_0=\lim_{k\rightarrow\infty}\Omega_{a_k}=\mathbb R^3$. According to the proof of Lemma \ref{lem:z-a-k-mathcal-Z-1},
whether $\left\{\frac{|z_{a_k}-x_{i_0}|}{\varepsilon_{a_k}}\right\}$ is bounded uniformly as $k\rightarrow\infty$ or $\frac{|z_{a_k}-x_{i_0}|}{\varepsilon_{a_k}}\rightarrow\infty$ as $k\rightarrow\infty$, we can obtain that for enough small $R>0$, there exists $C_0(R)>0$, independent of $a_k$, such that
$$
e(a_k)\geq a_k\varepsilon_{a_k}^{-2}+C_0(R)\varepsilon_{a_k}^{p_{i_0}}
\geq\Big(\left(\frac{p_{i_0}}{2}\right)^{\frac{2}{p_{i_0}+2}}+\left(\frac{2}{p_{i_0}}\right)^{\frac{p_{i_0}}{p_{i_0}+2}}\Big)
C_0(R)^{\frac{2}{p_{i_0}+2}}a_k^{\frac{p_{i_0}}{p_{i_0}+2}}\ \mbox{as}\ a_k\searrow0,
$$
which contradicts Lemma \ref{lem:boundary-e-a-upper-bounded-eatimate} because of $p_{i_0}<p$.

Now it suffices to consider that $x_{i_0}\in\partial\Omega$ satisfies $0<p_{i_0}<p$. Similar to the proof of Lemma \ref{lem:z-a-k-mathcal-Z-1}, we know that \eqref{eqn:contrary-bounded-e-a-V-M} holds if $\frac{|z_{a_k}-x_{i_0}|}{\varepsilon_{a_k}}\rightarrow\infty$ as $k\rightarrow\infty$ or \eqref{eqn:using-guo-prop-3.1-e-a} holds if $\{\frac{|z_{a_k}-x_{i_0}|}{\varepsilon_{a_k}}\}$ is bounded uniformly as $k\rightarrow\infty$, which also contradicts Lemma \ref{lem:boundary-e-a-upper-bounded-eatimate} due to
$p_{i_0}<p$.
Hence, we conclude that $z_{a_k}\rightarrow x_i\in Z_0$ as $k\rightarrow\infty$ and
the proof is complete.
\end{proof}

\begin{lemma}\label{lem:z-a-k-x-i-ln-varepsilon-upper-lower}
Suppose $V(x)$ satisfies \eqref{eqn:V-potential-x-x-i} and $Z_1=\emptyset$, it results that
$$
\limsup\limits_{k\rightarrow\infty}\frac{|z_{a_k}-x_i|}{\varepsilon_{a_k}|\ln\varepsilon_{a_k}|}<\infty\ \mbox{and}\
\liminf\limits_{k\rightarrow\infty}\frac{|z_{a_k}-x_i|}{\varepsilon_{a_k}|\ln\varepsilon_{a_k}|}\geq\frac{p+4}{2}.
$$
\end{lemma}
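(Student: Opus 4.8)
The plan is to combine the exact energy identity for $e(a_k)$ with two lower bounds — one coming from the potential term, one from the Gagliardo--Nirenberg remainder of Lemma~\ref{lem:similar-to-Guo-Prop-3.1} — and to play both against a refined upper bound for $e(a_k)$.

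First I would fix notation: by Lemma~\ref{lem:z-a-k-mathcal-Z-0} we may assume $z_{a_k}\to x_i$ with $x_i\in Z_0\subset\partial\Omega$ and $V(x_i)=0$, so that Lemma~\ref{lem:similar-to-Guo-Prop-3.1} applies; write $d_k:=|z_{a_k}-x_i|$ and $t_k:=d_k/\varepsilon_{a_k}$, so that the claim is $\limsup_k t_k/|\ln\varepsilon_{a_k}|<\infty$ and $\liminf_k t_k/|\ln\varepsilon_{a_k}|\ge\frac{p+4}{2}$. Using $\int_\Omega|\nabla u_{a_k}|^2\,\mathrm{d}x=\varepsilon_{a_k}^{-2}$ and $\int_\Omega|u_{a_k}|^{2+\frac83}\,\mathrm{d}x=\varepsilon_{a_k}^{-4}\int_{\Omega_{a_k}}|\omega_{a_k}|^{2+\frac83}\,\mathrm{d}x$, one has
\[
e(a_k)=a_k\varepsilon_{a_k}^{-2}+\int_\Omega V(x)u_{a_k}^2\,\mathrm{d}x+\varepsilon_{a_k}^{-4}\Big[\frac b2-\frac{3\beta^*}{7}\int_{\Omega_{a_k}}|\omega_{a_k}|^{2+\frac83}\,\mathrm{d}x\Big].
\]
Since $\int_{\Omega_{a_k}}|\nabla\omega_{a_k}|^2\,\mathrm{d}x=1$, Lemma~\ref{lem:similar-to-Guo-Prop-3.1}, the inequality $(1-s)^{1/3}\le1-\frac s3$ and $\beta^*=\frac b2|Q|_2^{8/3}$ yield a sequence $\sigma_k\to0$ with $\frac b2-\frac{3\beta^*}{7}\int_{\Omega_{a_k}}|\omega_{a_k}|^{2+\frac83}\,\mathrm{d}x\ge Ce^{-\frac{2}{1+\sigma_k}t_k}$; in particular, if $\{t_k\}$ had a bounded subsequence this bracket alone would force $e(a_k)\ge C\varepsilon_{a_k}^{-4}\to\infty$, contradicting $e(a_k)\to0$, so $t_k\to\infty$. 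Also $V(x)\ge c|x-x_i|^p$ near $x_i$ (since $p_i=p$ and the remaining factors in \eqref{eqn:V-potential-x-x-i} are bounded below there), and on $B_{2R}(0)$ one has $|x+(z_{a_k}-x_i)/\varepsilon_{a_k}|\ge t_k-2R\ge t_k/2$ for large $k$; combining this with Lemma~\ref{lem:properties-blow-up}(ii) gives $\int_\Omega V(x)u_{a_k}^2\,\mathrm{d}x=\int_{\Omega_{a_k}}V(\varepsilon_{a_k}x+z_{a_k})\omega_{a_k}^2\,\mathrm{d}x\ge c'\varepsilon_{a_k}^p t_k^p$. Dropping the non-negative terms in the identity, I then obtain the two lower bounds $e(a_k)\ge c'\varepsilon_{a_k}^p t_k^p$ and $e(a_k)\ge C\varepsilon_{a_k}^{-4}e^{-\frac{2}{1+\sigma_k}t_k}$.

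Second, and this is the key step, I would establish the refined upper bound $e(a_k)\le C\varepsilon_{a_k}^p|\ln\varepsilon_{a_k}|^p$. From $e(a_k)\ge a_k\varepsilon_{a_k}^{-2}$ we have $a_k\le e(a_k)\varepsilon_{a_k}^2$; inserting this into Lemma~\ref{lem:boundary-e-a-upper-bounded-eatimate}, i.e.\ $e(a_k)\le Ca_k^{p/(p+2)}\bigl(\ln\frac1{a_k}\bigr)^{2p/(p+2)}$, and solving for $e(a_k)$ gives $e(a_k)\le C\varepsilon_{a_k}^p\bigl(\ln\frac1{a_k}\bigr)^p$. To replace $\ln\frac1{a_k}$ by $|\ln\varepsilon_{a_k}|$, note that $e(a_k)\ge c'\varepsilon_{a_k}^p$ (from the first lower bound, since $t_k\ge1$ eventually), which fed back into Lemma~\ref{lem:boundary-e-a-upper-bounded-eatimate} forces $a_k\ge c\varepsilon_{a_k}^{p+2}\bigl(\ln\frac1{a_k}\bigr)^{-2}$; absorbing the resulting iterated logarithm and using $\varepsilon_{a_k}\to0$ gives $\ln\frac1{a_k}\le C|\ln\varepsilon_{a_k}|$ for $k$ large, hence the claimed bound.

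Finally I conclude. The first lower bound together with this upper bound gives $c'\varepsilon_{a_k}^p t_k^p\le C\varepsilon_{a_k}^p|\ln\varepsilon_{a_k}|^p$, i.e.\ $t_k\le C|\ln\varepsilon_{a_k}|$, so $\limsup_k t_k/|\ln\varepsilon_{a_k}|<\infty$. The second lower bound together with the upper bound gives $e^{-\frac{2}{1+\sigma_k}t_k}\le C\varepsilon_{a_k}^{p+4}|\ln\varepsilon_{a_k}|^p$, so after taking logarithms $\frac{2}{1+\sigma_k}t_k\ge(p+4)|\ln\varepsilon_{a_k}|-p\ln|\ln\varepsilon_{a_k}|-C$; dividing by $|\ln\varepsilon_{a_k}|$ and letting $k\to\infty$ (with $\sigma_k\to0$ and $|\ln\varepsilon_{a_k}|\to\infty$) yields $\liminf_k t_k/|\ln\varepsilon_{a_k}|\ge\frac{p+4}{2}$. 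I expect the bootstrap of the second step to be the main obstacle: a priori one controls $e(a_k)$ only in terms of $a_k$, and one must reconcile this with the blow-up scale $\varepsilon_{a_k}$ and show $\ln\frac1{a_k}\sim|\ln\varepsilon_{a_k}|$, since it is precisely the logarithmic power in $e(a_k)\le C\varepsilon_{a_k}^p|\ln\varepsilon_{a_k}|^p$ that produces the sharp exponent $\frac{p+4}{2}$; in contrast, no sharp constant for the potential term is needed (the crude $V\ge c|x-x_i|^p$ suffices), and, unlike in Theorem~\ref{thm:mass-concentration-inner-point}, one never uses $\Omega_0=\mathbb R^3$ — only Lemmas~\ref{lem:properties-blow-up}(ii) and \ref{lem:similar-to-Guo-Prop-3.1}.
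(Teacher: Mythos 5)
Your proof is correct, and while it relies on exactly the same ingredients as the paper's (the upper bound of Lemma \ref{lem:boundary-e-a-upper-bounded-eatimate}, the localization of Lemma \ref{lem:z-a-k-mathcal-Z-0}, the mass lower bound of Lemma \ref{lem:properties-blow-up}($ii$), and the exponential remainder estimate of Lemma \ref{lem:similar-to-Guo-Prop-3.1}), the mechanics of the comparison are genuinely different. The paper keeps everything in terms of $a_k$: for the $\limsup$ it argues by contradiction via Fatou's lemma (using $\Omega_0=\mathbb R^3$) to make the coefficient of $(\varepsilon_{a_k}|\ln\varepsilon_{a_k}|)^p$ arbitrarily large and then re-optimizes $a_ks^{-2}+Ms^p|\ln s|^p$ over $s$ as in Remark \ref{rem:estimate-details}; for the $\liminf$ it assumes a limit $\eta<\frac{p+4}{2}$, splits into the cases $\frac{\eta}{1+\sigma_k}\le 2$ and $>2$, and in the latter minimizes $a_ks^{-2}+Cs^{\frac{2\eta}{1+\sigma_k}-4}$ to reach a bound $Ca_k^{r/(r+2)}$ with $r<p$ that contradicts Lemma \ref{lem:boundary-e-a-upper-bounded-eatimate}. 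You instead bootstrap the upper bound into the blow-up scale, proving $e(a_k)\le C\varepsilon_{a_k}^p|\ln\varepsilon_{a_k}|^p$ together with $\ln\frac{1}{a_k}\le C|\ln\varepsilon_{a_k}|$, after which both conclusions drop out by direct comparison with the two lower bounds $e(a_k)\ge c'\varepsilon_{a_k}^pt_k^p$ and $e(a_k)\ge C\varepsilon_{a_k}^{-4}e^{-\frac{2}{1+\sigma_k}t_k}$ and a single passage to logarithms. What your route buys: no contradiction hypotheses, no case analysis, no need for the sharp constant $\kappa_i$ or for $\Omega_0=\mathbb R^3$ in the potential estimate (the crude $V\ge c|x-x_i|^p$ plus Lemma \ref{lem:properties-blow-up}($ii$) suffices), and an explicit intermediate statement $e(a_k)\approx\varepsilon_{a_k}^p|\ln\varepsilon_{a_k}|^p$ that is reused implicitly in the proof of Theorem \ref{thm:mass-concentration-boundary} anyway. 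What the paper's route buys is that it never needs the auxiliary fact $\ln\frac1{a_k}\sim|\ln\varepsilon_{a_k}|$, which in your argument requires the small extra bootstrap (feeding $e(a_k)\ge c'\varepsilon_{a_k}^p$ back into Lemma \ref{lem:boundary-e-a-upper-bounded-eatimate} and absorbing the iterated logarithm); you identified this correctly as the only delicate point, and your treatment of it is sound.
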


\begin{proof}
We first prove the former.
Assume to the contrary that there exists a subsequence, still denoted by $\{a_k\}$, such that
$\frac{|z_{a_k}-x_i|}{\varepsilon_{a_k}|\ln\varepsilon_{a_k}|}\rightarrow\infty$ as $k\rightarrow\infty$. This implies
that $\frac{|z_{a_k}-x_i|}{\varepsilon_{a_k}}\rightarrow\infty$ as $k\rightarrow\infty$ and then $\Omega_0=\lim_{k\rightarrow\infty}\Omega_{a_k}=\mathbb R^3$.
Moreover, using Lemma \ref{lem:w-a-k-Q-2-converge-behavior} and Fatou's lemma, we can infer that for any large $M>0$,
$$
\begin{aligned}
&\quad\liminf\limits_{\varepsilon_{a_k}\rightarrow0}\frac{1}{(\varepsilon_{a_k}|\ln\varepsilon_{a_k}|)^p}
\int_{B_{2R}(0)\cap\Omega_{a_k}}V(\varepsilon_{a_k}x+z_{a_k})|\omega_{a_k}|^2\mathrm{d}x&\\
&\geq C\int_{B_{2R}(0)\cap\Omega_{a_k}}\liminf\limits_{\varepsilon_{a_k}\rightarrow0}
\Big|\frac{x}{|\ln\varepsilon_{a_k}|}+\frac{z_{a_k}-x_{i}}{\varepsilon_{a_k}|\ln\varepsilon_{a_k}|}\Big|^{p_{i}}
\prod\limits_{j=1,j\neq i_0}^n|\varepsilon_{a_k}x+z_{a_k}-x_j|^{p_j}|\omega_{a_k}|^2\mathrm{d}x\\
&\geq M,
\end{aligned}
$$
then
$$
e(a_k)\geq a_k\varepsilon_{a_k}^{-2}+M(\varepsilon_{a_k}|\ln\varepsilon_{a_k}|)^p
\geq CM^{\frac{2}{p+2}}a_k^{\frac{p}{p+2}}\Big(\ln\frac{1}{a_k}\Big)
^{\frac{2p}{p+2}}
\ \mbox{as} \ a_k\searrow0,$$
which contradicts Lemma \ref{lem:boundary-e-a-upper-bounded-eatimate}. Thus, the former holds.

Next we verify the remaining one. Assume by contradiction that there exists a subsequence, still denoted by $\{a_k\}$, such that $\frac{|z_{a_k}-x_i|}{\varepsilon_{a_k}|\ln\varepsilon_{a_k}|}\rightarrow\eta<\frac{p+4}{2}$ as $k\rightarrow\infty$.  Then, similar to the proof of
\eqref{eqn:using-guo-prop-3.1-e-a}, we have
\begin{equation}\label{eqn:lower-bounded-z-a-k-ln-varepsilon}
\begin{aligned}
e(a_k)
&\geq\frac{a_k}{\varepsilon_{a_k}^2}\int_{\Omega_{a_k}}|\nabla\omega_{a_k}|^2\mathrm{d}x
+\frac{b}{2\varepsilon_{a_k}^4}\left(\int_{\Omega_{a_k}}|\nabla\omega_{a_k}|^2\mathrm{d}x\right)^2
-\frac{3\beta^*}{7\varepsilon_{a_k}^4}\int_{\Omega_{a_k}}|\omega_{a_k}|^{2+\frac{8}{3}}\mathrm{d}x\\
&\geq\frac{a_k}{\varepsilon_{a_k}^2}
+C\frac{1}{\varepsilon_{a_k}^{4}}e^{-\frac{2}{1+\sigma_k}\frac{|z_{a_k}-x_{i}|}{\varepsilon_{a_k}}}
=a_k\varepsilon_{a_k}^{-2}+C\varepsilon_{a_k}^{\frac{2\eta}{1+\sigma_k}-4}
\ \mbox{as}\ k\rightarrow\infty.
\end{aligned}
\end{equation}
If $\frac{\eta}{1+\sigma_k}\leq2$ for some sufficiently large $k$, then we derive that $e(a_k)\geq C$ for some sufficiently large $k$, which contradicts Lemma \ref{lem:boundary-e-a-upper-bounded-eatimate}. On the other hand, if $\frac{\eta}{1+\sigma_k}>2$ as $k\rightarrow\infty$, then there exists a sufficiently large $k_0$ such that for $k>k_0$,
$$
\eta+\frac{p+4}{2}>\frac{2\eta}{1+\sigma_k}.
$$
Taking $0<r:=\frac{2\eta+p-4}{2}<p$, we further obtain that for $k>k_0$,
$$
0<1-\frac{1}{\frac{\eta}{1+\sigma_k}-1}
<1-\frac{1}{\frac{1}{2}(\eta+\frac{p+4}{2})-1}
=\frac{r}{r+2}<\frac{p}{p+2},
$$
which, together with \eqref{eqn:lower-bounded-z-a-k-ln-varepsilon}, yields that for $k>k_0$,
$$
e(a_k)\geq Ca_k^{1-\frac{1}{\frac{\eta}{1+\sigma_k}-1}}>Ca_k^{\frac{r}{r+2}}.
$$
This still contradicts Lemma \ref{lem:boundary-e-a-upper-bounded-eatimate}. Hence the proof is complete.
\end{proof}

\noindent\textbf{Proof of Theorem \ref{thm:mass-concentration-boundary}.}
By Lemma \ref{lem:z-a-k-x-i-ln-varepsilon-upper-lower}, we see that $\frac{|z_{a_k}-x_i|}{\varepsilon_{a_k}}\rightarrow\infty$ as $k\rightarrow\infty$, which gives $\Omega_0=\lim_{k\rightarrow\infty}\Omega_{a_k}=\mathbb R^3$. It follows from Lemma \ref{lem:w-a-k-Q-2-converge-behavior} that
$$
\lim\limits_{k\rightarrow\infty}\omega_{a_k}=\omega_0=\frac{Q(|x|)}{|Q|_2}\ \mbox{in}\ H^1(\mathbb R^3),
$$
which shows that $\omega_0$ admits the exponential decay as $|x|\rightarrow\infty$. Moreover, by Lemma \ref{lem:w-a-k-Q-2-converge-behavior}-($ii$), we know that $\omega_{a_k}$ admits the exponential decay near $|x|\rightarrow\infty$ as $k\rightarrow\infty$. Then applying the Lemma \ref{lem:z-a-k-x-i-ln-varepsilon-upper-lower} again, we can deduce that
\begin{equation}\label{eqn:in-order-to-V-estimate-ln-varepsilon}
\int_{\Omega_{a_k}}\Big|\frac{x}{|\ln\varepsilon_{a_k}|
}+\frac{z_{a_k}-x_i}{\varepsilon_{a_k}|\ln\varepsilon_{a_k}|}\Big|^p
\Big|\omega_{a_k}^2-\frac{Q^2(x)}{|Q|_2^2}\Big|\mathrm{d}x\rightarrow0\
\mbox{as}\ k\rightarrow\infty.
\end{equation}

Now, we can establish the desired lower bound estimate.
In view of $\Omega_{a_k}=\{x\in\mathbb R^3:\varepsilon_{a_k}x+z_{a_k}\in\Omega\}$, it is clear that $\{\varepsilon_{a_k}x+z_{a_k}\}$ is bounded uniformly in $k$. Then there exists $M>0$ such that
$$
V(\varepsilon_{a_k}x+z_{a_k})\leq M|\varepsilon_{a_k}x+z_{a_k}-x_i|^p,\ x\in\Omega_{a_k}.
$$
Together with \eqref{eqn:in-order-to-V-estimate-ln-varepsilon}, we obtain
$$
\begin{aligned}
&\quad\ \Big|\int_{\Omega_{a_k}}V(\varepsilon_{a_k}x+z_{a_k})\omega_{a_k}^2\mathrm{d}x
-\int_{\Omega_{a_k}}V(\varepsilon_{a_k}x+z_{a_k})\frac{Q^2(x)}{|Q|_2^2}\mathrm{d}x\Big|\\
&\leq M\int_{\Omega_{a_k}}|\varepsilon_{a_k}x+z_{a_k}-x_i|^p\Big|\omega_{a_k}^2-\frac{Q^2(x)}{|Q|_2^2}\Big|\mathrm{d}x\\
&=M\varepsilon_{a_k}^p|\ln\varepsilon_{a_k}|^p
\int_{\Omega_{a_k}}\Big|\frac{x}{|\ln\varepsilon_{a_k}|}
+\frac{z_{a_k}-x_i}{\varepsilon_{a_k}|\ln\varepsilon_{a_k}|}\Big|^p
\Big|\omega_{a_k}^2-\frac{Q^2(x)}{|Q|_2^2}\Big|\mathrm{d}x\\
&=o(\varepsilon_{a_k}^p|\ln\varepsilon_{a_k}|^p)\ \mbox{as}\ k\rightarrow\infty.
\end{aligned}
$$
Consequently, we infer that
$$
\begin{aligned}
e(a_k)&
\geq a_k\varepsilon_{a_k}^{-2}+\int_{\Omega_{a_k}}V(\varepsilon_{a_k}x+z_{a_k})\omega_{a_k}^2\mathrm{d}x\\
&=a_k\varepsilon_{a_k}^{-2}+\int_{\Omega_{a_k}}V(\varepsilon_{a_k}x+z_{a_k})\frac{Q^2(x)}{|Q|_2^2}\mathrm{d}x
+o(\varepsilon_{a_k}^p|\ln\varepsilon_{a_k}|^p)\\
&=a_k\varepsilon_{a_k}^{-2}+\kappa_i\beta_0^p\varepsilon_{a_k}^p|\ln\varepsilon_{a_k}|^p
+o(\varepsilon_{a_k}^p|\ln\varepsilon_{a_k}|^p)\ \mbox{as}\ k\rightarrow\infty,
\end{aligned}
$$
where $\beta_0:
=\lim_{k\rightarrow\infty}\frac{|z_{a_k}-x_i|}{\varepsilon_{a_k}|
\ln\varepsilon_{a_k}|}\geq\frac{p+4}{2}$ due to Lemma \ref{lem:z-a-k-x-i-ln-varepsilon-upper-lower}. Moreover, by \eqref{eqn:minimum-estimate}, we get that
$$
e(a)\geq \kappa^{\frac{2}{p+2}}\left(\frac{\beta_0}{p+2}\right)^{\frac{2p}{p+2}}
\Big[\Big(\frac{p}{2}\Big)^{\frac{2}{p+2}}+\Big(\frac{2}{p}\Big)^{\frac{p}{p+2}}\Big]a^{\frac{p}{p+2}}
\Big(\ln\frac{1}{a}\Big)^{\frac{2p}{p+2}}\ \mbox{as}\ a\searrow0.
$$
Then Lemma \ref{lem:boundary-e-a-upper-bounded-eatimate} implies that $\beta_0\leq\frac{p+4}{2}$ and then $\beta_0=\frac{p+4}{2}$. That is to say, $z_{a_k}$ satisfies
$$
\beta_0=\lim\limits_{k\rightarrow\infty}\frac{|z_{a_k}-x_i|}
{\varepsilon_{a_k}|\ln\varepsilon_{a_k}|}=\frac{p+4}{2}.
$$
Meanwhile, it follows from \eqref{eqn:minimum-point-estimate} that
$$
\lim\limits_{k\rightarrow\infty}\frac{\varepsilon_{a_k}}{a^{\frac{1}{p+2}}\Big(\ln\frac{1}{a}\Big)^{-\frac{p}{p+2}}}
=\Big(\frac{2^{p+1}}{p\kappa(p+4)^p}\Big)^{\frac{1}{p+2}}(p+2)^{\frac{p}{p+2}}.
$$
The proof of Theorem \ref{thm:mass-concentration-boundary} is complete.\qed

\section{Local uniqueness for minimizer}\label{sec:Local uniqueness}

This section is devoted to proving Theorem \ref{thm:local-uniqueness} under the assumptions of $ Z_1=\{x_1\}$ and $p>1$. We will use a contradiction argument as that of \cite[Theorem 1.3]{Guo-2017-SIAM}.
Suppose that there exist two different positive minimizers $u_{1,k}$ and $u_{2,k}$ of $e(a_k)$ with $a_k\searrow0$ as $k\rightarrow\infty$.
Define $\varepsilon_{i,k}:=\left(\int_\Omega|\nabla u_{i,k}|^2\right)^{-\frac{1}{2}}$ (from now on, we abbreviate Lebesgue integral as this form),
then we know that $u_{i,k}(x)$ satisfies the Euler-Lagrange equation with Lagrange multiplier $\mu_{i,k}\in\mathbb R$
\begin{equation*}
-(a_k+b\varepsilon_{i,k}^{-2})\Delta u_{i,k}+V(x)u_{i,k}=\mu_{i,k} u_{i,k}+\beta^*u_{i,k}^{1+\frac{8}{3}}\  \mbox{in}\ {\Omega}.
\end{equation*}
Moreover, let $z_{i,k}$ be the unique maximum point of $u_{i,k}$
and $\bar u_{i,k}(x):=\varepsilon_k^{\frac{3}{2}}u_{i,k}(x)$ with
$\varepsilon_k:=\lambda^{-1}a_k^{\frac{1}{p+2}}$,
then it follows from Theorem \ref{thm:mass-concentration-inner-point} that
\begin{equation}\label{eqn:z-i-k-cover-x-1}
\frac{z_{i,k}-x_1}
{\varepsilon_k}\rightarrow 0\ \mbox{as}\ k\rightarrow\infty
\end{equation}
and
\begin{equation}\label{eqn:tilde-u-conver-Q}
\tilde u_{i,k}:= \bar u_{i,k}(\varepsilon_kx+z_{2,k})\rightarrow \frac{Q(|x|)}
{|Q|_2}
\end{equation}
uniformly in $\mathbb R^3$ as $k\rightarrow\infty$.
Moreover,  $\bar u_{i,k}(x)$ satisfies the equation
\begin{equation}\label{eqn:bar-u-i-k-equation}
-(a_k+b\varepsilon_{i,k}^{-2})\varepsilon_k^4\Delta \bar u_{i,k}+\varepsilon_k^4V(x)\bar u_{i,k}=\mu_{i,k}\varepsilon_k^4\bar u_{i,k}+\beta^*\bar u_{i,k}^{1+\frac{8}{3}}\  \mbox{in}\ {\Omega}.
\end{equation}

Since $u_{1,k}\not\equiv u_{2,k}$, we first consider
$$
\bar\xi_k(x):=\frac{u_{2,k}- u_{1,k}}{\|u_{2,k}-u_{1,k}\|_{L^{\infty}
(\Omega)}}=\frac{\bar u_{2,k}-\bar u_{1,k}}{\|\bar u_{2,k}-\bar u_{1,k}\|_{L^{\infty}
(\Omega)}}
$$
and give a prior estimate for $\bar\xi_k(x)$ motivated by \cite{Guo-2017-SIAM,Cao-2015-CV}.
\begin{lemma}
For any $x_0\in\Omega$, there exists a constant $ \delta>0$ small and $C>0$ such that
\begin{equation}\label{eqn:boundary-estimate-xi-k}
b\varepsilon_k^{2}
\int_{\partial B_{\delta(x_0)}}|\nabla\bar\xi_k|^2 \mathrm dS+\varepsilon_k^4\int_{\partial B_{\delta(x_0)}}V(x)\bar\xi_k^2\mathrm dS
+\frac{b}{6}\int_{\partial B_{\delta(x_0)}}\bar\xi_k^2\mathrm dS
\leq C\varepsilon_k^{3}.
\end{equation}
\end{lemma}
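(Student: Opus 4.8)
The plan is to show that, away from the concentration point $x_1$, the normalized difference $\bar\xi_k$ and its gradient decay exponentially fast in $1/\varepsilon_k$; the boundary estimate \eqref{eqn:boundary-estimate-xi-k} then follows by integrating pointwise bounds over the sphere $\partial B_\delta(x_0)$. First I would record the linear equation solved by $\bar\xi_k$. Subtracting the two equations \eqref{eqn:bar-u-i-k-equation} for $i=2$ and $i=1$, writing $(a_k+b\varepsilon_{2,k}^{-2})\Delta\bar u_{2,k}-(a_k+b\varepsilon_{1,k}^{-2})\Delta\bar u_{1,k}=(a_k+b\varepsilon_{1,k}^{-2})\Delta(\bar u_{2,k}-\bar u_{1,k})+b(\varepsilon_{2,k}^{-2}-\varepsilon_{1,k}^{-2})\Delta\bar u_{2,k}$, $\mu_{2,k}\bar u_{2,k}-\mu_{1,k}\bar u_{1,k}=\mu_{1,k}(\bar u_{2,k}-\bar u_{1,k})+(\mu_{2,k}-\mu_{1,k})\bar u_{2,k}$, and $\bar u_{2,k}^{1+\frac83}-\bar u_{1,k}^{1+\frac83}=c_k(x)(\bar u_{2,k}-\bar u_{1,k})$ with $c_k(x):=\frac{11}{3}\int_0^1\big(s\bar u_{2,k}+(1-s)\bar u_{1,k}\big)^{\frac83}\,\mathrm{d}s$, and dividing by $\|\bar u_{2,k}-\bar u_{1,k}\|_{L^\infty(\Omega)}$, one obtains
\begin{equation*}
-(a_k+b\varepsilon_{1,k}^{-2})\varepsilon_k^4\Delta\bar\xi_k+\varepsilon_k^4V(x)\bar\xi_k=\mu_{1,k}\varepsilon_k^4\bar\xi_k+\beta^*c_k(x)\bar\xi_k+A_k\Delta\bar u_{2,k}+B_k\bar u_{2,k}\quad\text{in }\Omega,
\end{equation*}
where $A_k:=b(\varepsilon_{2,k}^{-2}-\varepsilon_{1,k}^{-2})\varepsilon_k^4\|\bar u_{2,k}-\bar u_{1,k}\|_{L^\infty(\Omega)}^{-1}$, $B_k:=(\mu_{2,k}-\mu_{1,k})\varepsilon_k^4\|\bar u_{2,k}-\bar u_{1,k}\|_{L^\infty(\Omega)}^{-1}$, while $\|\bar\xi_k\|_{L^\infty(\Omega)}=1$ and $\bar\xi_k\in H_0^1(\Omega)$. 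By \eqref{eqn:mu-a-estimate} and $\varepsilon_{1,k}=\varepsilon_k(1+o(1))$ (from the proof of Theorem \ref{thm:mass-concentration-inner-point}), the leading coefficient equals $b\varepsilon_k^2(1+o(1))$ and $-\mu_{1,k}\varepsilon_k^4=\frac b6(1+o(1))$.

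Next I would establish the a priori bounds needed to treat the equation as a linear perturbation. Testing it against $\bar\xi_k$, using the Euler--Lagrange equation for $\bar u_{2,k}$ to rewrite $\int_\Omega\Delta\bar u_{2,k}\,\bar\xi_k$ via $L^1$-- and $L^q$--norms of the concentrating profile (controlled by $\varepsilon_a^4\int u_a^{2+8/3}\to\frac{7b}{6\beta^*}$, interpolation with $\int u_a^2=1$, and Lemma \ref{lem:w-a-k-Q-2-converge-behavior}), together with $\|\bar\xi_k\|_{L^\infty(\Omega)}=1$, gives $\|\nabla\bar\xi_k\|_{L^2(\Omega)}\le C\varepsilon_k^{-1}$ and $|A_k|+|B_k|\le C$ (any polynomial bound in $\varepsilon_k^{-1}$ would suffice below). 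Then I would localize: fix $\delta_1>0$ with $\overline{B_{2\delta_1}(x_0)}\subset\Omega$, and, if $x_0\neq x_1$, also $2\delta_1<|x_0-x_1|$; since $z_{i,k}\to x_1$, there is $d_0>0$ with $\mathrm{dist}\big(\partial B_\delta(x_0),x_1\big)\ge d_0$ for all $\delta\in[\delta_1,2\delta_1]$ and all large $k$. Pick $\rho\in(0,d_0/2)$. On $\Omega\setminus B_\rho(x_1)$ one has $|x-z_{i,k}|\ge\rho/2$ for large $k$, so Lemma \ref{lem:w-a-k-Q-2-converge-behavior}(ii) (with $\varepsilon_k=\varepsilon_{i,k}(1+o(1))$) yields $\bar u_{i,k}(x),|\nabla\bar u_{i,k}(x)|,|\Delta\bar u_{i,k}(x)|\le Ce^{-c/\varepsilon_k}$, hence $|c_k(x)|\le Ce^{-c/\varepsilon_k}$ there. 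Consequently $\bar\xi_k$ solves, in $\Omega\setminus B_\rho(x_1)$, a uniformly elliptic equation with leading coefficient $b\varepsilon_k^2(1+o(1))$, zeroth--order coefficient $\varepsilon_k^4V-\mu_{1,k}\varepsilon_k^4-\beta^*c_k\ge b/12$ for large $k$, right--hand side $g_k:=A_k\Delta\bar u_{2,k}+B_k\bar u_{2,k}$ with $|g_k|\le Ce^{-c/(2\varepsilon_k)}$, and boundary data $\bar\xi_k=0$ on $\partial\Omega$, $|\bar\xi_k|\le1$ on $\partial B_\rho(x_1)$.

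Now a comparison argument closes the estimate. For $\beta>0$ small enough that $b\beta^2<b/12$ and $C$ large, the function $\Psi_k(x):=e^{-\beta(|x-x_1|-\rho)/\varepsilon_k}+Ce^{-c/(4\varepsilon_k)}$ is a supersolution in $\Omega\setminus B_\rho(x_1)$ dominating $\pm\bar\xi_k$ on the boundary, so the maximum principle gives $|\bar\xi_k(x)|\le Ce^{-c_1/\varepsilon_k}$ on $\{\,\mathrm{dist}(x,x_1)\ge d_0/2\,\}$ for some $c_1>0$. For $x$ with $\mathrm{dist}(x,x_1)\ge d_0$, applying interior gradient estimates to the rescaled equation on $B_{d_0/2}(x)$ (pass to $z=(x-\cdot)/\varepsilon_k$, which is uniformly elliptic with bounded coefficients on a ball of radius $\asymp\varepsilon_k^{-1}$, with data and right--hand side of size $\le Ce^{-c_1/\varepsilon_k}$) yields $|\nabla\bar\xi_k(x)|\le C\varepsilon_k^{-1}e^{-c_1/\varepsilon_k}\le Ce^{-c_2/\varepsilon_k}$ for some $c_2\in(0,c_1)$. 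Hence for any $\delta\in(\delta_1,2\delta_1)$, integrating these pointwise bounds over $\partial B_\delta(x_0)$ (surface area $\le C$, and $\varepsilon_k^2,\varepsilon_k^4\le1$),
\begin{equation*}
b\varepsilon_k^{2}\!\int_{\partial B_\delta(x_0)}\!\!|\nabla\bar\xi_k|^2\,\mathrm{d}S+\varepsilon_k^4\!\int_{\partial B_\delta(x_0)}\!\!V\bar\xi_k^2\,\mathrm{d}S+\frac b6\!\int_{\partial B_\delta(x_0)}\!\!\bar\xi_k^2\,\mathrm{d}S\le C\,e^{-c_3/\varepsilon_k}\le C\varepsilon_k^{3}
\end{equation*}
for $k$ large, with $c_3:=2\min\{c_1,c_2\}$, which is \eqref{eqn:boundary-estimate-xi-k}.

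The step I expect to be the main obstacle is the a priori control of the nonlocal difference $A_k$ and the Lagrange--multiplier difference $B_k$: one must expand $\varepsilon_{2,k}^{-2}-\varepsilon_{1,k}^{-2}=\varepsilon_k^{-3}\|\bar u_{2,k}-\bar u_{1,k}\|_{L^\infty(\Omega)}\int_\Omega\nabla\bar\xi_k\cdot\nabla(\bar u_{1,k}+\bar u_{2,k})$ and $\mu_{2,k}-\mu_{1,k}$ (via the explicit formula for $\mu_{i,k}$), then bound the resulting integrals by substituting the Euler--Lagrange equation for $\bar u_{2,k}$ and carefully tracking the $\varepsilon_k$--powers produced by the $L^1$-- and $L^q$--norms of the concentrating profiles, so as to avoid any circular absorption against $b\varepsilon_k^2\|\nabla\bar\xi_k\|_{L^2}^2$. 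Once $A_k$ and $B_k$ are known to grow at most polynomially in $\varepsilon_k^{-1}$, the remainder is a routine maximum--principle and elliptic--estimate argument fed by the exponential decay in Lemma \ref{lem:w-a-k-Q-2-converge-behavior}(ii).
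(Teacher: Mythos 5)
Your proposal is correct in outline, but it reaches the boundary estimate by a genuinely different route than the paper. The paper first symmetrizes the two equations for $\bar\xi_k$ (adding the versions with coefficients $\varepsilon_{1,k}^{-2}$ and $\varepsilon_{2,k}^{-2}$, see \eqref{eqn:bar-xi-k-equation}), so that upon testing against $\bar\xi_k$ the dangerous nonlocal Kirchhoff difference becomes the nonnegative square $\varepsilon_k\big(\int_\Omega\nabla(\bar u_{1,k}+\bar u_{2,k})\cdot\nabla\bar\xi_k\big)^2$ in \eqref{eqn:Delta-bar-u-i} and can simply be discarded; this yields the \emph{volume} estimate \eqref{eqn:estimate-xi-k-R-N}, and the sphere estimate then follows from the radius-averaging lemma \cite[Lemma 4.5]{Cao-2015-CV} (hence only \emph{some} good $\delta$ is produced). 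You instead keep the unsymmetrized equation, bound $A_k$ and $B_k$ directly, and then run a barrier/maximum-principle argument off Lemma \ref{lem:w-a-k-Q-2-converge-behavior}($ii$) to get pointwise exponential decay of $\bar\xi_k$ and $\nabla\bar\xi_k$ away from $x_1$, finally integrating over the sphere; this is more machinery but gives a stronger conclusion (exponentially small boundary integrals for every $\delta$ in a range). The one step you rightly flag as delicate is exactly where your route could collapse: if you estimate $A_k\int\nabla\bar u_{2,k}\cdot\nabla\bar\xi_k$ by Cauchy--Schwarz on the gradients, the resulting term is of size $b\varepsilon_k^2\|\nabla\bar\xi_k\|_{L^2}^2$ with a constant that cannot be absorbed into the leading term, which is precisely why the paper symmetrizes. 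The fix consistent with your sketch is to integrate by parts (legitimate since $\bar\xi_k\in H_0^1(\Omega)$) and pair $\Delta\bar u_{i,k}$ against $\|\bar\xi_k\|_{L^\infty(\Omega)}=1$: the Euler--Lagrange equation gives $\|\Delta\bar u_{i,k}\|_{L^1(\Omega)}\le C\varepsilon_k$, whence $|A_k|\le C\varepsilon_k^2$ and the nonlocal term contributes only $O(\varepsilon_k^3)$ to the energy identity; with that done, your a priori bounds (indeed the sharper $\|\nabla\bar\xi_k\|_{L^2}^2\le C\varepsilon_k$) follow and the rest of your comparison and interior-gradient argument goes through.
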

\begin{proof}
In view of \eqref{eqn:bar-u-i-k-equation}, we can infer that
$\bar\xi_k(x)$ satisfies the equations
\begin{equation}\label{eqn:bar-xi-k-equation-1}\begin{aligned}
&\ \ -(a_k+b\varepsilon_{2,k}^{-2})\varepsilon_k^4\Delta \bar\xi_k-\frac{
b(\varepsilon_{2,k}^{-2}-\varepsilon_{1,k}^{-2})\varepsilon_k^4\Delta \bar u_{1,k}}{\|\bar u_{2,k}-\bar u_{1,k}\|_{L^{\infty}(\Omega)}}
+\varepsilon_k^4V(x)\bar\xi_k\\
&=\mu_{1,k}\varepsilon_k^4 \bar\xi_k+
\frac{(\mu_{2,k}-\mu_{1,k})\varepsilon_k^4 \bar u_{2,k}}{\|\bar u_{2,k}-\bar u_{1,k}\|_{L^{\infty}(\Omega)}}+
\beta^*\frac{\bar u_{2,k}^{1+\frac{8}{3}}-\bar u_{1,k}^{1+\frac{8}{3}}}{\|\bar u_{2,k}-\bar u_{1,k}\|_{L^{\infty}(\Omega)}}\  \mbox{in}\ {\Omega}
\end{aligned}
\end{equation}
and
\begin{equation*}\begin{aligned}
&\ \ -(a_k+b\varepsilon_{1,k}^{-2})\varepsilon_k^4\Delta \bar\xi_k-\frac{
b(\varepsilon_{2,k}^{-2}-\varepsilon_{1,k}^{-2})\varepsilon_k^4
\Delta \bar u_{2,k}}{\|\bar u_{2,k}-\bar u_{1,k}\|_{L^{\infty}(\Omega)}}
+\varepsilon_k^4V(x)\bar\xi_k\\
&=\mu_{1,k}\varepsilon_k^4 \bar\xi_k+
\frac{(\mu_{2,k}-\mu_{1,k})\varepsilon_k^4 \bar u_{2,k}}{\|\bar u_{2,k}-\bar u_{1,k}\|_{L^{\infty}(\Omega)}}+
\beta^*\frac{\bar u_{2,k}^{1+\frac{8}{3}}-\bar u_{1,k}^{1+\frac{8}{3}}}{\|\bar u_{2,k}-\bar u_{1,k}\|_{L^{\infty}(\Omega)}}\  \mbox{in}\ {\Omega}.
\end{aligned}
\end{equation*}
Then there holds
\begin{equation}\label{eqn:bar-xi-k-equation}
\begin{aligned}
&\ \ -(2a_k+b\varepsilon_{1,k}^{-2}+b\varepsilon_{2,k}^{-2})
\varepsilon_k^4\Delta \bar\xi_k-\frac{
b(\varepsilon_{2,k}^{-2}-\varepsilon_{1,k}^{-2})\varepsilon_k^4
\Delta (\bar u_{1,k}+\bar u_{2,k})}{\|\bar u_{2,k}-\bar u_{1,k}\|_{L^{\infty}(\Omega)}}
+2\varepsilon_k^4V(x)\bar\xi_k\\
&=2\mu_{1,k}\varepsilon_k^4 \bar\xi_k+
2\frac{(\mu_{2,k}-\mu_{1,k})\varepsilon_k^4 \bar u_{2,k}}{\|\bar u_{2,k}-\bar u_{1,k}\|_{L^{\infty}(\Omega)}}+
2\beta^*\frac{\bar u_{2,k}^{1+\frac{8}{3}}-\bar u_{1,k}^{1+\frac{8}{3}}}{\|\bar u_{2,k}-\bar u_{1,k}\|_{L^{\infty}(\Omega)}}\  \mbox{in}\ {\Omega}.
\end{aligned}
\end{equation}
Note that
\begin{equation}\label{eqn:power-term-xi-theta}
\frac{\bar u_{2,k}^{1+\frac{8}{3}}-\bar u_{1,k}^{1+\frac{8}{3}}}{\|\bar u_{2,k}-\bar u_{1,k}\|_{L^{\infty}(\Omega)}}=(1+\frac{8}{3})\bar\xi_k (\theta\bar u_{2,k}+(1-\theta\bar u_{1,k}))^{\frac{8}{3}}
\end{equation}
for some $\theta\in (0,1)$ and $\|\xi_k\|_{L^{\infty}(\Omega)}=1$, then it follows from \eqref{eqn:tilde-u-conver-Q} that
\begin{equation}\label{eqn:bar-Xi-K-power}
\begin{aligned}
\beta^*\int_{\Omega}\frac{(\bar u_{2,k}^{1+\frac{8}{3}}-\bar u_{1,k}^{1+\frac{8}{3}})\bar\xi_k}{\|\bar u_{2,k}-\bar u_{1,k}\|_{L^{\infty}(\Omega)}}
&=\int_{\Omega}(1+\frac{8}{3})\bar\xi_k^2 (\theta\bar u_{2,k}+(1-\theta\bar u_{1,k}))^{\frac{8}{3}}\\
&\leq \varepsilon_k^3\int_{\Omega_k}(1+\frac{8}{3}) (\theta\tilde u_{2,k}+(1-\theta\tilde u_{1,k}))^{\frac{8}{3}}\\
&\leq C\varepsilon_k^3,
\end{aligned}
\end{equation}
where $\Omega_k:=\{x\in\mathbb R^3:\varepsilon_kx+z_{2,k}\in \Omega\}\rightarrow\mathbb R^3$ as $k\rightarrow\infty$.
Moreover, since
$$
\mu_{i,k}=2e(a_k)-a_k\int_{\Omega}|\nabla u_{i,k}|^2-\frac{\beta^*}{7}\int_{\Omega}|u_{i,k}|
^{2+\frac{8}{3}}-\int_\Omega V(x)u_{i,k}^2,
$$
we have
$$\begin{aligned}
\frac{\mu_{2,k}-\mu_{1,k}}{\|\bar u_{2,k}-\bar u_{1,k}\|_{L^{\infty}(\Omega)}}&=
-a_k\varepsilon_k^{-3}\int_\Omega \nabla (\bar u_{1,k}+\bar u_{2,k})\nabla\bar\xi_k-
\varepsilon_k^{-3}\int_\Omega V(x)(\bar u_{2,k}+\bar u_{1,k})\bar \xi_k\\
&\ \ -\frac{\beta^*}{7}(2+\frac{8}{3})
\varepsilon_k^{-7}
\int_\Omega \bar\xi_k (\theta\bar u_{2,k}+(1-\theta\bar u_{1,k}))^{1+\frac{8}{3}},
\end{aligned}
$$
where
$$
a_k\varepsilon_k^{-3}\int_\Omega \nabla (\bar u_{1,k}+\bar u_{2,k})\nabla\bar\xi_k
\leq \frac{1}{2}a_k \left(\varepsilon_k^{-2}\int_{\Omega_k} |\nabla (\tilde u_{1,k}+\tilde u_{2,k})|^2+\varepsilon_k^{-3}\int_\Omega |\nabla \bar\xi_k|^2\right),
$$
$$
\varepsilon_k^{-3}\int_\Omega V(x)(\bar u_{2,k}+\bar u_{1,k})\bar \xi_k\leq C\int_{\Omega_k}(\tilde u_{2,k}+\tilde u_{1,k})
$$
and
$$
\frac{4}{7}\beta^*(2+\frac{8}{3})\varepsilon_k^{-7}
\int_\Omega \bar\xi_k (\theta\bar u_{2,k}+(1-\theta\bar u_{1,k}))^{1+\frac{8}{3}}\leq C\varepsilon_k^{-4}\int_{\Omega_k} (\theta\tilde u_{2,k}+(1-\theta\tilde u_{1,k}))^{1+\frac{8}{3}}.
$$
Thus we can derive that
\begin{equation}\label{eqn:mu-2-mu-1-estimate}
\frac{(\mu_{2,k}-\mu_{1,k})\varepsilon_k^4}{\|\bar u_{2,k}-\bar u_{1,k}\|_{L^{\infty}(\Omega)}}\leq C(1+a_k\varepsilon_k^{4}\int_{\Omega}|\nabla\bar\xi_k|^2).
\end{equation}
Then due to the fact that
$$
\int_{\Omega}\bar u_{2,k}\bar\xi_k\leq\varepsilon_k^3\int_{\Omega_k}\tilde u_{2,k},
$$
we have
\begin{equation}\label{eqn:bar-mu-Xi-K-power}
\frac{(\mu_{2,k}-\mu_{1,k})\varepsilon_k^4}{\|\bar u_{2,k}-\bar u_{1,k}\|_{L^{\infty}(\Omega)}}\int_{\Omega}\bar u_{2,k}\bar\xi_k
\leq C\varepsilon_k^3+
C a_k\varepsilon_k^{4}
\int_{\Omega}|\nabla\bar\xi_k|^2.
\end{equation}
In addition, the direct calculation gives that
\begin{equation}\label{eqn:Delta-bar-u-i}
\begin{aligned}
-\frac{
(\varepsilon_{2,k}^{-2}-\varepsilon_{1,k}^{-2})\varepsilon_k^4}{\|\bar u_{2,k}-\bar u_{1,k}\|_{L^{\infty}(\Omega)}}
\int_{\Omega}
\Delta (\bar u_{1,k}+\bar u_{2,k}) \bar\xi_k
%&=\varepsilon_k^{4-\frac{3}{2}}\int_{\Omega}\nabla ( u_{1,k}+ u_{2,k})\nabla\bar\xi_k
%\int_{\Omega}
%\nabla (\bar u_{1,k}+\bar u_{2,k})\nabla \bar\xi_k\\
=\varepsilon_k\left(\int_{\Omega}\nabla (\bar u_{1,k}+\bar u_{2,k})\nabla\bar\xi_k\right)^2\geq0.
\end{aligned}
\end{equation}

Consequently, multiplying \eqref{eqn:bar-xi-k-equation} by $\bar\xi_k$ and integrating over $\Omega$, we can infer from \eqref{eqn:bar-Xi-K-power}-\eqref{eqn:Delta-bar-u-i} that
$$
\begin{aligned}
&\ \ (2a_k+b\varepsilon_k^{-2}\int_{\Omega_k}|\nabla \tilde u_{1,k}|^2+|\nabla \tilde u_{2,k}|^2)\varepsilon_k^4
\int_{\Omega}|\nabla\bar\xi_k|^2
+2\varepsilon_k^4\int_{\Omega}V(x)\bar\xi_k^2
-2\mu_{1,k}\varepsilon_k^4\int_{\Omega}\bar\xi_k^2\\
&\leq C\varepsilon_k^3+
C a_k\varepsilon_k^{4}
\int_{\Omega}|\nabla\bar\xi_k|^2.
\end{aligned}
$$
Utilizing \eqref{eqn:nabla-omega-1} and \eqref{eqn:mu-a-estimate},
we know that
\begin{equation}\label{eqn:mu-varepsilon-cover-property}
\int_{\Omega_k}|\nabla \tilde u_{2,k}|^2\rightarrow1\
\mbox{and}\
\mu_{i,k}\varepsilon_k^4\rightarrow-\frac{b}{6}\ \mbox{as}\ k\rightarrow\infty,
\end{equation}
then we deduce that
\begin{equation}\label{eqn:estimate-xi-k-R-N}
b\varepsilon_k^{2}
\int_{\Omega}|\nabla\bar\xi_k|^2+\varepsilon_k^4\int_{\Omega}V(x)\bar\xi_k^2
+\frac{b}{6}\int_{\Omega}\bar\xi_k^2
\leq C\varepsilon_k^{3}\ \mbox{as}\ k\rightarrow\infty.
\end{equation}
Hence, by \cite[Lemma 4.5]{Cao-2015-CV}, we conclude that \eqref{eqn:boundary-estimate-xi-k} holds.
\end{proof}
In the sequel, we define
$$
\xi_k(x):=\bar \xi_k(\varepsilon_kx+z_{2,k})
$$
and provide the proof of Theorem \ref{thm:local-uniqueness} by
deriving a desired contradiction.

\noindent\textbf{Proof of Theorem \ref{thm:local-uniqueness}.}
The proof is divided into the following four steps.

{ \bf\emph {Step} 1.}
There exist some constants $b_0,\bar b_0$ and $b_i$ $(i=1,2,3)$ such that
$ \xi_k(x)\rightarrow \xi_0(x)$ in $C_{loc}^1(\mathbb R^3)$ as $k\rightarrow\infty$, where
\begin{equation}\label{eqn:xi-0}
\xi_0=b_0 Q+\bar b_0x\cdot\nabla Q+\sum_{i=1}^3b_i\frac{\partial  Q}{\partial x_i}.
\end{equation}

Taking advantage of \eqref{eqn:bar-xi-k-equation-1}, \eqref{eqn:power-term-xi-theta} and the definition of $\tilde u_{i,k}(x)$, we know that $\xi_k(x)$ satisfies the equation
\begin{equation*}
\begin{aligned}
&\ \ -(a_k\varepsilon_k^2+b\int_{\Omega_k}|\nabla \tilde u_{2,k}|^2)\Delta \xi_k-\frac{
b(\varepsilon_{2,k}^{-2}-\varepsilon_{1,k}^{-2})\varepsilon_k^2
\Delta \tilde u_{1,k}}{\|\bar u_{2,k}-\bar u_{1,k}\|_{L^{\infty}(\Omega)}}
+\varepsilon_k^4V(\varepsilon_kx+z_{2,k})\xi_k\\
&=\mu_{1,k}\varepsilon_k^4 \xi_k+
\frac{(\mu_{2,k}-\mu_{1,k})\varepsilon_k^4 \tilde u_{2,k}}{\|\bar u_{2,k}-\bar u_{1,k}\|_{L^{\infty}(\Omega)}}
+\beta^*(1+\frac{8}{3})\xi_k (\theta\tilde u_{2,k}+(1-\theta\tilde u_{1,k}))^{\frac{8}{3}}\  \mbox{in}\ {\Omega_k}.
\end{aligned}
\end{equation*}
It can be simply written as
\begin{equation}\label{eqn:xi-k-equation}
\begin{aligned}
-(a_k\varepsilon_k^2+b\int_{\Omega_k}|\nabla \tilde u_{2,k}|^2)\Delta \xi_k+C_k(x)\xi_k=g_k(x) \  \mbox{in}\ {\Omega_k},
\end{aligned}
\end{equation}
where
$$
C_k(x)=\varepsilon_k^4V(\varepsilon_kx+z_{2,k})-\mu_{i,k}
\varepsilon_k^4 -
\beta^*(1+\frac{8}{3}) (\theta\tilde u_{2,k}+(1-\theta\tilde u_{1,k}))^{\frac{8}{3}}
$$
and
$$\begin{aligned}
g_k(x)&=-\frac{
b(\varepsilon_{1,k}^{-2}-\varepsilon_{2,k}^{-2})\varepsilon_k^2
\Delta \tilde u_{1,k}}{\|\bar u_{2,k}-\bar u_{1,k}\|_{L^{\infty}(\Omega)}}
+\frac{(\mu_{2,k}-\mu_{1,k})\varepsilon_k^4 \tilde u_{2,k}}{\|\bar u_{2,k}-\bar u_{1,k}\|_{L^{\infty}(\Omega)}}\\
&=\left(\int_{\Omega_k}\nabla (\tilde u_{1,k}+\tilde u_{2,k})\nabla\xi_k\right)\Delta \tilde u_{1,k}
+\frac{b}{2}\left(\frac{\varepsilon_{2,k}^{-4}
-\varepsilon_{1,k}^{-4}}{\|\bar u_{2,k}-\bar u_{1,k}\|_{L^{\infty}(\Omega)}}\right)\varepsilon_k^4 \tilde u_{2,k}\\
&\ \ -\frac{4}{7}\beta^*(2+\frac{8}{3})\left(
\int_{\Omega_k} \xi_k (\theta\tilde u_{2,k}+(1-\theta\tilde u_{1,k}))^{1+\frac{8}{3}}\right)\tilde u_{2,k}\\
&=\left(\int_{\Omega_k}\nabla (\tilde u_{1,k}+\tilde u_{2,k})\nabla\xi_k\right)\Delta \tilde u_{1,k}
-\frac{4}{7}\beta^*(2+\frac{8}{3})\left(
\int_{\Omega_k} \xi_k (\theta\tilde u_{2,k}+(1-\theta\tilde u_{1,k}))^{1+\frac{8}{3}}\right)\tilde u_{2,k}\\
&\ \ +\frac{b}{2}\left(\int_{\Omega_k} (|\nabla \tilde u_{1,k}|^2+|\nabla \tilde u_{2,k}|^2)\int_{\Omega_k} \nabla (\tilde u_{1,k}+\tilde u_{2,k})\nabla\xi_k\right)\tilde u_{2,k}.
\end{aligned}
$$
Here we have used the fact that
$$
\mu_{i,k}=e(a_k)+\frac{b}{2}\varepsilon_{i,k}^{-4}-\frac{4}{7}
\beta^*\int_{\Omega}u_{i,k}^{2+\frac{8}{3}}.
$$
From \eqref{eqn:estimate-xi-k-R-N}, we know that $\int_{\Omega_k}|\nabla \xi_k|^2\leq C$ as $k\rightarrow\infty$. Then
due to $\|\xi_k\|_{L^\infty(\Omega_k)}\leq1$, the standard elliptic regularity implies that $\|\xi_k\|_{C_{loc}^{1,\alpha}(\Omega_k)}\leq C$ for some $\alpha\in (0,1)$ and constant $C>0$ is independent of $k$. Therefore, the exists a subsequence of $\{a_k\}$ and a function
$\xi_0=\xi_0(x)$ such that $ \xi_k(x)\rightarrow \xi_0(x)$ in $C_{loc}^1(\mathbb R^3)$ as $k\rightarrow\infty$.
By \eqref{eqn:tilde-u-conver-Q}, \eqref{eqn:mu-varepsilon-cover-property} and the definition of $\beta^*$, we know that
$$
C_k(x)\rightarrow \frac{b}{6}-(1+\frac{8}{3})\beta^* Q^{\frac{8}{3}}|Q|_2^{-\frac{8}{3}}=\frac{b}{6}
-\frac{b}{2}(1+\frac{8}{3})Q^{\frac{8}{3}}
$$
and
$$\begin{aligned}
g_k(x)&\rightarrow 2|Q|_2^{-2}\left(\int_{\mathbb R^N}\nabla Q\nabla\xi_0\right)\Delta Q
-\frac{4}{7}\beta^*(2+\frac{8}{3})|Q|_2^{-(2+\frac{8}{3})}\left(
\int_{\mathbb R^N} \xi_0 Q^{1+\frac{8}{3}}\right)Q\\
&\ \ +4|Q|_2^{-2}\left(|Q|_2^{-2}\int_{\mathbb R^N} |\nabla Q|^2\int_{\mathbb R^N} \nabla Q\nabla\xi_0\right)Q\\
&=2|Q|_2^{-2}\left(\int_{\mathbb R^N}\nabla Q\nabla\xi_0\right)\Delta Q
+4|Q|_2^{-2}\left(\int_{\mathbb R^N} \nabla Q\nabla\xi_0\right)Q\\
&\ \ -\frac{2b}{7}(2+\frac{8}{3})|Q|_2^{-2}\left(
\int_{\mathbb R^N} \xi_0 Q^{1+\frac{8}{3}}\right)Q\ \mbox{in}\
C_{loc}^1(\mathbb R^3)\  \mbox{as}\  k\rightarrow\infty.
\end{aligned}
$$
Thus, we can conclude that $\xi_0$ satisfies
$$
\begin{aligned}
&\ \ -2 \Delta \xi_0+\frac{1}{3}\xi_0-
(1+\frac{8}{3})Q^{\frac{8}{3}}\xi_0\\
&=\frac{4}{b}|Q|_2^{-2}\left(\int_{\mathbb R^N}\nabla Q\nabla\xi_0\right)\Delta Q
+\frac{8}{b}|Q|_2^{-2}\left(\int_{\mathbb R^N} \nabla Q\nabla\xi_0\right)Q
-\frac{4}{7}(2+\frac{8}{3})|Q|_2^{-2}\left(
\int_{\mathbb R^N} \xi_0 Q^{1+\frac{8}{3}}\right)Q\\
&:=\sigma_1 \Delta Q+2\sigma_1 Q-\sigma_2 Q.
\end{aligned}
$$
Define the operator $L:=-2\Delta+\frac{1}{3}-(1+\frac{8}{3})Q^{\frac{8}{3}}$.
Note that
$$
 L(x\cdot\nabla Q)=-4\Delta Q\ \mbox{and}\ L(Q+\frac{4}{3}x\cdot\nabla Q)=-\frac{8}{9} Q.
$$
Then we have that
\begin{equation}\label{eqn:xi-0-definition-1}
\begin{aligned}
L\xi_0&=-\frac{1}{4}\sigma_1L(x\cdot\nabla Q)
-\frac{9}{4}\sigma_1 L(Q+\frac{4}{3}x\cdot\nabla Q)+
\frac{9}{8}\sigma_2 L(Q+\frac{4}{3}x\cdot\nabla Q)\\
&=\left(-\frac{13}{4}\sigma_1+\frac{3}{2}\sigma_2\right) L(x\cdot\nabla Q)+\left(-\frac{9}{4}\sigma_1+\frac{9}{8}\sigma_2\right) L Q.
\end{aligned}
\end{equation}
It is well known that the unique  positive radial solution $Q$ of equation $-2\Delta u+\frac{1}{3}u-|u|^{\frac{8}{3}}u=0$
is non-degenerate, see for example \cite{Oh-1990}. This means that
$$
Ker L=span\left\{\frac{\partial  Q}{\partial x_1},\frac{\partial  Q}{\partial x_2},\frac{\partial  Q}{\partial x_3}\right\}.
$$
Combining with \eqref{eqn:xi-0-definition-1}, we conclude that there exist some constants $b_0,\bar b_0$ and $ b_i$ $(i=1,2,3)$ such that
\begin{equation*}
\xi_0=b_0 Q+\bar b_0x\cdot\nabla Q+\sum_{i=1}^3b_i\frac{\partial  Q}{\partial x_i},
\end{equation*}
that is, \eqref{eqn:xi-0} holds. In particular,
\begin{equation}\label{eqn:b-0-bar-b-0-definition}
b_0=\left(-\frac{9}{4}\sigma_1+\frac{9}{8}\sigma_2\right) \  \mbox{and}\ \bar b_0=\left(-\frac{13}{4}\sigma_1+\frac{3}{2}\sigma_2\right).
\end{equation}

{ \bf\emph {Step} 2.} The local Pohozaev identity holds
\begin{equation}\label{eqn:local-Pohozaev}
\bar b_0\int_{\mathbb R^3}\frac{\partial|x|^p}{\partial x_j} (x\cdot\nabla Q^2)=\sum_{i=1}^3\int_{\mathbb R^3}b_i\frac{\partial^2  |x|^p}{\partial x_j\partial x_i}Q^2.
\end{equation}

Multiplying \eqref{eqn:bar-u-i-k-equation} by
$\frac{\partial u_{i,k}}{\partial x_j}$ and
integrating over a small ball $B_\delta(z_{2,k})\subset \Omega$, we deduce that
\begin{equation}\label{eqn:bar-u-i-k-equation-int}\small
\begin{aligned}
&\ \ -(a_k+b\varepsilon_{i,k}^{-2})\varepsilon_k^4
\int_{B_\delta(z_{2,k})}\Delta \bar u_{i,k}\frac{\partial u_{i,k}}{\partial x_j}
+\varepsilon_k^4\int_{B_\delta(z_{2,k})}V(x)\bar u_{i,k}\frac{\partial u_{i,k}}{\partial x_j}\\
&=\mu_{i,k}\varepsilon_k^4\int_{B_\delta(z_{2,k})}\bar u_{i,k}\frac{\partial u_{i,k}}{\partial x_j}+\beta^*\int_{B_\delta(z_{2,k})}\bar u_{i,k}^{1+\frac{8}{3}}\frac{\partial u_{i,k}}{\partial x_j}\\
&=\frac{1}{2}\mu_{i,k}\varepsilon_k^4\int_{\partial B_\delta(z_{2,k})}\bar u_{i,k}^2\nu_j\mathrm dS+\frac{3\beta^*}{14}\int_{B_\delta(z_{2,k})}\bar u_{i,k}^{2+\frac{8}{3}}\nu_j\mathrm dS,
\end{aligned}
\end{equation}
where $\nu=(\nu_1,\nu_2,\nu_3)$ denotes the outward unit normal of $B_\delta(z_{2,k})$.
Note that
\begin{equation*}\small
\begin{aligned}
&\ \ -(a_k+b\varepsilon_{i,k}^{-2})\varepsilon_k^4
\int_{B_\delta(z_{2,k})}\Delta \bar u_{i,k}\frac{\partial u_{i,k}}{\partial x_j}\\
&=-(a_k+b\varepsilon_{i,k}^{-2})\varepsilon_k^4\left( \int_{\partial B_\delta\left(z_{2, k}\right)} \frac{\partial \bar{u}_{i, k}}{\partial x_j} \frac{\partial \bar{u}_{i, k}}{\partial \nu} \mathrm dS+
\int_{ B_\delta\left(z_{2, k}\right)}\nabla \bar{u}_{i, k}\nabla \frac{\partial \bar{u}_{i, k}}{\partial x_j}\right)\\
&=-(a_k+b\varepsilon_{i,k}^{-2})\varepsilon_k^4\left( \int_{\partial B_\delta\left(z_{2, k}\right)} \frac{\partial \bar{u}_{i, k}}{\partial x_j} \frac{\partial \bar{u}_{i, k}}{\partial \nu} \mathrm dS-\frac{1}{2} \int_{\partial B_\delta\left(z_{2, k}\right)}\left|\nabla \bar{u}_{i, k}\right|^2 \nu_j \mathrm dS\right)
\end{aligned}
\end{equation*}
and
$$
\varepsilon_k^4 \int_{B_\delta\left(z_{2, k}\right)} V(x) \frac{\partial \bar{u}_{i, k}}{\partial x_j} \bar{u}_{i, k}=\frac{\varepsilon_k^4}{2} \int_{\partial B_\delta\left(z_{2, k}\right)} V(x) \bar{u}_{i, k}^2 \nu_j \mathrm d S-\frac{\varepsilon_k^4}{2} \int_{B_\delta\left(z_{2, k}\right)} \frac{\partial V(x)}{\partial x_j} \bar{u}_{i, k}^2.
$$
Then from \eqref{eqn:bar-u-i-k-equation-int}, we have
$$
\begin{aligned}
&\ \ \varepsilon_k^4 \int_{B_\delta\left(z_{2, k}\right)} \frac{\partial V(x)}{\partial x_j} \bar{u}_{i, k}^2\\
&=-2(a_k+b\varepsilon_{i,k}^{-2})\varepsilon_k^4\left( \int_{\partial B_\delta\left(z_{2, k}\right)} \frac{\partial \bar{u}_{i, k}}{\partial x_j} \frac{\partial \bar{u}_{i, k}}{\partial \nu} \mathrm dS-\frac{1}{2} \int_{\partial B_\delta\left(z_{2, k}\right)}\left|\nabla \bar{u}_{i, k}\right|^2 \nu_j \mathrm dS\right)\\
&\ \ -\mu_{i,k}\varepsilon_k^4\int_{\partial B_\delta(z_{2,k})}\bar u_{i,k}^2\nu_j\mathrm dS-\frac{3\beta^*}{7}\int_{B_\delta(z_{2,k})}\bar u_{i,k}^{2+\frac{8}{3}}\nu_j\mathrm dS
+\frac{\varepsilon_k^4}{2} \int_{\partial B_\delta\left(z_{2, k}\right)} V(x) \bar{u}_{i, k}^2 \nu_j \mathrm d S.
\end{aligned}
$$

It follows that
$$
\begin{aligned}
&\ \ \varepsilon_k^4 \int_{B_\delta\left(z_{2, k}\right)} \frac{\partial V(x)}{\partial x_j} (\bar{u}_{i, k}+\bar{u}_{i, k})\bar \xi_k\\
%&=-2(a_k+b\varepsilon_{i,k}^{-2})\varepsilon_k^4\left( \int_{\partial B_\delta\left(z_{2, k}\right)} \frac{\partial \bar{u}_{i, k}}{\partial x_j} \frac{\partial \bar{u}_{i, k}}{\partial \nu} \mathrm dS-\frac{1}{2} \int_{\partial B_\delta\left(z_{2, k}\right)}\left|\nabla \bar{u}_{i, k}\right|^2 \nu_j \mathrm dS\right)\\
%&\ \ -\mu_{i,k}\varepsilon_k^4\int_{\partial B_\delta(z_{2,k})}\bar u_{i,k}^2\nu_j\mathrm dS-\frac{3\beta^*}{7}\int_{B_\delta(z_{2,k})}\bar u_{i,k}^{2+\frac{8}{3}}\nu_j\mathrm dS+\frac{\varepsilon_k^4}{2} \int_{\partial B_\delta\left(z_{2, k}\right)} V(x) \bar{u}_{i, k}^2 \nu_j \mathrm d S\\
&=-2(a_k\varepsilon_k^2+b\varepsilon_k^2\varepsilon_{2,k}^{-2})\varepsilon_k^2\Big[ \int_{\partial B_\delta\left(z_{2, k}\right)} \frac{\partial \bar{u}_{2, k}}{\partial x_j} \frac{\partial \bar{\xi_k}}{\partial \nu} \mathrm dS+\int_{\partial B_\delta\left(z_{2, k}\right)}\frac{\partial \bar{\xi_k}}{\partial x_j} \frac{\partial \bar{u}_{1, k}}{\partial \nu} \mathrm dS\\
&\ \ -\frac{1}{2} \int_{\partial B_\delta\left(z_{2, k}\right)}(\nabla \bar{u}_{1, k}+\bar{u}_{2, k})\nabla\bar\xi_k \nu_j \mathrm dS\Big]\\
&\ \ -\frac{
b(\varepsilon_{2,k}^{-2}-\varepsilon_{1,k}^{-2})\varepsilon_k^4}{\|\bar u_{2,k}-\bar u_{1,k}\|_{L^{\infty}(\Omega)}}\left( \int_{\partial B_\delta\left(z_{2, k}\right)} \frac{\partial \bar{u}_{1, k}}{\partial x_j} \frac{\partial \bar{u}_{1, k}}{\partial \nu} \mathrm dS-\frac{1}{2} \int_{\partial B_\delta\left(z_{2, k}\right)}\left|\nabla \bar{u}_{1, k}\right|^2 \nu_j \mathrm dS\right)\\
&\ \ +\Big[-\mu_{1,k}\varepsilon_k^4\int_{\partial B_\delta(z_{2,k})}(\bar u_{1,k}+\bar u_{2,k})\bar \xi_k\nu_j\mathrm dS-\frac{(\mu_{2,k}-\mu_{1,k})\varepsilon_k^4}{\|\bar u_{2,k}-\bar u_{1,k}\|_{L^{\infty}(\Omega)}}\int_{\partial B_\delta(z_{2,k})}\bar u_{2,k}^2\nu_j\mathrm dS\\
&\ \ -\frac{3\beta^*}{7}(2+\frac{8}{3})
\int_\Omega \bar\xi_k (\theta\bar u_{2,k}+(1-\theta\bar u_{1,k}))^{1+\frac{8}{3}}+\frac{\varepsilon_k^4}{2} \int_{\partial B_\delta\left(z_{2, k}\right)} V(x) (\bar{u}_{1, k}+\bar{u}_{2, k}) \xi_k^2\nu_j \mathrm d S\Big]\\
&:=I_1+I_2+I_3
\end{aligned}
$$

In the following, we estimate $I_1$, $I_2$ and $I_3$.
Note that
$$
\varepsilon_{2,k}^{-2}=\varepsilon_k^{-2}\int_{\Omega_k}|\nabla \tilde u_{2,k}|^2, \ B_{\frac{\delta}{\varepsilon_k}}(0)\subset \Omega_k
$$
and
$$
\begin{aligned}
&\ \  \varepsilon_k^2 \int_{\partial B_\delta\left(z_{2, k}\right)}\left|\frac{\partial \bar{u}_{2, k}}{\partial x_j} \frac{\partial \bar{\xi}_k}{\partial \nu}\right| d S \\
&\leq \varepsilon_k\left(\int_{\partial B_\delta\left(z_{2, k}\right)}\left|\frac{\partial \bar{u}_{2, k}}{\partial x_j}\right|^2 d S\right)^{\frac{1}{2}}\left(\varepsilon_k^2 \int_{\partial B_\delta\left(z_{2, k}\right)}\left|\frac{\partial \bar{\xi}_k}{\partial \nu}\right|^2 d S\right)^{\frac{1}{2}}.
\end{aligned}
$$
Repeat this inequality for the remaining terms in $I_1$, and then we can use \eqref{eqn:boundary-estimate-xi-k} and the exponential decay in Lemma \ref{lem:w-a-k-Q-2-converge-behavior}-$(ii)$ to obtain that
$$
I_1\leq C\varepsilon_k^2e^{-\frac{C\delta}{\varepsilon_k}}
\ \mbox{as}\ k\rightarrow\infty.
$$
On the other hand, by the definition of $\varepsilon_{i,k}$ and \eqref{eqn:estimate-xi-k-R-N}, we know that
\begin{equation*}
\begin{aligned}
\frac{
(\varepsilon_{2,k}^{-2}-\varepsilon_{1,k}^{-2})\varepsilon_k^4}{\|\bar u_{2,k}-\bar u_{1,k}\|_{L^{\infty}(\Omega)}}
&=\varepsilon_k\int_{\Omega}\nabla ( \bar u_{1,k}+ \bar u_{2,k})\nabla\bar\xi_k\\
&\leq\left(\int_{\Omega}|\nabla (\bar u_{1,k}+\bar u_{2,k})|^2\right)^{\frac{1}{2}}
\left(\varepsilon_k^2\int_{\Omega}|\nabla\bar\xi_k|
^2\right)^{\frac{1}{2}}\\
&\leq C\varepsilon_k\ \mbox{as}\ k\rightarrow\infty.
\end{aligned}
\end{equation*}
Then we can also get that
$$
I_2\leq C\varepsilon_k^2e^{-\frac{C\delta}{\varepsilon_k}}
\ \mbox{as}\ k\rightarrow\infty.
$$
Similarly, in view of \eqref{eqn:mu-2-mu-1-estimate} and the fact that $|\bar\xi_k|$ are bounded uniformly in $k$, we can determine that
$$
I_3=o(e^{-\frac{C\delta}{\varepsilon_k}})\ \mbox{as}\ k\rightarrow\infty.
$$

Consequently, we have
\begin{equation}\label{eqn:partial-V-estimate}
o(e^{-\frac{C\delta}{\varepsilon_k}})=\varepsilon_k^4 \int_{B_\delta\left(z_{2, k}\right)} \frac{\partial V(x)}{\partial x_j} (\bar{u}_{i, k}+\bar{u}_{i, k})\bar \xi_k \ \mbox{as}\ k\rightarrow\infty.
\end{equation}
It follows from \eqref{eqn:V-potential-x-x-i} and $p_1=p$ that
$$
V(x)=h(x)\prod_{i=1}^n|x-x_i|^{p_i}=|x-x_1|^ph(x)\prod_{i=2}
^n|x-x_i|^{p_i}.
$$
Without loss of generality, we assume that
$$
\lim_{x\rightarrow x_1}h(x)\prod_{i=2}^n|x-x_i|^{p_i}=1.
$$
Then we can deduce that
$$
V(x)=|x-x_1|^p(1+o(1)),\ \frac{\partial V(y)}{\partial y_j}=(1+o(1))\frac{\partial |y-x_1|^p}{\partial y_j}
$$
and
\begin{equation}\label{eqn:V-nabla-V-relation}
V(x)+\frac{1}{2}x\cdot \nabla V(x)=(1+o(1))\frac{2+p}{2} |x-x_1|^p\ \mbox{as}\
x\rightarrow x_1.
\end{equation}
Thus, for small $\delta>0$, we can infer from \eqref{eqn:partial-V-estimate} that
\begin{equation}\label{eqn:pre-pohozaev-estimate}
\begin{aligned}
o(e^{-\frac{C\delta}{\varepsilon_k}})&=\varepsilon_k^4 \int_{B_\delta\left(z_{2, k}\right)} \frac{\partial V(x)}{\partial x_j} (\bar{u}_{1, k}+\bar{u}_{2, k})\bar \xi_k\\
&=\varepsilon_k^{6+p} \int_{B_{\frac{\delta}{\varepsilon_k}}(0)} (1+o(1))\frac{\partial |x-\frac{z_{2,k}-x_1}{\varepsilon_k}|^p}{\partial x_j} (\tilde{u}_{1, k}+\tilde{u}_{2, k}) \xi_k\ \mbox{as}\ k\rightarrow\infty.
\end{aligned}
\end{equation}
Recall that
$$
\frac{z_{2,k}-x_1}{\varepsilon_k}\rightarrow0\ \mbox{as}\ k\rightarrow \infty,
$$
then it follows from \eqref{eqn:xi-0} and \eqref{eqn:pre-pohozaev-estimate} that
$$
\begin{aligned}
0=2\int_{\mathbb R^3}\frac{\partial|x|^p}{\partial x_j} Q\xi_0
&=2\int_{\mathbb R^3}\frac{\partial|x|^p}{\partial x_j} Q\left[b_0 Q+\bar b_0x\cdot\nabla Q+\sum_{i=1}^3b_i\frac{\partial  Q}{\partial x_i}\right]\\
&=\bar b_0\int_{\mathbb R^3}\frac{\partial|x|^p}{\partial x_j} (x\cdot\nabla Q^2)-\sum_{i=1}^3\int_{\mathbb R^3}b_i\frac{\partial^2  |x|^p}{\partial x_j\partial x_i}Q^2,
\end{aligned}
$$
where we have used the fact $\int_{\mathbb R^3}\frac{\partial|x|^p}{\partial x_j} Q^2=0$. Namely, the local Pohozaev identity \eqref{eqn:local-Pohozaev}
 holds.

{ \bf\emph {Step} 3.} $b_0=\bar b_0=b_i=0,$ $i=1,2,3$.

Multiplying \eqref{eqn:bar-u-i-k-equation} by
$(x-z_{2,k})\cdot \nabla \bar u_{i,k}$ and integrating over $B_\delta(z_{2,k})\subset \Omega$, we know that
\begin{equation}\label{eqn:nabla-bar-u-i-k-equation-int}
\begin{aligned}
&-(a_k\varepsilon_k^2+b\varepsilon_{i,k}^{-2}\varepsilon_k^2)\varepsilon_k^2
\int_{B_\delta(z_{2,k})} \left[(x-z_{2,k})\cdot \nabla \bar u_{i,k}\right] \Delta \bar u_{i,k}\\
&=-\varepsilon_k^4\int_{B_\delta(z_{2,k})}V(x)\bar u_{i,k}\left[(x-z_{2,k})\cdot \nabla \bar u_{i,k}\right]\\
&+\mu_{i,k}\varepsilon_k^4\int_{B_\delta(z_{2,k})}\bar u_{i,k}\left[(x-z_{2,k})\cdot \nabla \bar u_{i,k}\right]+\beta^*\int_{B_\delta(z_{2,k})}\bar u_{i,k}^{1+\frac{8}{3}}\left[(x-z_{2,k})\cdot \nabla \bar u_{i,k}\right].\\
\end{aligned}
\end{equation}
According to the direct calculations, we have
$$
\nabla u \nabla (x\cdot \nabla u)=|\nabla u|^2+\frac{1}{2} x\cdot \nabla (|\nabla u|^2)\ \mbox{and}\ \nabla\cdot (|\nabla u|^2x)=x\cdot \nabla (|\nabla u|^2)+3|\nabla u|^2
$$
for $x\in \mathbb R^3$ and $u:\mathbb R^3\mapsto \mathbb R$.
Then we can deduce that
\begin{equation}\label{eqn:nabla-Delta-bar-u-i-k-1}
\small
\begin{aligned}
&\ \  -\varepsilon_k^2 \int_{B_\delta\left(z_{2, k}\right)}\left[\left(x-z_{2, k}\right) \cdot \nabla \bar{u}_{i, k}\right] \Delta \bar{u}_{i, k} \\
&=-\varepsilon_k^2 \int_{\partial B_\delta\left(z_{2, k}\right)} \frac{\partial \bar{u}_{i, k}}{\partial \nu}\left[\left(x-z_{2, k}\right) \cdot \nabla \bar{u}_{i, k}\right]+\varepsilon_k^2 \int_{B_\delta\left(z_{2, k}\right)} \nabla \bar{u}_{i, k} \nabla\left[\left(x-z_{2, k}\right) \cdot \nabla \bar{u}_{i, k}\right] \\
&=-\varepsilon_k^2 \int_{\partial B_\delta\left(z_{2, k}\right)} \frac{\partial \bar{u}_{i, k}}{\partial \nu}\left[\left(x-z_{2, k}\right) \cdot \nabla \bar{u}_{i, k}\right]+\frac{\varepsilon_k^2}{2} \int_{\partial B_\delta\left(z_{2, k}\right)}\left[\left(x-z_{2, k}\right) \cdot \nu\right]\left|\nabla \bar{u}_{i, k}\right|^2\\
&\ \ -\frac{1}{2}\varepsilon_k^2\int_{B_\delta\left(z_{2, k}\right)} |\nabla \bar{u}_{i, k}|^2.
\end{aligned}
\end{equation}
Multiplying \eqref{eqn:bar-u-i-k-equation} by
$\bar u_{i,k}$ and integrating over $B_\delta(z_{2,k})$, we know that
\begin{equation}
\label{eqn:nabla-Delta-bar-u-i-k-2}
\begin{aligned}
&\ \ -(a_k\varepsilon_k^2+b\varepsilon_{i,k}^{-2}\varepsilon_k^2)\varepsilon_k^2
\int_{B_\delta(z_{2,k})} \bar u_{i,k} \Delta \bar u_{i,k}\\
&=-\left(a_k\varepsilon_k^2+b\varepsilon_{i,k}^{-2}\varepsilon_k^2
\right)\left[
\frac{1}{2}\int_{\partial B_\delta(z_{2,k})}  \nabla \bar u_{i,k}^2\cdot \nu \mathrm dS-
\int_{B_\delta(z_{2,k})} |\nabla \bar u_{i,k}|^2\right]
\\
&=-\varepsilon_k^4\int_{B_\delta(z_{2,k})}V(x)\bar u_{i,k}^2+\mu_{i,k}\varepsilon_k^4\int_{B_\delta(z_{2,k})}\bar u_{i,k}^2+\beta^*\int_{B_\delta(z_{2,k})}\bar u_{i,k}^{2+\frac{8}{3}}.\\
\end{aligned}
\end{equation}
On  the other hand,
\begin{equation}\label{eqn:V-ar-u-i-k-equation-int}
\begin{aligned}
&\ \ -\varepsilon_k^4\int_{B_\delta(z_{2,k})}V(x)\bar u_{i,k}\left[(x-z_{2,k})\cdot \nabla \bar u_{i,k}\right]+\mu_{i,k}\varepsilon_k^4\int_{B_\delta(z_{2,k})}\bar u_{i,k}\left[(x-z_{2,k})\cdot \nabla \bar u_{i,k}\right]\\
&\ \ \ \ +\beta^*\int_{B_\delta(z_{2,k})}\bar u_{i,k}^{1+\frac{8}{3}}\left[(x-z_{2,k})\cdot \nabla \bar u_{i,k}\right]\\
&=-\frac{\varepsilon_k^4}{2} \int_{B_\delta\left(z_{2, k}\right)} \bar{u}_{i, k}^2\left[3\left(\mu_{i, k}-V(x)\right)-\left(x-z_{2, k}\right) \cdot \nabla V(x)\right]\\
&\ \ \ \ +\frac{\varepsilon_k^4}{2} \int_{\partial B_\delta\left(z_{2, k}\right)} \bar{u}_{i, k}^2\left[\mu_{i, k}-V(x)\right]\left(x-z_{2, k}\right) \cdot\nu d S\\
&\ \ \ \ +\frac{3\beta^*}{14}\int_{\partial B_\delta(z_{2,k})}\bar u_{i,k}^{2+\frac{8}{3}}(x-z_{2,k})\cdot \nu-\frac{9\beta^*}{14}\int_{B_\delta(z_{2,k})}\bar u_{i,k}^{2+\frac{8}{3}}.
\end{aligned}
\end{equation}
Then from
\eqref{eqn:nabla-bar-u-i-k-equation-int}-\eqref{eqn:V-ar-u-i-k-equation-int}, we can conclude that
\begin{equation*}\small
\begin{aligned}
&\ \ -(a_k\varepsilon_k^2+b\varepsilon_{i,k}^{-2}\varepsilon_k^2)\varepsilon_k^2
\int_{B_\delta(z_{2,k})} \left[(x-z_{2,k})\cdot \nabla \bar u_{i,k}\right] \Delta \bar u_{i,k}\\
&=(a_k\varepsilon_k^2+b\varepsilon_{i,k}^{-2}\varepsilon_k^2)
\Big(-\varepsilon_k^2 \int_{\partial B_\delta\left(z_{2, k}\right)} \frac{\partial \bar{u}_{i, k}}{\partial \nu}\left[\left(x-z_{2, k}\right) \cdot \nabla \bar{u}_{i, k}\right] \\
&\ \ +\frac{\varepsilon_k^2}{2} \int_{\partial B_\delta\left(z_{2, k}\right)}\left[\left(x-z_{2, k}\right) \cdot \nu\right]\left|\nabla \bar{u}_{i, k}\right|^2\Big)-
(a_k\varepsilon_k^2+b\varepsilon_{i,k}^{-2}\varepsilon_k^2)\Big(
\frac{1}{2}\varepsilon_k^2\int_{B_\delta\left(z_{2, k}\right)} |\nabla \bar{u}_{i, k}|^2\Big)\\
&=(a_k\varepsilon_k^2+b\varepsilon_{i,k}^{-2}\varepsilon_k^2)
\Big(-\varepsilon_k^2 \int_{\partial B_\delta\left(z_{2, k}\right)} \frac{\partial \bar{u}_{i, k}}{\partial \nu}\left[\left(x-z_{2, k}\right) \cdot \nabla \bar{u}_{i, k}\right]\\
&\ \ +\frac{\varepsilon_k^2}{2} \int_{\partial B_\delta\left(z_{2, k}\right)}\left[\left(x-z_{2, k}\right) \cdot \nu\right]\left|\nabla \bar{u}_{i, k}\right|^2\Big)
+\left(a_k\varepsilon_k^2+b\varepsilon_{i,k}^{-2}\varepsilon_k^2
\right)\frac{1}{4}\int_{\partial B_\delta(z_{2,k})}  \nabla \bar u_{i,k}^2\cdot \nu \mathrm dS\\
&\ \ -\frac{1}{2}\left(-\varepsilon_k^4\int_{B_\delta(z_{2,k})}V(x)\bar u_{i,k}^2+\mu_{i,k}\varepsilon_k^4\int_{B_\delta(z_{2,k})}\bar u_{i,k}^2+\beta^*\int_{B_\delta(z_{2,k})}\bar u_{i,k}^{2+\frac{8}{3}}\right)\\
&=-\frac{\varepsilon_k^4}{2} \int_{B_\delta\left(z_{2, k}\right)} \bar{u}_{i, k}^2\left[3\left(\mu_{i, k}-V(x)\right)-\left(x-z_{2, k}\right) \cdot \nabla V(x)\right]\\
&\ \ +\frac{\varepsilon_k^4}{2} \int_{\partial B_\delta\left(z_{2, k}\right)} \bar{u}_{i, k}^2\left[\mu_{i, k}-h(x)\right]\left(x-z_{2, k}\right) \cdot\nu d S\\
&\ \ +\frac{3\beta^*}{14}\int_{\partial B_\delta(z_{2,k})}\bar u_{i,k}^{2+\frac{8}{3}}(x-z_{2,k})\cdot \nu-\frac{9\beta^*}{14}\int_{B_\delta(z_{2,k})}\bar u_{i,k}^{2+\frac{8}{3}}.
\end{aligned}
\end{equation*}
According to the last equality above, we have
\begin{equation}\label{eqn:nabla-bar-u-i-k-equation-int-3}\small
\begin{aligned}
&\ \ (a_k\varepsilon_k^2+b\varepsilon_{i,k}^{-2}\varepsilon_k^2)
\Big(-\varepsilon_k^2 \int_{\partial B_\delta\left(z_{2, k}\right)} \frac{\partial \bar{u}_{i, k}}{\partial \nu}\left[\left(x-z_{2, k}\right) \cdot \nabla \bar{u}_{i, k}\right]\\
&\ \ +\frac{\varepsilon_k^2}{2} \int_{\partial B_\delta\left(z_{2, k}\right)}\left[\left(x-z_{2, k}\right) \cdot \nu\right]\left|\nabla \bar{u}_{i, k}\right|^2\Big) +\frac{1}{4}\left(a_k\varepsilon_k^2+b\varepsilon_{i,k}^{-2}\varepsilon_k^2
\right)\int_{\partial B_\delta(z_{2,k})}  \nabla \bar u_{i,k}^2\cdot \nu \mathrm dS\\
&=-\varepsilon_k^4 \int_{B_\delta\left(z_{2, k}\right)} \bar{u}_{i, k}^2\left(\mu_{i, k}-V(x)\right)-\frac{\beta^*}{7}\int_{B_\delta(z_{2,k})}\bar u_{i,k}^{2+\frac{8}{3}}\\
&\ \ +\frac{\varepsilon_k^4}{2} \int_{B_\delta\left(z_{2, k}\right)} \bar{u}_{i, k}^2\left[\left(x-z_{2, k}\right) \cdot \nabla V(x)\right]+\frac{\varepsilon_k^4}{2} \int_{\partial B_\delta\left(z_{2, k}\right)} \bar{u}_{i, k}^2\left[\mu_{i, k}-h(x)\right]\left(x-z_{2, k}\right) \cdot\nu d S\\
&\ \ +\frac{3\beta^*}{14}\int_{\partial B_\delta(z_{2,k})}\bar u_{i,k}^{2+\frac{8}{3}}(x-z_{2,k})\cdot \nu\mathrm dS.
\end{aligned}
\end{equation}
It can be simply written as
\begin{equation}\label{eqn:nabla-bar-u-i-k-equation-int-4}\small
\begin{aligned}
T_{\partial B_\delta\left(z_{2, k}\right)}^i&=-\varepsilon_k^4 \int_{B_\delta\left(z_{2, k}\right)} \bar{u}_{i, k}^2\left(\mu_{i, k}-V(x)\right)-\frac{\beta^*}{7}\int_{B_\delta(z_{2,k})}\bar u_{i,k}^{2+\frac{8}{3}}\\
&\ \ +\frac{\varepsilon_k^4}{2} \int_{B_\delta\left(z_{2, k}\right)} \bar{u}_{i, k}^2x \cdot \nabla V(x)
-\frac{\varepsilon_k^4}{2} \int_{B_\delta\left(z_{2, k}\right)} \bar{u}_{i, k}^2z_{2, k} \cdot \nabla V(x),
\end{aligned}
\end{equation}
where $T_{\partial B_\delta\left(z_{2, k}\right)}^i$ represents the sum of all boundary integrals in \eqref{eqn:nabla-bar-u-i-k-equation-int-3}.

Recall that
$$
\mu_{i,k}=e(a_k)+\frac{b}{2}\varepsilon_{i,k}^{-4}-\frac{4}{7}
\beta^*\int_{\Omega}u_{i,k}^{2+\frac{8}{3}},
$$
which implies that
$$
\mu_{i,k}\varepsilon_k^4\int_{\Omega}\bar u_{i,k}^2=\varepsilon_k^7e(a_k)+\frac{b}{2}\varepsilon_{i,k}^{-4}
\varepsilon_k^7-\frac{4}{7}
\beta^*\int_{\Omega}\bar u_{i,k}^{2+\frac{8}{3}}.
$$
Thus it follows from \eqref{eqn:nabla-bar-u-i-k-equation-int-4} that
\begin{equation}\label{eqn:T-i-int}
\begin{aligned}
T_{\partial B_\delta\left(z_{2, k}\right)}^i
&=-\varepsilon_k^7e(a_k)-\frac{b}{2}\varepsilon_{i,k}^{-4}
\varepsilon_k^7+\frac{3}{7}
\beta^*\int_{\Omega}\bar u_{i,k}^{2+\frac{8}{3}}+
\varepsilon_k^4\int_{B_\delta\left(z_{2, k}\right)} V(x)\bar u_{i,k}^2\\
&\ \ +\frac{\varepsilon_k^4}{2} \int_{B_\delta\left(z_{2, k}\right)} \bar{u}_{i, k}^2x \cdot \nabla V(x)
-\frac{\varepsilon_k^4}{2} \int_{B_\delta\left(z_{2, k}\right)} \bar{u}_{i, k}^2z_{2, k} \cdot \nabla V(x)
+T^i_{\Omega\backslash B_\delta(z_{2,k})},
\end{aligned}
\end{equation}
where $$T^i_{\Omega\backslash B_\delta(z_{2,k})}:=\varepsilon_k^4\mu_{i, k} \int_{\Omega\backslash B_\delta\left(z_{2, k}\right)} \bar{u}_{i, k}^2+\frac{\beta^*}{7}\int_{\Omega\backslash B_\delta(z_{2,k})}\bar u_{i,k}^{2+\frac{8}{3}}.
$$
Hence, by \eqref{eqn:V-nabla-V-relation} and \eqref{eqn:T-i-int} , we have
$$\small
\begin{aligned}
&\ \ T_{\partial B_\delta\left(z_{2, k}\right)}^2-T_{\partial B_\delta\left(z_{2, k}\right)}^1
-\left(T^2_{\Omega\backslash B_\delta(z_{2,k})}-T^1_{\Omega\backslash B_\delta(z_{2,k})}\right)+
\frac{\varepsilon_k^4}{2} \int_{B_\delta\left(z_{2, k}\right)} z_{2, k} \cdot \nabla V(x)(\bar u_{2,k}-\bar u_{1,k})\bar \xi_k\\
&=\frac{3\beta^*}{7}\int_{\Omega}\left(\bar u_{2,k}^{2+\frac{8}{3}}-\bar u_{1,k}
^{2+\frac{8}{3}}\right)-\frac{b}{2}\varepsilon_k^7
(\varepsilon_{2,k}^{-4}-\varepsilon_{1,k}^{-4})
+\varepsilon_k^4\int_{B_\delta\left(z_{2, k}\right)} \left( V(x)+\frac{1}{2}x\cdot \nabla V(x)\right)(\bar u_{2,k}-\bar u_{1,k})\bar \xi_k\\
&=2\beta^*\varepsilon_k^{-4}\int_{\Omega_k} \xi_k (\theta\tilde u_{2,k}+(1-\theta\tilde u_{1,k}))^{1+\frac{8}{3}}-
\frac{b}{2}\varepsilon_k^{3}
\left(\int_{\Omega_k} (|\nabla \tilde u_{1,k}|^2+|\nabla \tilde u_{2,k}|^2)\int_{\Omega_k} \nabla (\tilde u_{1,k}+\tilde u_{2,k})\nabla\xi_k\right)\\
&\ \ +\varepsilon_k^{7+p}\int_{B_\delta\left(z_{2, k}\right)} (1+o(1)|x-\frac{z_{2,k}-x_1}{\varepsilon_{k}}|(\tilde u_{2,k}-\tilde u_{1,k})\tilde \xi_k.
\end{aligned}
$$
Similar to \emph {Step} 2, we can derive that
$$
T_{\partial B_\delta\left(z_{2, k}\right)}^2-T_{\partial B_\delta\left(z_{2, k}\right)}^1
-\left(T^2_{\Omega\backslash B_\delta(z_{2,k})}-T^1_{\Omega\backslash B_\delta(z_{2,k})}\right)
=o(e^{-\frac{C\delta}{\varepsilon_k}})\ \mbox{as}\ k\rightarrow\infty.
$$
Meanwhile, it follows from \eqref{eqn:pre-pohozaev-estimate} that
$$
\frac{\varepsilon_k^4}{2} \int_{B_\delta\left(z_{2, k}\right)} z_{2, k} \cdot \nabla V(x)(\bar u_{2,k}-\bar u_{1,k})\bar \xi_k=o(e^{-\frac{C\delta}{\varepsilon_k}})\ \mbox{as}\ k\rightarrow\infty.
$$
Thus, we conclude that
\begin{equation}\label{eqn:Q-nabla-Q-xi-0=0}
\int_{\mathbb R^3}Q^{1+\frac{8}{3}} \xi_0=\int_{\mathbb R^3}\nabla Q \nabla \xi_0=0
\end{equation}
and
$$
\int_{\mathbb R^3}|x|^p Q\xi_0=0.
$$
Combining \eqref{eqn:Q-nabla-Q-xi-0=0} and the definitions of $b_0, \bar b_0$ (see \eqref{eqn:b-0-bar-b-0-definition}), we deduce that $b_0=\bar b_0=0$.
Then due to the nondegenerate property \eqref{eqn:nondegenerate}, we can infer from the local Pohozaev identity \eqref{eqn:local-Pohozaev} that $b_0=\bar b_0=b_i=0,$ $i=1,2,3$.

{ \bf\emph {Step} 4.} $\xi_0\equiv0$ cannot occur.

Let $x_k$ be a point satisfying $|\xi_k(x_k)|=\|\xi_k(x)\|_{L^{\infty}(\Omega_k)}=1$. Since $\tilde u_{i,k}$ and $Q$ decay exponentially as $|x|\rightarrow\infty$, then applying the maximum principle to \eqref{eqn:xi-k-equation}, we know that $|x_k|\leq C$ uniformly in $k$. By \emph {Step} 1, it results that
$ \xi_k(x)\rightarrow \xi_0(x)\not\equiv0$ in $C_{loc}^1(\mathbb R^3)$ as $k\rightarrow\infty$.
However, from \emph {Step} 3 and  the definition of $\xi_0$ (see \eqref{eqn:xi-0}), we get that $\xi_0\equiv0$. Hence, the desired contradiction is obtained, which indicates that the pre-assumption
$u_{1,k}\not\equiv u_{2,k}$ is false and the proof of Theorem \ref{thm:local-uniqueness} is complete.
\qed
\ \\
\ \\
\noindent\textbf{\Large Acknowledgements}

\noindent {This research is supported by National Natural Science Foundation of China (No.12371120) and Southwest University graduate research innovation project (No. SWUB24031).}
\vspace{0.8em}

\noindent\textbf{\Large Declaration}

\noindent {{\bf Conflict of interest} The authors do not have conflict of interest.}

\bibliographystyle{elsarticle-num}

\end{document}